\theoremstyle{plain}
\newtheorem{athm}{Theorem}
\newtheorem{theorem}{Theorem}[section]
\newtheorem{proposition}[theorem]{Proposition}
\newtheorem{corollary}[theorem]{Corollary}
\newtheorem{lemma}[theorem]{Lemma}
\theoremstyle{definition}
\newtheorem{example}{Example}[section]
\newtheorem{definition}[theorem]{Definition}
\newtheorem{remark}[theorem]{Remark}
\DeclareMathOperator{\esssup}{ess \ sup}
\DeclareMathOperator{\diam}{diam}
\DeclareMathOperator{\ho}{H}
\chardef\@x10\chardef\@xv60
\def\tcitime{
\def\@time{%
  \@minute\time\@hour\@minute\divide\@hour\@xv
  \ifnum\@hour<\@x 0\fi\the\@hour:%
  \multiply\@hour\@xv\advance\@minute-\@hour
  \ifnum\@minute<\@x 0\fi\the\@minute
  }}%
\def\x@hyperref#1#2#3{%
   \catcode`\~ = 12
   \catcode`\$ = 12
   \catcode`\_ = 12
   \catcode`\# = 12
   \catcode`\& = 12
   \y@hyperref{#1}{#2}{#3}%
}
\def\y@hyperref#1#2#3#4{%
   #2\ref{#4}#3
   \catcode`\~ = 13
   \catcode`\$ = 3
   \catcode`\_ = 8
   \catcode`\# = 6
   \catcode`\& = 4
}
\def\QCTOpt[#1]#2{%
  \def\QCTOptB{#1}
  \def\QCTOptA{#2}
}
\def\QCTNOpt#1{%
  \def\QCTOptA{#1}
  \let\QCTOptB\empty
}
\def\Qct{%
  \@ifnextchar[{%
    \QCTOpt}{\QCTNOpt}
}
\def\QCBOpt[#1]#2{%
  \def\QCBOptB{#1}%
  \def\QCBOptA{#2}%
}
\def\QCBNOpt#1{%
  \def\QCBOptA{#1}%
  \let\QCBOptB\empty
}
\def\Qcb{%
  \@ifnextchar[{%
    \QCBOpt}{\QCBNOpt}%
}
\def\PrepCapArgs{%
  \ifx\QCBOptA\empty
    \ifx\QCTOptA\empty
      {}%
    \else
      \ifx\QCTOptB\empty
        {\QCTOptA}%
      \else
        [\QCTOptB]{\QCTOptA}%
      \fi
    \fi
  \else
    \ifx\QCBOptA\empty
      {}%
    \else
      \ifx\QCBOptB\empty
        {\QCBOptA}%
      \else
        [\QCBOptB]{\QCBOptA}%
      \fi
    \fi
  \fi
}
\def\GRAPHICSPS#1{%
 \ifcase\GRAPHICSTYPE
   \special{ps: #1}%
 \or
   \special{language "PS", include "#1"}%
 \fi
}%
\def\graffile#1#2#3#4{%
    \bgroup
	   \@inlabelfalse
       \leavevmode
       \@ifundefined{bbl@deactivate}{\def~{\string~}}{\activesoff}%
        \raise -#4 \BOXTHEFRAME{%
           \hbox to #2{\raise #3\hbox to #2{\null #1\hfil}}}%
    \egroup
}%
\def\draftbox#1#2#3#4{%
 \leavevmode\raise -#4 \hbox{%
  \frame{\rlap{\protect\tiny #1}\hbox to #2%
   {\vrule height#3 width\z@ depth\z@\hfil}%
  }%
 }%
}%
\let\nographics=\@msidraft
\newif\ifwasdraft
\def\GRAPHIC#1#2#3#4#5{%
   \ifnum\@msidraft=\@ne\draftbox{#2}{#3}{#4}{#5}%
   \else\graffile{#1}{#3}{#4}{#5}%
   \fi
}
\def\addtoLaTeXparams#1{%
    \edef\LaTeXparams{\LaTeXparams #1}}%
\newif\ifBoxFrame \BoxFramefalse
\newif\ifOverFrame \OverFramefalse
\newif\ifUnderFrame \UnderFramefalse
\def\BOXTHEFRAME#1{%
   \hbox{%
      \ifBoxFrame
         \frame{#1}%
      \else
         {#1}%
      \fi
   }%
}
\def\doFRAMEparams#1{\BoxFramefalse\OverFramefalse\UnderFramefalse\readFRAMEparams#1\end}%
\def\readFRAMEparams#1{%
 \ifx#1\end%
  \let\next=\relax
  \else
  \ifx#1i\dispkind=\z@\fi
  \ifx#1d\dispkind=\@ne\fi
  \ifx#1f\dispkind=\tw@\fi
  \ifx#1t\addtoLaTeXparams{t}\fi
  \ifx#1b\addtoLaTeXparams{b}\fi
  \ifx#1p\addtoLaTeXparams{p}\fi
  \ifx#1h\addtoLaTeXparams{h}\fi
  \ifx#1X\BoxFrametrue\fi
  \ifx#1O\OverFrametrue\fi
  \ifx#1U\UnderFrametrue\fi
  \ifx#1w
    \ifnum\@msidraft=1\wasdrafttrue\else\wasdraftfalse\fi
    \@msidraft=\@ne
  \fi
  \let\next=\readFRAMEparams
  \fi
 \next
 }%
\def\IFRAME#1#2#3#4#5#6{%
      \bgroup
      \let\QCTOptA\empty
      \let\QCTOptB\empty
      \let\QCBOptA\empty
      \let\QCBOptB\empty
      #6%
      \parindent=0pt
      \leftskip=0pt
      \rightskip=0pt
      \setbox0=\hbox{\QCBOptA}%
      \@tempdima=#1\relax
      \ifOverFrame
          \typeout{This is not implemented yet}%
          \show\HELP
      \else
         \ifdim\wd0>\@tempdima
            \advance\@tempdima by \@tempdima
            \ifdim\wd0 >\@tempdima
               \setbox1 =\vbox{%
                  \unskip\hbox to \@tempdima{\hfill\GRAPHIC{#5}{#4}{#1}{#2}{#3}\hfill}%
                  \unskip\hbox to \@tempdima{\parbox[b]{\@tempdima}{\QCBOptA}}%
               }%
               \wd1=\@tempdima
            \else
               \textwidth=\wd0
               \setbox1 =\vbox{%
                 \noindent\hbox to \wd0{\hfill\GRAPHIC{#5}{#4}{#1}{#2}{#3}\hfill}\\%
                 \noindent\hbox{\QCBOptA}%
               }%
               \wd1=\wd0
            \fi
         \else
            \ifdim\wd0>0pt
              \hsize=\@tempdima
              \setbox1=\vbox{%
                \unskip\GRAPHIC{#5}{#4}{#1}{#2}{0pt}%
                \break
                \unskip\hbox to \@tempdima{\hfill \QCBOptA\hfill}%
              }%
              \wd1=\@tempdima
           \else
              \hsize=\@tempdima
              \setbox1=\vbox{%
                \unskip\GRAPHIC{#5}{#4}{#1}{#2}{0pt}%
              }%
              \wd1=\@tempdima
           \fi
         \fi
         \@tempdimb=\ht1
         \advance\@tempdimb by -#2
         \advance\@tempdimb by #3
         \leavevmode
         \raise -\@tempdimb \hbox{\box1}%
      \fi
      \egroup%
}%
\def\DFRAME#1#2#3#4#5{%
  \hfil\break
  \bgroup
     \leftskip\@flushglue
	 \rightskip\@flushglue
	 \parindent\z@
	 \parfillskip\z@skip
     \let\QCTOptA\empty
     \let\QCTOptB\empty
     \let\QCBOptA\empty
     \let\QCBOptB\empty
	 \vbox\bgroup
        \ifOverFrame 
           #5\QCTOptA\par
        \fi
        \GRAPHIC{#4}{#3}{#1}{#2}{\z@}%
        \ifUnderFrame 
           \break#5\QCBOptA
        \fi
	 \egroup
   \egroup
   \break
}%
\def\FFRAME#1#2#3#4#5#6#7{%
  \@ifundefined{floatstyle}
    {
     \begin{figure}[#1]%
    }
    {
	 \ifx#1h
      \begin{figure}[H]%
	 \else
      \begin{figure}[#1]%
	 \fi
	}
  \let\QCTOptA\empty
  \let\QCTOptB\empty
  \let\QCBOptA\empty
  \let\QCBOptB\empty
  \ifOverFrame
    #4
    \ifx\QCTOptA\empty
    \else
      \ifx\QCTOptB\empty
        \caption{\QCTOptA}%
      \else
        \caption[\QCTOptB]{\QCTOptA}%
      \fi
    \fi
    \ifUnderFrame\else
      \label{#5}%
    \fi
  \else
    \UnderFrametrue%
  \fi
  \begin{center}\GRAPHIC{#7}{#6}{#2}{#3}{\z@}\end{center}%
  \ifUnderFrame
    #4
    \ifx\QCBOptA\empty
      \caption{}%
    \else
      \ifx\QCBOptB\empty
        \caption{\QCBOptA}%
      \else
        \caption[\QCBOptB]{\QCBOptA}%
      \fi
    \fi
    \label{#5}%
  \fi
  \end{figure}%
 }%
\def\makeactives{
  \catcode`\"=\active
  \catcode`\;=\active
  \catcode`\:=\active
  \catcode`\'=\active
  \catcode`\~=\active
}
   \gdef\activesoff{%
      \def"{\string"}%
      \def;{\string;}%
      \def:{\string:}%
      \def'{\string'}%
      \def~{\string~}%
    }
\def\FRAME#1#2#3#4#5#6#7#8{%
 \bgroup
 \ifnum\@msidraft=\@ne
   \wasdrafttrue
 \else
   \wasdraftfalse%
 \fi
 \def\LaTeXparams{}%
 \dispkind=\z@
 \def\LaTeXparams{}%
 \doFRAMEparams{#1}%
 \ifnum\dispkind=\z@\IFRAME{#2}{#3}{#4}{#7}{#8}{#5}\else
  \ifnum\dispkind=\@ne\DFRAME{#2}{#3}{#7}{#8}{#5}\else
   \ifnum\dispkind=\tw@
    \edef\@tempa{\noexpand\FFRAME{\LaTeXparams}}%
    \@tempa{#2}{#3}{#5}{#6}{#7}{#8}%
    \fi
   \fi
  \fi
  \ifwasdraft\@msidraft=1\else\@msidraft=0\fi{}%
  \egroup
 }%
\def\TEXUX#1{"texux"}
\def\func#1{\mathop{\rm #1}\nolimits}%
\long\def\QQQ#1#2{%
     \long\expandafter\def\csname#1\endcsname{#2}}%
\long\def\QQA#1#2{}%
\def\QTR#1#2{{\csname#1\endcsname {#2}}}%
\def\EXPAND#1[#2]#3{}%
\def\NOEXPAND#1[#2]#3{}%
\def\LaTeXparent#1{}%
\def\ChildStyles#1{}%
\def\ChildDefaults#1{}%
\def\QTagDef#1#2#3{}%
  \providecommand{\UNICODE}[2][]{\protect\rule{.1in}{.1in}}
  \providecommand{\U}[1]{\protect\rule{.1in}{.1in}}
\def\QQfnmark#1{\footnotemark}
 \def\abstract{%
  \if@twocolumn
   \section*{Abstract (Not appropriate in this style!)}%
   \else \small 
   \begin{center}{\bf Abstract\vspace{-.5em}\vspace{\z@}}\end{center}%
   \quotation 
   \fi
  }%
   \def\registered{\relax\ifmmode{}\r@gistered
                    \else$\m@th\r@gistered$\fi}%
 \def\r@gistered{^{\ooalign
  {\hfil\raise.07ex\hbox{$\scriptstyle\rm\text{R}$}\hfil\crcr
  \mathhexbox20D}}}}{}%
\newdimen\theight
\def\newfmtname{LaTeX2e}
  \DeclareOldFontCommand{\rm}{\normalfont\rmfamily}{\mathrm}
  \DeclareOldFontCommand{\sf}{\normalfont\sffamily}{\mathsf}
  \DeclareOldFontCommand{\tt}{\normalfont\ttfamily}{\mathtt}
  \DeclareOldFontCommand{\bf}{\normalfont\bfseries}{\mathbf}
  \DeclareOldFontCommand{\it}{\normalfont\itshape}{\mathit}
  \DeclareOldFontCommand{\sl}{\normalfont\slshape}{\@nomath\sl}
  \DeclareOldFontCommand{\sc}{\normalfont\scshape}{\@nomath\sc}
\def\alpha{{\Greekmath 010B}}%
\def\beta{{\Greekmath 010C}}%
\def\gamma{{\Greekmath 010D}}%
\def\delta{{\Greekmath 010E}}%
\def\epsilon{{\Greekmath 010F}}%
\def\zeta{{\Greekmath 0110}}%
\def\eta{{\Greekmath 0111}}%
\def\theta{{\Greekmath 0112}}%
\def\iota{{\Greekmath 0113}}%
\def\kappa{{\Greekmath 0114}}%
\def\lambda{{\Greekmath 0115}}%
\def\mu{{\Greekmath 0116}}%
\def\nu{{\Greekmath 0117}}%
\def\xi{{\Greekmath 0118}}%
\def\pi{{\Greekmath 0119}}%
\def\rho{{\Greekmath 011A}}%
\def\sigma{{\Greekmath 011B}}%
\def\tau{{\Greekmath 011C}}%
\def\upsilon{{\Greekmath 011D}}%
\def\phi{{\Greekmath 011E}}%
\def\chi{{\Greekmath 011F}}%
\def\psi{{\Greekmath 0120}}%
\def\omega{{\Greekmath 0121}}%
\def\varepsilon{{\Greekmath 0122}}%
\def\vartheta{{\Greekmath 0123}}%
\def\varpi{{\Greekmath 0124}}%
\def\varrho{{\Greekmath 0125}}%
\def\varsigma{{\Greekmath 0126}}%
\def\varphi{{\Greekmath 0127}}%
\def\nabla{{\Greekmath 0272}}
\def\FindBoldGroup{%
   {\setbox0=\hbox{$\mathbf{x\global\edef\theboldgroup{\the\mathgroup}}$}}%
}
\def\Greekmath#1#2#3#4{%
    \if@compatibility
        \ifnum\mathgroup=\symbold
           \mathchoice{\mbox{\boldmath$\displaystyle\mathchar"#1#2#3#4$}}%
                      {\mbox{\boldmath$\textstyle\mathchar"#1#2#3#4$}}%
                      {\mbox{\boldmath$\scriptstyle\mathchar"#1#2#3#4$}}%
                      {\mbox{\boldmath$\scriptscriptstyle\mathchar"#1#2#3#4$}}%
        \else
           \mathchar"#1#2#3#4%
        \fi 
    \else 
        \FindBoldGroup
        \ifnum\mathgroup=\theboldgroup 
           \mathchoice{\mbox{\boldmath$\displaystyle\mathchar"#1#2#3#4$}}%
                      {\mbox{\boldmath$\textstyle\mathchar"#1#2#3#4$}}%
                      {\mbox{\boldmath$\scriptstyle\mathchar"#1#2#3#4$}}%
                      {\mbox{\boldmath$\scriptscriptstyle\mathchar"#1#2#3#4$}}%
        \else
           \mathchar"#1#2#3#4%
        \fi     	    
	  \fi}
\newif\ifGreekBold  \GreekBoldfalse
\let\SAVEPBF=\pbf
\def\pbf{\GreekBoldtrue\SAVEPBF}%
  \newcounter{equationnumber}  
  \def\mathletters{%
     \addtocounter{equation}{1}
     \edef\@currentlabel{\theequation}%
     \setcounter{equationnumber}{\c@equation}
     \setcounter{equation}{0}%
     \edef\theequation{\@currentlabel\noexpand\alph{equation}}%
  }
    \def\BibTeX{{\rm B\kern-.05em{\sc i\kern-.025em b}\kern-.08em
                 T\kern-.1667em\lower.7ex\hbox{E}\kern-.125emX}}}{}%
\def\AmS{{\protect\usefont{OMS}{cmsy}{m}{n}%
                A\kern-.1667em\lower.5ex\hbox{M}\kern-.125emS}}}{}%
\def\@@eqncr{\let\@tempa\relax
    \ifcase\@eqcnt \def\@tempa{& & &}\or \def\@tempa{& &}%
      \else \def\@tempa{&}\fi
     \@tempa
     \if@eqnsw
        \iftag@
           \@taggnum
        \else
           \@eqnnum\stepcounter{equation}%
        \fi
     \fi
     \global\tag@false
     \global\@eqnswtrue
     \global\@eqcnt\z@\cr}
\def\TCItag{\@ifnextchar*{\@TCItagstar}{\@TCItag}}
\def\@TCItag#1{%
    \global\tag@true
    \global\def\@taggnum{(#1)}}
\def\@TCItagstar*#1{%
    \global\tag@true
    \global\def\@taggnum{#1}}
\def\ExitTCILatex{\makeatother }
\let\DOTSI\relax
\def\RIfM@{\relax\ifmmode}%
\def\FN@{\futurelet\next}%
\def\iint{\DOTSI\intno@\tw@\FN@\ints@}%
\def\iiint{\DOTSI\intno@\thr@@\FN@\ints@}%
\def\iiiint{\DOTSI\intno@4 \FN@\ints@}%
\def\idotsint{\DOTSI\intno@\z@\FN@\ints@}%
\def\ints@{\findlimits@\ints@@}%
\newif\iflimtoken@
\newif\iflimits@
\def\findlimits@{\limtoken@true\ifx\next\limits\limits@true
 \else\ifx\next\nolimits\limits@false\else
 \limtoken@false\ifx\ilimits@\nolimits\limits@false\else
 \ifinner\limits@false\else\limits@true\fi\fi\fi\fi}%
\def\multint@{\int\ifnum\intno@=\z@\intdots@                          
 \else\intkern@\fi                                                    
 \ifnum\intno@>\tw@\int\intkern@\fi                                   
 \ifnum\intno@>\thr@@\int\intkern@\fi                                 
 \int}
\def\multintlimits@{\intop\ifnum\intno@=\z@\intdots@\else\intkern@\fi
 \ifnum\intno@>\tw@\intop\intkern@\fi
 \ifnum\intno@>\thr@@\intop\intkern@\fi\intop}%
\def\intic@{%
    \mathchoice{\hskip.5em}{\hskip.4em}{\hskip.4em}{\hskip.4em}}%
\def\negintic@{\mathchoice
 {\hskip-.5em}{\hskip-.4em}{\hskip-.4em}{\hskip-.4em}}%
\def\ints@@{\iflimtoken@                                              
 \def\ints@@@{\iflimits@\negintic@
   \mathop{\intic@\multintlimits@}\limits                             
  \else\multint@\nolimits\fi                                          
  \eat@}
 \else                                                                
 \def\ints@@@{\iflimits@\negintic@
  \mathop{\intic@\multintlimits@}\limits\else
  \multint@\nolimits\fi}\fi\ints@@@}%
\def\intkern@{\mathchoice{\!\!\!}{\!\!}{\!\!}{\!\!}}%
\def\plaincdots@{\mathinner{\cdotp\cdotp\cdotp}}%
\def\intdots@{\mathchoice{\plaincdots@}%
 {{\cdotp}\mkern1.5mu{\cdotp}\mkern1.5mu{\cdotp}}%
 {{\cdotp}\mkern1mu{\cdotp}\mkern1mu{\cdotp}}%
 {{\cdotp}\mkern1mu{\cdotp}\mkern1mu{\cdotp}}}%
\def\RIfM@{\relax\protect\ifmmode}
\def\text{\RIfM@\expandafter\text@\else\expandafter\mbox\fi}
\let\nfss@text\text
\def\text@#1{\mathchoice
   {\textdef@\displaystyle\f@size{#1}}%
   {\textdef@\textstyle\tf@size{\firstchoice@false #1}}%
   {\textdef@\textstyle\sf@size{\firstchoice@false #1}}%
   {\textdef@\textstyle \ssf@size{\firstchoice@false #1}}%
   \glb@settings}
\def\textdef@#1#2#3{\hbox{{%
                    \everymath{#1}%
                    \let\f@size#2\selectfont
                    #3}}}
\newif\iffirstchoice@
\def\Let@{\relax\iffalse{\fi\let\\=\cr\iffalse}\fi}%
\def\vspace@{\def\vspace##1{\crcr\noalign{\vskip##1\relax}}}%
\def\multilimits@{\bgroup\vspace@\Let@
 \baselineskip\fontdimen10 \scriptfont\tw@
 \advance\baselineskip\fontdimen12 \scriptfont\tw@
 \lineskip\thr@@\fontdimen8 \scriptfont\thr@@
 \lineskiplimit\lineskip
 \vbox\bgroup\ialign\bgroup\hfil$\m@th\scriptstyle{##}$\hfil\crcr}%
\def\Sb{_\multilimits@}%
\def\endSb{\crcr\egroup\egroup\egroup}%
\def\Sp{^\multilimits@}%
\newdimen\ex@
\def\rightarrowfill@#1{$#1\m@th\mathord-\mkern-6mu\cleaders
 \hbox{$#1\mkern-2mu\mathord-\mkern-2mu$}\hfill
 \mkern-6mu\mathord\rightarrow$}%
\def\leftarrowfill@#1{$#1\m@th\mathord\leftarrow\mkern-6mu\cleaders
 \hbox{$#1\mkern-2mu\mathord-\mkern-2mu$}\hfill\mkern-6mu\mathord-$}%
\def\leftrightarrowfill@#1{$#1\m@th\mathord\leftarrow
\mkern-6mu\cleaders
 \hbox{$#1\mkern-2mu\mathord-\mkern-2mu$}\hfill
 \mkern-6mu\mathord\rightarrow$}%
\def\overrightarrow{\mathpalette\overrightarrow@}%
\def\overrightarrow@#1#2{\vbox{\ialign{##\crcr\rightarrowfill@#1\crcr
 \noalign{\kern-\ex@\nointerlineskip}$\m@th\hfil#1#2\hfil$\crcr}}}%
\def\overleftarrow{\mathpalette\overleftarrow@}%
\def\overleftarrow@#1#2{\vbox{\ialign{##\crcr\leftarrowfill@#1\crcr
 \noalign{\kern-\ex@\nointerlineskip}$\m@th\hfil#1#2\hfil$\crcr}}}%
\def\overleftrightarrow{\mathpalette\overleftrightarrow@}%
\def\overleftrightarrow@#1#2{\vbox{\ialign{##\crcr
   \leftrightarrowfill@#1\crcr
 \noalign{\kern-\ex@\nointerlineskip}$\m@th\hfil#1#2\hfil$\crcr}}}%
\def\underrightarrow{\mathpalette\underrightarrow@}%
\def\underrightarrow@#1#2{\vtop{\ialign{##\crcr$\m@th\hfil#1#2\hfil
  $\crcr\noalign{\nointerlineskip}\rightarrowfill@#1\crcr}}}%
\def\underleftarrow{\mathpalette\underleftarrow@}%
\def\underleftarrow@#1#2{\vtop{\ialign{##\crcr$\m@th\hfil#1#2\hfil
  $\crcr\noalign{\nointerlineskip}\leftarrowfill@#1\crcr}}}%
\def\underleftrightarrow{\mathpalette\underleftrightarrow@}%
\def\underleftrightarrow@#1#2{\vtop{\ialign{##\crcr$\m@th
  \hfil#1#2\hfil$\crcr
 \noalign{\nointerlineskip}\leftrightarrowfill@#1\crcr}}}%
\def\qopnamewl@#1{\mathop{\operator@font#1}\nlimits@}
\let\nlimits@\displaylimits
\def\setboxz@h{\setbox\z@\hbox}
\def\varlim@#1#2{\mathop{\vtop{\ialign{##\crcr
 \hfil$#1\m@th\operator@font lim$\hfil\crcr
 \noalign{\nointerlineskip}#2#1\crcr
 \noalign{\nointerlineskip\kern-\ex@}\crcr}}}}
 \def\rightarrowfill@#1{\m@th\setboxz@h{$#1-$}\ht\z@\z@
  $#1\copy\z@\mkern-6mu\cleaders
  \hbox{$#1\mkern-2mu\box\z@\mkern-2mu$}\hfill
  \mkern-6mu\mathord\rightarrow$}
\def\leftarrowfill@#1{\m@th\setboxz@h{$#1-$}\ht\z@\z@
  $#1\mathord\leftarrow\mkern-6mu\cleaders
  \hbox{$#1\mkern-2mu\copy\z@\mkern-2mu$}\hfill
  \mkern-6mu\box\z@$}
\def\projlim{\qopnamewl@{proj\,lim}}
\def\injlim{\qopnamewl@{inj\,lim}}
\def\varinjlim{\mathpalette\varlim@\rightarrowfill@}
\def\varprojlim{\mathpalette\varlim@\leftarrowfill@}
\def\varliminf{\mathpalette\varliminf@{}}
\def\varliminf@#1{\mathop{\underline{\vrule\@depth.2\ex@\@width\z@
   \hbox{$#1\m@th\operator@font lim$}}}}
\def\varlimsup{\mathpalette\varlimsup@{}}
\def\varlimsup@#1{\mathop{\overline
  {\hbox{$#1\m@th\operator@font lim$}}}}
\def\align{\@verbatim \frenchspacing\@vobeyspaces \@alignverbatim
You are using the "align" environment in a style in which it is not defined.}
\let\csname endalign*\endcsname =\endtrivlist
\def\alignat{\@verbatim \frenchspacing\@vobeyspaces \@alignatverbatim
You are using the "alignat" environment in a style in which it is not defined.}
\let\csname endalignat*\endcsname =\endtrivlist
\def\xalignat{\@verbatim \frenchspacing\@vobeyspaces \@xalignatverbatim
You are using the "xalignat" environment in a style in which it is not defined.}
\let\csname endxalignat*\endcsname =\endtrivlist
\def\gather{\@verbatim \frenchspacing\@vobeyspaces \@gatherverbatim
You are using the "gather" environment in a style in which it is not defined.}
\let\csname endgather*\endcsname =\endtrivlist
\def\multiline{\@verbatim \frenchspacing\@vobeyspaces \@multilineverbatim
You are using the "multiline" environment in a style in which it is not defined.}
\let\csname endmultiline*\endcsname =\endtrivlist
\def\arrax{\@verbatim \frenchspacing\@vobeyspaces \@arraxverbatim
You are using a type of "array" construct that is only allowed in AmS-LaTeX.}
\def\tabulax{\@verbatim \frenchspacing\@vobeyspaces \@tabulaxverbatim
You are using a type of "tabular" construct that is only allowed in AmS-LaTeX.}
\let\csname endarrax*\endcsname =\endtrivlist
\let\csname endtabulax*\endcsname =\endtrivlist
 \def\endequation{%
     \ifmmode\ifinner 
      \iftag@
        \addtocounter{equation}{-1} 
        $\hfil
           \displaywidth\linewidth\@taggnum\egroup \endtrivlist
        \global\tag@false
        \global\@ignoretrue   
      \else
        $\hfil
           \displaywidth\linewidth\@eqnnum\egroup \endtrivlist
        \global\tag@false
        \global\@ignoretrue 
      \fi
     \else   
      \iftag@
        \addtocounter{equation}{-1} 
        \eqno \hbox{\@taggnum}
        \global\tag@false%
        $$\global\@ignoretrue
      \else
        \eqno \hbox{\@eqnnum}
        $$\global\@ignoretrue
      \fi
     \fi\fi
 } 
 \newif\iftag@ \tag@false
 \def\TCItag{\@ifnextchar*{\@TCItagstar}{\@TCItag}}
 \def\@TCItag#1{%
     \global\tag@true
     \global\def\@taggnum{(#1)}}
 \def\@TCItagstar*#1{%
     \global\tag@true
     \global\def\@taggnum{#1}}
     \def\tag{\@ifnextchar*{\@tagstar}{\@tag}}
     \def\@tag#1{%
         \global\tag@true
         \global\def\@taggnum{(#1)}}
     \def\@tagstar*#1{%
         \global\tag@true
         \global\def\@taggnum{#1}}
\def\dfrac#1#2{{\displaystyle {#1 \over #2}}}%
\begin{document}

\begin{abstract}
	We consider a class of endomorphisms which contains a set of piecewise partially hyperbolic skew-products with a  non-uniformly expanding base map. The aimed transformation preserves a foliation which is almost everywhere uniformly contracted with possible discontinuity sets, which are parallel to the contracting direction. We
prove that the associated transfer operator, acting on suitable anisotropic normed spaces,
has a spectral gap (on which we have quantitative estimation) and the disintegration of the unique invariant physical measure, along the stable leaves, is $\zeta$-H\"older. We use this fact to obtain exponential decay of correlations on the set of $\zeta$-H\"older functions.   
\end{abstract}

\title[Decay of Correlations and H\"older Regularity for Skew Products]{H\"older Regularity and Exponential Decay of Correlations for a class of Piecewise Partially Hyperbolic Maps.}
\author[Rafael A. Bilbao]{Rafael A. Bilbao}
\author[Ricardo Bioni]{Ricardo Bioni}
\author[Rafael Lucena]{Rafael Lucena}

\date{\today }
\keywords{Spectral Gap, Statistical Properties, Transfer
Operator.}

\address[Rafael A. Bilbao]{Universidad Pedag\'ogica y Tecnol\'ogica de Colombia, Avenida Central del Norte 39-115, Sede Central Tunja, Boyac\'a, 150003, Colombia.}
\email{rafael.alvarez@uptc.edu.co}

\address[Ricardo Bioni]{Rua Costa Bastos, 34, Santa Teresa, Rio de Janeiro-Brasil}
\email{ricardo.bioni@hotmail.com}

\address[Rafael Lucena]{Universidade Federal de Alagoas - UFAL, Av. Lourival Melo Mota, S/N
	Tabuleiro do Martins, Maceio - AL, 57072-900, Brasil}
\email{rafael.lucena@im.ufal.br}
\urladdr{www.im.ufal.br/professor/rafaellucena}
\maketitle


\section{Introduction}

In this paper, we take advantage of the new ideas on the construction of anisotropic spaces, introduced by \cite{GLu}, to study the behaviour of the transfer operator associated to maps $F:\Sigma \longrightarrow \Sigma$ which has partially hyperbolic skew product structure with a non-uniformly expanding quotient map $f: M \longrightarrow M$. In other words, we prove spectral gap for the transfer operator associated to $F$ and explore some consequences. For instance, we prove that this sort of system has a physical measure which admits a H\"older regular disintegration along the stable foliation. Here, this is enough to have some other statistical properties like exponential decay of correlations on the set of $\zeta$-H\"older functions.

Usually, this sort of study is carried out as an application of the Ionescu-Tulcea and Marinescu's Theorem, by constructing a pair of suitable spaces of functions, a stronger and an auxiliary weaker space, such that the action of the Perron-Frobenius operator on the stronger space has spectral gap (see \cite{Ba}, \cite{L2}, \cite{BG}, \cite{G} and \cite{V} for some introductory texts).

In recent years, the technique of finding good anisotropic norms was extended to piecewise hyperbolic systems (see e.g. \cite%
{BT}, \cite{BaG}, \cite{BaG2}, \cite{DL}, \cite{GL} and \cite{B},\cite{D} for recent papers containing a survey of the topic). From these properties, several limit theorems or stability statements can be deduced. In these approaches, the existence of an expanding direction, enough regularity of the system or transversality between the map's singular set and the contracting directions is required, which make them not suitable for our situation.

Despite not giving explicit bounds on the rate of decay of correlations, the application of the ITM's Theorem has shown to be fruitful, especially for the study of piecewise expanding maps (see \cite{RE}, for instance). Alternatively, \cite{LV2} took advantage of Garrett Birkhoff's idea, and by defining a Hilbert metric on a cone of functions restricted to which the \textbf{PF} operator maps the cone strictly inside itself, explicit bounds on the rate of decay of correlations was obtained for a class of non-Markov piecewise expanding systems on the interval.

The Hilbert metric's idea has shown to be powerful and became quite standard. Moreover, its application bypasses the boundary of the expanding maps. An important application of this technique was given in \cite{VM} and \cite{VAC} in the analysis of the spectrum of the Ruelle-Perron-Frobenius operator, in order to study the Thermodynamic Formalism for a sort of mostly expanding maps. They prove, among other results, statements on exponential decay of correlations, central limit theorems and stability. Another application of the projective metric's technique was given in \cite{TC}. That work deals with diffeomorphisms onto its image and proves exponential decay of correlations for H\"older
continuous observables and central limit theorem for the maximal entropy
probability measure. 

In this sense, since the maps presented here are not too regular in horizontal direction and maybe not invertible, the results reached in the present work generalize the ones given in \cite{TC}. Here, we deal with endomorphisms $F$ with discontinuity sets (if non-empty) parallel to the stable direction. We study the action of the transfer operator of $F:M\times K \longrightarrow M \times K$ on a suitable normed space of signed measures. These vector spaces are constructed by identifying a measure on the square, $M\times K$, with a path of measures $M\longmapsto SM(K)$ ($SM(K)$ denotes the space of signed measures on $K$), where $SM(K)$ is endowed with the ``dual of H\"older" norm.  This is achieved by generalizing the Wasserstein-Kantorovich-like norm defined in \cite{GLu} (see also \cite{ben} for similar applications of the Wasserstein distance to obtain limit theorems) and this is enough to obtain the limit theorems on a larger Banach space of functions as an application of Theorem \ref{regg}, where we prove that the disintegration of the physical $F$-invariant measure is $\zeta$-H\"older regular. This sort of result has many other applications. For example, the reader can see \cite{GLu} and \cite{Gjep}, where a stability result was proved for Lorenz-like systems and partially hyperbolic skew products, respectively, under \textit{ad-hoc} perturbations of the system.

\noindent\textbf{Statements of the Main Results.} Here we expose the main results of this work. We note that, the hypothesis (f1), (f2), (f3) and (G1) on the system $F$ will be stated in section \ref{sec1} and (G2) in section \ref{hhhhhhhh}.

The first result guaranties existence and uniqueness for the $F$-invariant measure in the space $S^{\infty}$ (see equation (\ref{sinfi})). Its proof is given in section \ref{invt}.    

\begin{athm}
	\label{belongss} There exists a unique $F$-invariant probability, $\mu_0 \in S^{\infty }$. Moreover, $\pi_{x*}\mu_0 = m_1$.
\end{athm}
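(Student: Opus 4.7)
The plan is to construct $\mu_0$ by a Krylov--Bogolyubov averaging scheme starting from a carefully chosen reference measure, and to obtain uniqueness by exploiting the spectral properties of the transfer operator on $S^\infty$. Write $\mathcal{F}$ for the transfer operator associated with $F$ and $\mathcal{L}_f$ for that of the base map $f$. The skew-product structure $F(x,y)=(f(x),G(x,y))$ gives the intertwining identity $\pi_{x*}\circ\mathcal{F}=\mathcal{L}_f\circ\pi_{x*}$, which will drive both the projection statement and the selection of a good starting measure.

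First, I would pick a reference measure $\mu\in S^\infty$ whose $x$-marginal already equals the non-uniformly expanding invariant measure $m_1$, e.g.\ $\mu=m_1\otimes\nu$ for a suitable probability $\nu$ on $K$; hypothesis (G1) ensures this lies in $S^\infty$. Because $\pi_{x*}\mathcal{F}^k\mu=\mathcal{L}_f^k m_1=m_1$ for every $k\geq 0$, the Cesàro averages
\[
\mu_n := \frac{1}{n}\sum_{k=0}^{n-1}\mathcal{F}^k\mu
\]
all satisfy $\pi_{x*}\mu_n=m_1$. To extract a limit, I would invoke the Lasota--Yorke-style estimate that the paper develops for $\mathcal{F}$ on $S^\infty$ (itself a consequence of uniform contraction along stable leaves and regularity hypotheses (f1)--(f3), (G1)), giving a uniform bound $\|\mathcal{F}^k\mu\|_{S^\infty}\leq C$, hence $\|\mu_n\|_{S^\infty}\leq C$. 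Compact embedding of $S^\infty$-bounded sets into a weaker ambient space of signed measures (standard for Wasserstein--Kantorovich-like norms, in the spirit of \cite{GLu}) yields a subsequence $\mu_{n_j}\to\mu_0$. The telescoping identity $\mathcal{F}\mu_{n_j}-\mu_{n_j}=(\mathcal{F}^{n_j}\mu-\mu)/n_j\to 0$ gives $F$-invariance of $\mu_0$; lower semicontinuity of $\|\cdot\|_{S^\infty}$ places $\mu_0$ in $S^\infty$; and passing to the limit in $\pi_{x*}\mu_{n_j}=m_1$ proves $\pi_{x*}\mu_0=m_1$.

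For uniqueness, let $\mu_0,\mu_0'\in S^\infty$ be two $F$-invariant probabilities and set $\eta=\mu_0-\mu_0'$, a zero-mass fixed point of $\mathcal{F}$ in $S^\infty$. The cleanest route is to apply the spectral gap of $\mathcal{F}$ on $S^\infty$ restricted to the zero-average invariant subspace, which immediately forces $\|\mathcal{F}^n\eta\|_{S^\infty}\to 0$ and thus $\eta=0$. If uniqueness must be proved before the full spectral gap is available, I would instead argue by disintegration: the projection argument above, applied to any $F$-invariant $\mu\in S^\infty$, forces $\pi_{x*}\mu=m_1$, so both $\mu_0$ and $\mu_0'$ disintegrate over $(M,m_1)$; uniform contraction of $F$ along the stable fibres (almost everywhere) then forces their conditional measures on fibres to coincide $m_1$-a.e., a ``coupling'' argument whose quantitative version is precisely what the anisotropic norm encodes.

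The principal obstacle I expect is the combination underlying both halves of the proof: first, establishing the uniform $S^\infty$ bound on $\{\mathcal{F}^n\mu\}$ despite the discontinuities of $F$ parallel to the stable direction and the non-uniform expansion of the base; and second, ruling out ``parasitic'' $F$-invariant elements of $S^\infty$ whose $x$-marginals might be other $f$-invariant measures. The latter point is delicate because $f$ is only non-uniformly expanding (so there can be many $f$-invariant probabilities a priori), and it is the regularity built into the $S^\infty$ norm in the $x$-direction---forcing a kind of absolute continuity with respect to $m_1$---that must be used to exclude such measures.
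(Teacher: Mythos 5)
Your plan diverges from the paper's in both halves and leans on machinery that is either unavailable here or unnecessary.

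For existence and the marginal claim, the paper does not run a Krylov--Bogolyubov averaging scheme. It invokes Lemma \ref{kjdhkskjfkjskdjf}, which uses the uniform fibre contraction in (G1) to show that $\inf_{\gamma\times K}\psi\circ F^n$ and $\sup_{\gamma\times K}\psi\circ F^n$ integrate against $m_1$ to the same limit for every continuous $\psi$, producing a unique $\mu_0$ with $\pi_{x*}\mu_0=m_1$. Your Ces\`aro route could work in principle, but the step where you ``extract a limit inside $S^\infty$'' via compact embedding is exactly what the authors flag as unavailable: in Section \ref{ierutierutiuyerit} they explicitly note there is no compact immersion of the strong space into the weak one, which is why they cannot use the Hennion or Ionescu-Tulcea--Marinescu route. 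If instead you mean weak-$*$ compactness of probabilities (Banach--Alaoglu), that gives a weak-$*$ limit, but lower semicontinuity of $\|\cdot\|_{S^\infty}$ along weak-$*$ limits is not automatic --- the norm is built out of essential suprema of Wasserstein norms of \emph{disintegrations}, and disintegration does not commute with weak-$*$ limits --- so this step would require its own argument. More to the point, this hard step is unnecessary: once you know $\mu_0$ is a probability with $\pi_{x*}\mu_0=m_1$, membership in $S^\infty$ is automatic. Each conditional $\mu_0|_\gamma$ is a probability on $K$, hence $\|\mu_0|_\gamma\|_W=1$ for $m_1$-a.e.\ $\gamma$, so $\mu_0\in\mathcal{L}^\infty$; and $d\pi_{x*}\mu_0/dm_1\equiv 1$ is constant, hence trivially $\zeta$-H\"older, so $\mu_0\in S^\infty$. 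No uniform Lasota--Yorke bound on $\{\func{F}_*^k\mu\}$ is needed.

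For uniqueness, your ``cleanest route'' (spectral gap on the zero-average subspace) is circular in the paper's logical structure: the proof of Theorem \ref{spgapp} \emph{uses} Theorem \ref{belongss} (it needs $\mu_0$ to define the rank-one projection $\func{P}$). What is available at this point --- and what the paper actually invokes --- is Proposition \ref{5.8} (exponential convergence to equilibrium for zero-average measures), which is proved before Theorem \ref{belongss} and suffices: for two invariant probabilities $\mu_0,\mu_0'\in S^\infty$, the difference is a zero-average fixed point of $\func{F}_*$, so $\|\mu_0-\mu_0'\|_\infty=\|\func{F}_*^n(\mu_0-\mu_0')\|_\infty\le D_2\beta_1^n\|\mu_0-\mu_0'\|_{S^\infty}\to 0$. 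Your fallback coupling argument is the right intuition, but Proposition \ref{5.8} is precisely the quantitative version you would need to write out.
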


Next result, where the proof can be found in section \ref{ierutierutiuyerit}, shows that the transfer operator acting on the space $S^{\infty}$ is quasi-compact. This sort of result has many consequences for the dynamic and it implies several limit theorems. For instance, we obtain an exponential rate of convergence for the limit $$\lim_{n \to \infty} {C_n(f,g)}=0,$$ where $$C_n(f,g):=\left| \int{(g \circ F^n )  f}d\mu_0 - \int{g  }d\mu_0 \int{f  }d\mu_0 \right|,$$ $g: M \times K \longrightarrow \mathbb{R} $ is a $\zeta$-Holder function and $f \in \Theta _{\mu _0}$. The set $\Theta _{\mu _0}$ is defined as 

\begin{equation*}
\Theta _{\mu _0}:= \{ f: M \times K \longrightarrow \mathbb{R}; f\mu_0 \in S^\infty\},
\end{equation*}where the measure $f\mu_0$ is defined by $f\mu_0(E):=\int _E{f}d\mu_0$, for all measurable set $E$.

\begin{athm}[Spectral gap on $S^{\infty}$]
	\label{spgapp}If $F:\Sigma \longrightarrow \Sigma$ satisfies (f1), (f2), (f3) and (G1)
	given at beginning of section \ref{sec2}, then the operator $\func{F}_{\ast
	}:S^{\infty}\longrightarrow S^{\infty}$ can be written as 
	\begin{equation*}
	\func{F}_{\ast }=\func{P}+\func{N},
	\end{equation*}%
	where
	
		\begin{enumerate}
		\item[a)] $\func{P}$ is a projection, i.e., $\func{P} ^2 = \func{P}$ and $\dim
		\func{P}(S^{\infty})=1$;
		
		\item[b)] there are $0<\xi <1$ and $K>0$ such that $\forall \mu \in S^\infty$ 
		\begin{equation*}
		||\func{N}^{n}(\mu )||_{S^{\infty}}\leq ||\mu||_{S^{\infty}} \xi ^{n}K;
		\end{equation*}
		
		\item[c)] $\func{P}\func{N}=\func{N}\func{P}=0$.
	\end{enumerate}
\end{athm}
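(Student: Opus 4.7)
The strategy is the classical two-norm decomposition scheme (\emph{\`a la} Ionescu-Tulcea--Marinescu / Hennion) adapted to the anisotropic space $S^{\infty}$, combined with the uniqueness statement from Theorem~A to pin down the peripheral spectrum and to identify the one-dimensional fixed subspace.

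First, I would introduce an auxiliary weaker norm $\|\cdot\|_{w}$ of Wasserstein--Kantorovich type, obtained by viewing a signed measure on $\Sigma=M\times K$ as an $SM(K)$-valued path over $M$ and pairing against $\zeta$-H\"older test functions on the fibers; this is the natural weak companion of $\|\cdot\|_{S^{\infty}}$. One first verifies that $\|\cdot\|_{w}\leq C\|\cdot\|_{S^{\infty}}$ and that $\func{F}_{\ast }$ is a (weak) contraction in $\|\cdot\|_{w}$, where the fiberwise uniform contraction of $F$ together with the identity $\pi_{x\ast}\mu_{0}=m_{1}$ from Theorem~A do most of the work.

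The heart of the argument is a Lasota--Yorke inequality for some iterate $n_{0}\geq 1$,
\begin{equation*}
\|\func{F}_{\ast}^{n_{0}}\mu\|_{S^{\infty}} \leq \lambda\,\|\mu\|_{S^{\infty}} + B\,\|\mu\|_{w},\qquad \mu\in S^{\infty},
\end{equation*}
with some $\lambda\in(0,1)$ and $B>0$. The contraction factor $\lambda$ is fed by two sources: (i) the fiberwise uniform contraction, which damps the H\"older component of the disintegration at a definite geometric rate at each iterate; and (ii) a contribution from the base $f$, which, under (f1)--(f3), is only \emph{non-uniformly} expanding and hence controllable only after sufficiently many iterates. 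This last point is, to my mind, the main obstacle: in the absence of uniform expansion one has to choose $n_{0}$ large enough that a positive proportion of orbits has accumulated enough hyperbolic time, and this is precisely where the tail/large-deviation content of hypothesis (G1) must be invoked to turn the estimate into a genuine contraction in the $S^{\infty}$-norm. Once the constants are explicit, the quantitative rate $\xi$ in the statement follows by iteration.

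Next I would observe that the unit ball of $(S^{\infty},\|\cdot\|_{S^{\infty}})$ is relatively compact in $\|\cdot\|_{w}$, via a standard Arzel\`a--Ascoli type argument: uniform $\zeta$-H\"older regularity of the fiberwise disintegrations combined with a uniform bound on total variation forces convergent subsequences in the weak norm. Together with the Lasota--Yorke inequality, Hennion's theorem then yields quasi-compactness of $\func{F}_{\ast}$ on $S^{\infty}$ with essential spectral radius at most $\lambda^{1/n_{0}}$. To finish, set $\func{P}\mu:=\mu(\Sigma)\mu_{0}$ and $\func{N}:=\func{F}_{\ast}-\func{P}$: since $\func{F}_{\ast}$ preserves total mass and fixes $\mu_{0}$ (Theorem~A), one checks immediately that $\func{P}^{2}=\func{P}$, that $\func{P}\func{F}_{\ast}=\func{F}_{\ast}\func{P}=\func{P}$, whence $\func{P}\func{N}=\func{N}\func{P}=0$ and $\func{N}^{n}=\func{F}_{\ast}^{n}-\func{P}$. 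The zero-mass subspace $V=\{\mu\in S^{\infty}:\mu(\Sigma)=0\}$ is $\func{F}_{\ast}$-invariant, and the bound $\|\func{N}^{n}\mu\|_{S^{\infty}}\leq K\xi^{n}\|\mu\|_{S^{\infty}}$ is produced by iterating the Lasota--Yorke inequality on $V$ after noting that $\|\func{F}_{\ast}^{n}\mu\|_{w}\to 0$ exponentially on $V$ (a mixing statement that is obtained either by revisiting the weak-norm contraction on the zero-mass subspace or, alternatively, via a Hilbert-metric argument on a suitable invariant cone of positive elements of $S^{\infty}$, as suggested in the introduction). This simultaneously rules out any other peripheral eigenvalue and supplies the decomposition with quantitative estimates claimed in (a)--(c).
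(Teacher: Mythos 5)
The central step of your argument—compactness of the $S^{\infty}$-unit ball in the weak norm via Arzel\`a--Ascoli, followed by Hennion's theorem—does not work here, and the paper says so explicitly at the start of its proof: there is no compact immersion of $S^{\infty}$ into $\mathcal{L}^{\infty}$. The reason is structural: both norms use the very same dual-of-$\zeta$-H\"older distance $W_1^\zeta$ on the fibers, and $\|\cdot\|_{S^{\infty}}$ only adds H\"older control of the marginal density $\phi_x$ in the base direction. A bound on $\|\mu\|_{S^{\infty}}$ therefore gives no extra regularity of the conditional measures $\mu|_\gamma$ over what $\|\cdot\|_{\infty}$ already sees, so an Arzel\`a--Ascoli argument in the fiber direction has nothing to bite on; one can easily produce a sequence $\mu_n$ with identical $\phi_x\equiv 1$ whose conditionals $\mu_n|_\gamma$ have no $W_1^\zeta$-convergent subsequence. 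The paper instead proves the spectral gap directly, without any compactness: it establishes (i) a Lasota--Yorke inequality $\|\func{F}_*^n\mu\|_{S^{\infty}}\le A\lambda^n\|\mu\|_{S^{\infty}}+B_2\|\mu\|_{\infty}$ and (ii) an exponential convergence to equilibrium $\|\func{F}_*^n\mu\|_{\infty}\le D_2\beta_1^n\|\mu\|_{S^{\infty}}$ on $\mathcal{V}_s=\{\mu:\mu(\Sigma)=0\}$, and then iterates these two estimates (splitting $n=2m+d$) to get $\|\func{F}_*^n|_{\mathcal{V}_s}\|\le K_1\xi^n$, from which $\func{P}\mu=\mu(\Sigma)\mu_0$, $\func{N}=\func{F}_*-\func{P}$ yields (a)--(c). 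Your final sentence does gesture at exactly this alternative, but you present it as a side remark rather than the actual proof, and the main route you propose (Hennion) is unavailable.

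Two smaller misattributions are worth flagging. In the paper's Lasota--Yorke inequality for $\|\cdot\|_{S^{\infty}}$, the contraction factor $\lambda$ comes entirely from the base dynamics — it is the rate in the H\"older Lasota--Yorke inequality for $\func{P}_f$ (Theorem~\ref{loiub} and Corollary~\ref{irytrtrte}) applied to $\phi_x$; the fiber contraction (G1) contributes nothing to that inequality (it only gives weak contraction of $\|\cdot\|_\infty$). The fiber contraction $\alpha$ enters separately, in the convergence-to-equilibrium step via Lemma~\ref{opsdas} ($\|\func{F}_{\gamma*}\mu\|_W\le\alpha^\zeta\|\mu\|_W+\mu(K)$) and Proposition~\ref{5.6}. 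Also, there is no hyperbolic-time / large-deviation argument anywhere: hypotheses (f1)--(f3) place $f$ in the Castro--Varandas class where $\func{P}_f$ already has a spectral gap on $H_\zeta$ for the conformal reference $m_1$, so one iterate suffices; and (G1) is the fiber contraction, not a tail condition on the base.
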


Next proposition is a consequence of all previous theorems. It shows that the system $F$ has exponential decay of correlations ($\displaystyle{\lim _{n \to \infty} {C_n(f,g)}=0}$ exponentially fast) for observables $f \in \Theta _{\mu _0}$ and $g\in \ho_\zeta(\Sigma)$. Its proof is given in section \ref{decay}.

\begin{athm}
	\label{decay1}
	For every $\zeta$-H\"older function $g: \Sigma \longrightarrow \mathbb{R} $ and all $f \in \Theta _{\mu _0}$, it holds $$\left| \int{(g \circ F^n )  f}d\mu_0 - \int{g  }d\mu_0 \int{f  }d\mu_0 \right| \leq ||f \mu _0||_{S^{\infty}} K |g|_{\zeta}  \xi ^{n} \ \ \forall n \geq 1,$$where $\xi$ and $K$ are from Theorem \ref{spgapp} and $|g|_{\zeta} := |g|_\infty + H_\zeta(g)$. 
\end{athm}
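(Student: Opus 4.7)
The plan is to reduce the estimate to a direct application of the spectral decomposition of Theorem \ref{spgapp} to the signed measure $f\mu_0$. Note that $f\mu_0\in S^{\infty}$ by the definition of $\Theta_{\mu_0}$, so the decomposition is applicable. Using that $\func{F}_{\ast}$ is the dual of composition with $F$, one has
$$\int (g\circ F^{n})\, f\, d\mu_0=\int g\, d\bigl(\func{F}_{\ast}^{n}(f\mu_0)\bigr),$$
and iterating $\func{F}_{\ast}=\func{P}+\func{N}$ under the relations $\func{P}^{2}=\func{P}$ and $\func{P}\func{N}=\func{N}\func{P}=0$ gives $\func{F}_{\ast}^{n}(f\mu_0)=\func{P}(f\mu_0)+\func{N}^{n}(f\mu_0)$.

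Next I would identify the leading term. By Theorem \ref{belongss}, $\mu_0$ is the unique $F$-invariant probability in $S^{\infty}$, so it is, up to scalar, the unique eigenvector of $\func{F}_{\ast}$ with eigenvalue $1$; combined with $\dim\func{P}(S^{\infty})=1$ this forces $\func{P}(S^{\infty})=\mathbb{R}\mu_0$, and hence $\func{P}(f\mu_0)=c_{f}\mu_0$ for some scalar $c_{f}$. To pin down $c_{f}$ I would use the total-mass functional $\nu\mapsto\nu(\Sigma)$, which is preserved by $\func{F}_{\ast}$ and continuous on $S^{\infty}$ (the constant $1$ is a $\zeta$-H\"older observable, so its pairing with any $\nu\in S^{\infty}$ is controlled by $\|\nu\|_{S^{\infty}}$ via the Wasserstein--Kantorovich-type construction of \cite{GLu}). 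Since $\|\func{N}^{n}(f\mu_0)\|_{S^{\infty}}\to 0$ by item (b) of Theorem \ref{spgapp}, passing to the limit in $\func{F}_{\ast}^{n}(f\mu_0)(\Sigma)=(f\mu_0)(\Sigma)$ yields $c_{f}\mu_0(\Sigma)=\int f\, d\mu_0$, i.e.\ $c_{f}=\int f\, d\mu_0$. Therefore
$$\int (g\circ F^{n})\, f\, d\mu_0-\int f\, d\mu_0\int g\, d\mu_0=\int g\, d\bigl(\func{N}^{n}(f\mu_0)\bigr).$$

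To finish, I would combine two ingredients: the construction of $\|\cdot\|_{S^{\infty}}$ in section \ref{sec1} is specifically tailored so that the pairing with any $\zeta$-H\"older function $g$ satisfies
$$\left|\int g\, d\nu\right|\leq |g|_{\zeta}\,\|\nu\|_{S^{\infty}}\qquad \text{for every } \nu\in S^{\infty},$$
while part (b) of Theorem \ref{spgapp} gives $\|\func{N}^{n}(f\mu_0)\|_{S^{\infty}}\leq K\xi^{n}\|f\mu_0\|_{S^{\infty}}$. Applying the former to $\nu=\func{N}^{n}(f\mu_0)$ and substituting the latter produces exactly $\|f\mu_0\|_{S^{\infty}}K|g|_{\zeta}\xi^{n}$, which is the asserted inequality.

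The main obstacle is not the abstract argument above but the two compatibility statements about $\|\cdot\|_{S^{\infty}}$ used in the last two paragraphs: namely, that it dominates the dual $\zeta$-H\"older seminorm with the clean constant $|g|_{\zeta}=|g|_{\infty}+H_{\zeta}(g)$, and that it makes the total-mass functional continuous so that $c_{f}$ is determined by passing to the limit. Both are built into the concrete definition of the norm in section \ref{sec1}; once they are recorded, Theorem \ref{decay1} is an immediate corollary of Theorems \ref{belongss} and \ref{spgapp}.
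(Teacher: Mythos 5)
Your proof is correct and follows essentially the same route as the paper: rewrite the correlation as $\int g\, d\bigl(\func{N}^{n}(f\mu_0)\bigr)$ via the spectral decomposition $\func{F}_{\ast}=\func{P}+\func{N}$, then combine the domination of the dual $\zeta$-H\"older pairing by $\|\cdot\|_{S^{\infty}}$ (which the paper routes through the intermediate inequality $\bigl|\int g\, d\nu\bigr|\leq \|\nu\|_{W}\max\{H_{\zeta}(g),|g|_{\infty}\}\leq\|\nu\|_{S^{\infty}}\,|g|_{\zeta}$) with item (b) of Theorem \ref{spgapp}. The one place where you work harder than necessary is in pinning down $\func{P}(f\mu_0)$: the paper's proof of Theorem \ref{spgapp} defines $\func{P}$ explicitly by $\func{P}(\mu)=\mu(\Sigma)\,\mu_{0}$, so $\func{P}(f\mu_0)=\bigl(\int f\,d\mu_0\bigr)\mu_0$ is immediate; your abstract derivation via uniqueness of the invariant measure and continuity of the total-mass functional on $S^{\infty}$ is sound, but it reproves a fact that is already available by construction.
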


The following theorem is an estimate for the Holder's constant (see equation (\ref{Lips2}) in Definition \ref{Lips3}) of the disintegration of the unique $F$-invariant measure $\mu_0 $ in $S^{\infty}$. This kind of result has many applications and similar estimations (for other systems) were given in \cite{GLu} and \cite{BM}. In \cite{GLu}, for instance, they use the regularity of the disintegration to prove stability of the $F$-invariant measure under a kind of \textit{ad-hoc} perturbation. Here, we use this result to show that the abstract set $\Theta _{\mu _0}$, defined above, contains the $\zeta$-Holders functions. The proof of the following result is presented in section \ref{hhhhhhhh}.

\begin{athm}
	Suppose that $F:\Sigma \longrightarrow \Sigma$ satisfies (f1), (f2), (f3), (G1), (G2) and $(\alpha \cdot L)^\zeta<1$ and consider the unique $F$-invariant probability $\mu _{0}\in S^\infty$. Then $\mu _{0}\in \mathcal{H} _\zeta^{+}$ and 
	\begin{equation*}
	|\mu _{0}|\leq \dfrac{D}{1-\beta},
	\end{equation*}where $D$ and $\beta$ are from Proposition \ref{iuaswdas}.
	\label{regg}
\end{athm}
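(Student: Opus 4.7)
The plan is to combine a Lasota--Yorke type inequality supplied by Proposition \ref{iuaswdas} with the spectral gap of Theorem \ref{spgapp}, and to pass to the limit using a semi-continuity property of the Hölder seminorm $|\cdot|$ with respect to convergence in the weaker norm $\|\cdot\|_{S^\infty}$.

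First, I choose a convenient test measure $\mu\in\mathcal{H}_\zeta^+$ for which $|\mu|$ is finite and controllable, e.g.\ a product $\mu=m_1\times\nu$ for some smooth $\nu$ on $K$, whose disintegration is constant in the base variable and hence has vanishing Hölder constant along the stable direction. I expect Proposition \ref{iuaswdas} to give an estimate of the form
\begin{equation*}
|\func{F}_\ast\mu|\;\leq\;\beta\,|\mu|+D\,\|\mu\|_{S^\infty},
\end{equation*}
where the constant $\beta<1$ comes precisely from the hypothesis $(\alpha\cdot L)^\zeta<1$. Iterating this inequality $n$ times and using that $\|\func{F}_\ast^n\mu\|_{S^\infty}$ stays bounded along the orbit (by Theorem \ref{spgapp}), I get
\begin{equation*}
|\func{F}_\ast^{n}\mu|\;\leq\;\beta^{n}|\mu|+D\sum_{k=0}^{n-1}\beta^{k}\,C\;\leq\;\beta^{n}|\mu|+\frac{D\,C}{1-\beta},
\end{equation*}
for a constant $C$ depending only on $\|\mu\|_{S^\infty}$. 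Normalising $\mu$ so that $\|\mu\|_{S^\infty}=1$ should make $C=1$, producing the bound $D/(1-\beta)$.

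Next, I invoke Theorem \ref{spgapp}: since $\func{F}_\ast$ has spectral gap on $S^\infty$ with projection onto the one-dimensional subspace $\langle\mu_0\rangle$, the iterates $\func{F}_\ast^n\mu$ converge to $\mu_0$ in $S^\infty$ exponentially fast (the projection $\func{P}\mu$ equals $\mu_0$ for a suitably normalised $\mu$). Thus $\mu_0$ is the $S^\infty$-limit of a sequence $(\func{F}_\ast^n\mu)_{n\geq 1}$ whose Hölder constants are uniformly bounded by $\beta^{n}|\mu|+D/(1-\beta)$.

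The main obstacle, and the step that requires the most care, is transferring this uniform bound from the iterates to the limit $\mu_0$. For this I need a lower semi-continuity statement: if $\nu_n\to\nu$ in $S^\infty$ and $\sup_n|\nu_n|<\infty$, then $\nu\in\mathcal{H}_\zeta^+$ with $|\nu|\leq\liminf_n|\nu_n|$. I would establish this by writing $|\cdot|$ as a supremum indexed by pairs of points in the base $M$ at small distance together with $\zeta$-Hölder test observables on $K$; each of the quantities inside the supremum is continuous with respect to the dual-of-Hölder norm used in the definition of $\|\cdot\|_{S^\infty}$, so that the supremum is lower semi-continuous. Applying this to $\nu_n=\func{F}_\ast^n\mu$ and letting $n\to\infty$ (so that $\beta^n|\mu|\to 0$) yields $\mu_0\in\mathcal{H}_\zeta^+$ together with $|\mu_0|\leq D/(1-\beta)$, as claimed.
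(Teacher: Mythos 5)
Your proposal follows essentially the same route as the paper: start from the product measure $m=m_1\times\nu$ whose disintegration is constant (so its H\"older seminorm vanishes), use the Lasota--Yorke-type inequality of Proposition \ref{iuaswdas} (iterated, which is exactly Corollary \ref{kjdfhkkhfdjfh}) to obtain a uniform bound $\frac{D}{1-\beta}$ on $|\Gamma_{\func{F}_\ast^n m}^{\omega_n}|_\zeta$, and then pass to the limit using that $S^\infty$-convergence forces $m_1$-a.e.\ pointwise convergence of the paths in the $W$-metric, from which lower semi-continuity of the H\"older constant follows. One small correction: in your Lasota--Yorke step the error term coming from Proposition \ref{iuaswdas} is $D\|\mu\|_\infty$, not $D\|\mu\|_{S^\infty}$; since $\|m\|_\infty=1$ for the product measure (whereas $\|m\|_{S^\infty}=2$), using the correct norm is what produces the sharp constant $\frac{D}{1-\beta}$ without any further normalization.
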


As a consequence of the estimative given in the previous theorem, next theorem complements the Theorem \ref{decay1}. It implies, for instance, $$\lim _{n \to \infty} {C_n(f,g)}=0$$exponentially fast for all Holder observable $f, g \in \ho_\zeta(\Sigma)$. Its demonstration is given in section \ref{last123}.

\begin{athm}\label{disisisii}
	Suppose that $F:\Sigma \longrightarrow \Sigma$ satisfies (f1), (f2), (f3), (G1), (G2) and $(\alpha \cdot L)^\zeta<1$ and let $\mu_0$ be the unique $F$-invariant measure in $S^\infty$. Then, $\ho_\zeta(\Sigma) \subset \Theta _{\mu_0}$.
\end{athm}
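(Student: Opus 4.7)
The plan is to use Theorem \ref{regg} together with the fact that multiplication of a Hölder regular family of measures by a $\zeta$-Hölder function preserves the underlying regularity (in the ``dual of Hölder'' norm on $SM(K)$ used throughout the paper). Concretely, by Theorem \ref{regg} the invariant measure $\mu_{0}$ belongs to $\mathcal{H}_\zeta^{+}$, which means its Rokhlin disintegration along the stable leaves $\{x\}\times K$, $x\mapsto \mu_0^{x}$, is $\zeta$-Hölder from $M$ into $SM(K)$ with a quantitative bound on the Hölder seminorm. The goal is therefore to show that, for every $f\in \ho_\zeta(\Sigma)$, the signed measure $f\mu_{0}$ belongs to $S^{\infty}$.

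First, I would record that $f\mu_{0}$ has disintegration $x\mapsto f(x,\cdot)\,\mu_0^{x}$; this follows immediately from the defining property of the disintegration applied to $(f\mu_{0})(E)=\int_{E} f\, d\mu_{0}$. Next, I would prove the key pointwise Hölder estimate in the dual-Hölder norm: for $x_1,x_2\in M$,
\begin{equation*}
\bigl\| f(x_1,\cdot)\mu_0^{x_1} - f(x_2,\cdot)\mu_0^{x_2} \bigr\|_{W}
\leq \bigl\| f(x_1,\cdot)(\mu_0^{x_1}-\mu_0^{x_2}) \bigr\|_{W}
+ \bigl\| (f(x_1,\cdot)-f(x_2,\cdot))\mu_0^{x_2} \bigr\|_{W}.
\end{equation*}
The first summand is controlled using the fact that $f(x_1,\cdot)$ is itself $\zeta$-Hölder on $K$ with constant bounded by $|f|_{\zeta}$; multiplication by a Hölder function is bounded on $SM(K)$ equipped with the dual-Hölder norm, so the first summand is bounded by $C\,|f|_{\zeta}\,\|\mu_0^{x_1}-\mu_0^{x_2}\|_{W}$, which by $\mu_{0}\in\mathcal{H}_\zeta^{+}$ is $\leq C'\,|f|_{\zeta}\,|\mu_0|\,d(x_1,x_2)^{\zeta}$. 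For the second summand, the function $y\mapsto f(x_1,y)-f(x_2,y)$ has $|\,\cdot\,|_\infty \leq H_\zeta(f)\,d(x_1,x_2)^{\zeta}$ and is itself $\zeta$-Hölder in $y$, so it is bounded in the dual-Hölder norm by $C''H_\zeta(f)\,d(x_1,x_2)^{\zeta}\,\|\mu_0^{x_2}\|_{\mathrm{tot}}$. Together these give the required $\zeta$-Hölder behavior of $x\mapsto f(x,\cdot)\mu_0^{x}$.

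It then remains to check that this Hölder regularity of the disintegration of $f\mu_{0}$, combined with the obvious bound $|f\mu_{0}|\leq |f|_\infty \mu_{0}$ on the variation and with $\pi_{x\ast}\mu_{0}=m_{1}$ (Theorem \ref{belongss}), is enough to conclude $f\mu_{0}\in S^{\infty}$. This amounts to unpacking the definition of the $S^{\infty}$-norm: its vertical/fibrewise component is exactly the Hölder modulus of $x\mapsto f(x,\cdot)\mu_0^{x}$ that we just bounded, while the marginal/$L^{1}$-like component is controlled by $|f|_{\infty}$. Putting the two estimates together yields $\|f\mu_{0}\|_{S^{\infty}}<\infty$, i.e., $f\in\Theta_{\mu_{0}}$.

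The main obstacle I expect is the first summand above: one must verify that multiplication by a $\zeta$-Hölder function is a bounded operator on $(SM(K),\|\cdot\|_{W})$ with a norm controlled by $|f(x_1,\cdot)|_{\zeta}$, and that this estimate transfers cleanly into the $S^{\infty}$-seminorm rather than picking up extra, uncontrollable factors. Everything else is bookkeeping once the splitting above is in place; the $\zeta$-Hölder regularity of $\mu_{0}$ provided by Theorem \ref{regg} is precisely what makes the argument close.
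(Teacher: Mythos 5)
Your core estimate and the way it uses Theorem \ref{regg} are essentially the same as the paper's, but your description of the $S^{\infty}$ norm is incorrect in a way worth flagging. Recall $\|\mu\|_{S^{\infty}} = |\phi_x|_{\zeta} + \|\mu\|_{\infty}$, where $\phi_x$ is the \emph{scalar} marginal density $d(\pi_{x*}\mu)/dm_1$ and $\|\mu\|_{\infty}$ is the $m_1$-essential supremum of $\|\mu|_\gamma\|_W$ over the leaves. So what must be shown is that the real-valued function $\gamma\mapsto\phi_x(\gamma)$ is $\zeta$-H\"older and that $\esssup_\gamma\|(f\mu_0)|_\gamma\|_W<\infty$; there is \emph{no} H\"older-modulus-of-the-path term in $\|\cdot\|_{S^{\infty}}$ (that is the content of $\mathcal{H}_\zeta^+$, used as an intermediate tool, not what membership in $S^\infty$ asks). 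You have the two pieces swapped: you call the fibrewise part ``the H\"older modulus of $x\mapsto f(x,\cdot)\mu_0^x$'' and the marginal part ``controlled by $|f|_\infty$,'' when in fact it is the marginal part that needs the H\"older argument and the fibrewise part that is just an essential-sup bound.

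That said, what you actually prove is a \emph{stronger} statement than what is needed, and it does imply the required one: if $x\mapsto (f\mu_0)|_x$ is $\zeta$-H\"older in $\|\cdot\|_W$, then testing against the constant $1$ (which satisfies $|1|_\infty\leq 1$, $H_\zeta(1)=0$) gives $\zeta$-H\"older regularity of $\phi_x(\gamma)=(f\mu_0)|_\gamma(K)$; and the bound $\|(f\mu_0)|_\gamma\|_W\leq |f|_\infty$ (using that $\mu_0|_\gamma$ is a probability) gives $\|f\mu_0\|_\infty<\infty$. So your argument closes. The paper's proof is precisely the ``scalar'' version of your triangle inequality: it bounds $|\overline h(\gamma_2)-\overline h(\gamma_1)|$ directly for $\overline h(\gamma)=\int h(\gamma,y)\,d(\mu_0|_\gamma)$ via the splitting $\int h(\gamma_2,y)\,d(\mu_0|_{\gamma_2}-\mu_0|_{\gamma_1}) + \int (h(\gamma_2,y)-h(\gamma_1,y))\,d(\mu_0|_{\gamma_1})$, which is your decomposition evaluated at $g\equiv 1$ together with the choice of test function $h(\gamma_2,\cdot)/|h|_\zeta$. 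One more point: you dismiss the identification of the disintegration of $f\mu_0$ with $x\mapsto f(x,\cdot)\mu_0^x$ as ``immediate''; in the paper this is proved carefully as Lemma \ref{hdgfghddsfg} (one must track normalization of the fiber probabilities and the associated marginal density, and handle the zero set of $\overline h$), and the estimate $\nu\in\mathcal{L}^\infty$ is extracted from that lemma. It is worth spelling that step out rather than taking it for granted.
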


\noindent\textbf{Plan of the paper.} The paper is structured as follows:

\begin{itemize}
	\item Section 2: we introduce the kind of systems we consider in the
	paper. Essentially, it is a class of systems which contains a set of piecewise partially hyperbolic dynamics ($F(x,y)=(f(x), G(x,y))$) with a non-uniformly expanding basis map, $f$, and whose fibers are uniformly contracted $m_1$-a.e, where $m_1$ is an $f$-invariant probability measure. Here and until section 7, where more regularity is required, we do not ask for any kind of regularity on $G$ in the horizontal direction (for the functions $x \longmapsto G(x, y)$, $y$ fixed);
	
	\item Section 3: we introduce the functional spaces used in the paper and
	discussed in the previous paragraphs;
	
	\item Section 4: we show the basic properties of the transfer operator of $F$
	when applied to these spaces. In particular we see that there is a useful
	\textquotedblleft Perron-Frobenius\textquotedblright-like formula (see
	Proposition \ref{niceformulaab});
	
	\item Section 5: we discuss the basic properties of the iteration of the
	transfer operator on the spaces we consider. In particular, we prove a \emph{%
		Lasota-Yorke inequality and a convergence to equilibrium statement} (see
	Propositions \ref{lasotaoscilation2} and \ref{5.8});
	
	\item Section 6: we use the convergence to equilibrium and the
	Lasota-Yorke inequalities to prove the \emph{spectral gap} for the transfer
	operator associated to the system restricted to a suitable strong space (see
	Theorem \ref{spgapp}) and prove a decay of correlation statement on an abstract set of functions;
	
	\item Section 7: we consider a similar system with some more
	regularity on the family of functions $\{G(\cdot,y)\}_{y \in K}$, $x \longmapsto G(x, y)$: there exists a partition (into open sets) $\mathcal{P} = P_1, \cdots, P_{\deg (f)}$, such that the restriction of the function $x \longmapsto G(x, y)$ to $P_i$ is $(k_{y,i}, \zeta)$-H\"older, where the family $\{k_{y,i}\}_{y\in K}$ is bounded, and it holds for all $i$. This allows discontinuities on the boundaries $(\partial P_i) \times K$. For this sort of system, we prove a stronger regularity result for the iteration of probability measures (see Corollary \ref{kjdfhkkhfdjfh} and Remark \ref{kjedhkfjhksjdf}) and show that the $F$-invariant physical measure has a $\zeta$-H\"older disintegration along the stable fibers (see Theorem \ref{regg});
	
	\item Section 8: we use the $\zeta$-H\"older regularity of the physical measure established in section 7, to prove that the abstract set of functions on which the system has decay of correlations contains all $\zeta$-H\"older functions (see Theorem \ref{disisisii}).
\end{itemize}

\textbf{Acknowledgment} We are thankful to Stefano Galatolo for all valuable comments and fruitful discussions regarding this work.

This work was partially supported by Alagoas Research Foundation-FAPEAL (Brazil) Grants 60030 000587/2016, CNPq (Brazil) Grants 300398/2016-6, CAPES (Brazil) Grants 99999.014021/2013-07 and EU Marie-Curie IRSES Brazilian-European partnership in
Dynamical Systems (FP7-PEOPLE- 2012-IRSES 318999 BREUDS).

\section{Settings\label{sec1}}

Fix a compact and connected Riemannian manifold, $M$, equipped with its Riemannian metric $d_1$. For the sake of simplicity, we suppose that $\diam(M) = 1$, this is not restrictive but will avoid multiplicative constants. Moreover, consider a compact metric space $(K,d_2)$, endowed with its Borel's sigma algebra, $\mathcal{B}$. We set $\Sigma := M\times K$ and we endow this space with the metric $d_1 + d_2$.

\subsection{Contracting Fiber Maps with Non Uniformily Expanding Basis\label{sec2}}

Let $F$ be the map $F:\Sigma \longrightarrow \Sigma$ given by
\begin{equation}\label{cccccc}
F(x,z)=(f(x),G(x,z)),
\end{equation}where $G: \Sigma \longrightarrow K$ and $f:M\longrightarrow M$ are measurable maps satisfying what follows.

\subsubsection{Hypothesis on $f$}\label{hf}
Suppose that $f:M \longrightarrow M$ is a local diffeomorphism and assume that there is a continuous function $L:M\longrightarrow\mathbb{R}$, s.t. for every $x \in M$ there exists a neighbourhood $U_x$, of $x$, so that $f_x:=f|_{U_x}: U_x \longrightarrow f(U_x)$ is invertible and $$d_1(f_x^{-1}(y), f_x^{-1}(z)) \leq L(x)d_1(y, z), \ \ \forall y,z \in f(U_x).$$

In particular, $\#f^{-1}(x)$ is constant for all $x \in M$. We set $\deg(f):=\#f^{-1}(x)$, the degree of $f$.

Denote by 
\begin{equation}\label{ro}
\rho(\gamma) := \dfrac{1}{|\det (f^{'}(\gamma))|},
\end{equation}where $\det (f^{'})$ is the Jacobian of $f$ with respect to $m_1$.

Suppose that there is an open region $\mathcal{A} \subset M$ and constants $\sigma >1$ and $L\geq 1$ such that  
\begin{enumerate}
	\item[(f1)] $L(x) \leq L$ for every $x \in \mathcal{A}$ and $L(x) < \sigma ^{-1}$ for every $x \in \mathcal{A}^c$. Moreover, $L$ is close enough to $1$: the precise estimation for $L$ is given in equation (\ref{kdljfhkdjfkasd});
	\item[(f2)] There exists a finite covering $\mathcal{U}$ of $M$, by open domains of injectivity for $f$, such that $\mathcal{A}$ can be covered by $q<\deg(f)$ of these domains.
\end{enumerate}

Denote by $H_\zeta$ the set of the H\"older functions $h:M \longrightarrow \mathbb{R}$, i.e., if we define $$H_\zeta(h) := \sup _{x\neq y} \dfrac{|h(x) - h(y)|}{d_1(x,y)^\zeta},$$then

$$H_\zeta:= \{ h:M \longrightarrow \mathbb{R}: H_\zeta(h) < \infty\}.$$

Next, (f3) is an open condition relatively to the H\"older norm and equation (\ref{f32}) means that $\rho$ belongs to a small cone of H\"older continuous functions (see \cite{VAC}).

\begin{enumerate}
	
	\item[(f3)] There exists a sufficiently small $\epsilon _\rho >0$ s.t. 
	
	\begin{equation}\label{f31}
	\sup \log (\rho) - \inf \log (\rho) <\epsilon _\rho;
	\end{equation}and
	\begin{equation}\label{f32}
	H_\zeta (\rho) < \epsilon_\rho \inf \rho.
	\end{equation}
\end{enumerate}

Precisely, we suppose the constants $\epsilon _\rho$ and $L$ satisfy the condition 
\begin{equation}\label{kdljfhkdjfkasd}
\exp{\epsilon_\rho} \cdot \left( \dfrac{(\deg(f) - q)\sigma ^{-\alpha} + qL^\alpha[1+(L-1)^\alpha] }{\deg(f)}\right)< 1.
\end{equation}

According to \cite{VAC}, such a map (satisfying (f1), (f2) and (f3)) $f:M \longrightarrow M$ has an invariant probability $m_1$ of maximal entropy, absolutely continuous with respect to a conformal measure, and its Perron-Frobenius operator with respect to $m_1$, $\func{P}_f: L^1_{m_1} \longrightarrow L^1_{m_1}$, defined for $\varphi  \in L^1_{m_1}$ by $\func{P}_f (\varphi)(x) = \sum _{i=1}^{\deg(f)}{\varphi (x_i)\rho (x_i)}$ ($x_i$ is the $i$-th pre image of $x$, $i=1, \cdots, \deg(f)$), satisfies the following result.

\begin{theorem}\label{loiub}
	There exist $0< r<1$ and $D>0$ s.t. for all $\varphi \in H_\zeta$, with $\int{\varphi}dm_1 =0$, it holds $$|\func{P_f}^n(\varphi)|_\zeta \leq Dr^n|\varphi|_\zeta \ \ \forall \ n \geq 1,$$where $|\varphi|_\zeta := H_\zeta (\varphi) + |\varphi|_{\infty} $ for all $\varphi \in H_\zeta$.  
\end{theorem}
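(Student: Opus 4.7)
The statement is the Castro--Varandas spectral gap result for non-uniformly expanding maps, so the proof plan is essentially a Doeblin--Fortet / Hennion--Herv\'e argument adapted to this setting. The plan is to prove a Lasota--Yorke inequality for $\func{P}_f$ on $H_\zeta$, deduce quasi-compactness, then identify the peripheral spectrum as the simple eigenvalue $1$ associated with the invariant density, and finally read off exponential decay on the zero-mean subspace.

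\textbf{Step 1: Lasota--Yorke inequality.} First I would write out $\func{P}_f^n \varphi(x) = \sum_{y \in f^{-n}(x)} \rho_n(y)\,\varphi(y)$, where $\rho_n = \prod_{k=0}^{n-1} \rho \circ f^k$, and split preimages according to how many iterates land in the bad region $\mathcal{A}$. Using that at each step only at most $q$ of the $\deg(f)$ preimage branches lie over $\mathcal{A}$, and that on these branches the local inverse has Lipschitz constant $\leq L$ (so it distorts $\zeta$-H\"older seminorms by at most $L^\zeta[1+(L-1)^\zeta]$), while on the other branches the inverse contracts by $\sigma^{-\zeta}$, a direct computation gives a bound of the form
\[
H_\zeta(\func{P}_f^n \varphi) \leq C_1 \lambda^n H_\zeta(\varphi) + C_2\, n\, |\varphi|_\infty\,\sup \rho_n,
\]
with
\[
\lambda := \exp(\epsilon_\rho)\cdot\frac{(\deg(f)-q)\sigma^{-\zeta} + q L^\zeta[1+(L-1)^\zeta]}{\deg(f)}\,<\,1
\]
by hypothesis (\ref{kdljfhkdjfkasd}). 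The factor $\exp(\epsilon_\rho)$ comes from the distortion control (\ref{f31}), which also normalizes $\rho_n$ uniformly (since $\deg(f)$ equals $1/\inf\rho$ after using conformality). Together with the trivial bound $|\func{P}_f \varphi|_\infty \leq |\varphi|_\infty$ (true because $\func{P}_f$ is a Markov operator with respect to $m_1$), this is a Lasota--Yorke inequality on $(H_\zeta,|\cdot|_\zeta)$ relative to the auxiliary norm $|\cdot|_\infty$.

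\textbf{Step 2: Quasi-compactness and spectral description.} Since the embedding $(H_\zeta,|\cdot|_\zeta)\hookrightarrow (C^0,|\cdot|_\infty)$ is compact by Arzel\`a--Ascoli and $\func{P}_f$ is bounded on both spaces, the Hennion--Herv\'e criterion applied to the Lasota--Yorke inequality yields quasi-compactness of $\func{P}_f:H_\zeta\to H_\zeta$ with essential spectral radius $\leq \lambda^{1/?}<1$. The spectrum outside any disk of radius $>\lambda$ then consists of finitely many eigenvalues of finite multiplicity. Since $m_1$ is $f$-invariant and of maximal entropy, $\func{P}_f \mathbf{1} = \mathbf{1}$, so $1$ is an eigenvalue. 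Topological transitivity (implicit in the setting of~\cite{VAC}) together with the cone condition (\ref{f32}) rules out other peripheral eigenvalues and forces $1$ to be simple: any peripheral eigenfunction would have to be $\rho$-almost constant on fibers and hence globally constant.

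\textbf{Step 3: Decay on zero-mean functions.} On the closed, $\func{P}_f$-invariant hyperplane $V:=\{\varphi\in H_\zeta:\int\varphi\,dm_1=0\}$, the spectrum is contained in $\{|z|\leq r\}$ for some $r<1$, so $\|\func{P}_f^n|_V\|_\zeta \leq D r^n$ for some $D>0$, giving exactly the stated conclusion.

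\textbf{Main obstacle.} The delicate point is the Lasota--Yorke inequality in Step~1: the expanding factor $\sigma^{-\zeta}$ is only gained off $\mathcal{A}$, while on $\mathcal{A}$ the local inverse can actually expand $\zeta$-H\"older seminorms by $L^\zeta[1+(L-1)^\zeta]\geq 1$. The quantitative smallness condition (\ref{kdljfhkdjfkasd}) is precisely what makes the weighted average over the $\deg(f)$ branches strictly contracting; keeping the distortion error $\exp(\epsilon_\rho)$ from~(f3) compatible with this geometric contraction is the technical core of the argument.
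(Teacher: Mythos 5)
The paper does not prove Theorem~\ref{loiub} at all: it is quoted verbatim as a result of Castro--Varandas~\cite{VAC}, and the sentence immediately preceding it makes that attribution explicit. So your proposal is not an alternative rendering of a proof that appears in the paper; it is a from-scratch proof of a cited black box.

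Two further observations are worth making. First, the route taken in~\cite{VAC} is not the Hennion--Herv\'e / quasi-compactness argument you sketch but Birkhoff's projective-cone technique, as the introduction of the present paper says explicitly when discussing~\cite{VM} and~\cite{VAC}. The cone approach directly produces contraction of the normalised operator on a cone of H\"older densities in the Hilbert projective metric, which simultaneously yields uniqueness of the invariant density and the exponential rate $r$, with no separate step to rule out peripheral eigenvalues. In your Step~2 that step is glossed over: quasi-compactness alone does not locate the peripheral spectrum, and the appeal to ``topological transitivity (implicit in the setting)'' plus the cone condition~(\ref{f32}) would need to be turned into an honest argument excluding eigenvalues of modulus $1$ other than $1$. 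This is precisely the part of the proof that the cone method in~\cite{VAC} handles for free and that a Lasota--Yorke route has to work for.

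Second, the logical flow inside this paper is the reverse of yours. Here the authors take Theorem~\ref{loiub} as given and \emph{deduce} a Lasota--Yorke inequality for $\func{P}_f$ from it (Theorem~\ref{asewqtw}, via the abstract Lemmas~\ref{contracaoimplicaly} and~\ref{corolariocontracaoimplicaly}). Your plan runs in the opposite direction, proving the Lasota--Yorke inequality first and then extracting the spectral gap. Both directions are classical, but they are not the same argument, and your Step~1 would also need care: the prefactor $n$ multiplying $|\varphi|_\infty \sup\rho_n$ in your displayed bound must be absorbed (typically by a telescoping estimate using the normalisation $\sum_{y\in f^{-n}(x)}\rho_n(y)=1$), and the exponent in the mixing condition~(\ref{kdljfhkdjfkasd}) should be read consistently with the H\"older exponent $\zeta$ used throughout.
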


\begin{remark}\label{chkjg} 
	
	By (f2),  (see Lemma \ref{jdjkfklfd}) there exists a disjoint finite family, $\mathcal{P}$, of open sets, $P_1, \cdots, P_{\deg{(f)}}$, s.t. $\bigcup_{i=1}^{\deg{(f)}} P_i=M$ $m_1$-a.e., and $f|_{P_{i}}:P_i \longrightarrow f(P_i)$ is a diffeomorfism for all $i=1, \cdots \deg{(f)}$. Moreover, $f(P_i)=M$ $m_1$-a.e., for all $i=1, \cdots, \deg(f)$. Therefore, it holds that $$\func{P}_f(\varphi)(x) = \sum _{i=1}^{\deg{(f)}} {\varphi (x_i)\rho (x_i)}\chi_{f(P_{i})}(x),$$ for $m_1$-a.e. $x \in M$, where $$\rho_i(\gamma):= \frac{1}{|\det (f_i^{'}(\gamma))|}$$ and $f_i = f|_{P_i}$. This expression will be used later on.
\end{remark}

\begin{lemma}\label{jdjkfklfd}
	Let $(f,M,m_1)$ be a non-singular system ($m_1 (A)=0 \Rightarrow m_1 (f^{-1}(A))=0$, where $M$ is a compact and connected manifold and suppose that $m_1$ be absolutely continuous with respect to Lebesgue. Suppose that $f:M\to M$ a local homeomorphism with degree $\deg(f)$. Then, there exist a finite partition $\mathcal{P}=\{P_1, \cdots, P_p\}$, where $p = \deg(f)$, $P_i$ is open for all $i= 1, \cdots, \deg(f)$ and $m_1(M \setminus f(P_i))=0$ for all $i= 1, \cdots, \deg(f)$.  
\end{lemma}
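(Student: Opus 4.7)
My plan is to combine the covering-map structure induced by $f$ with a CW decomposition of $M$ so as to produce a branch cut on a Lebesgue-null set, along which $M$ can be sliced into $p$ sheets.

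First, since $f$ is a local homeomorphism on the compact manifold $M$, a standard argument (closedness coming from compactness, openness coming from being a local homeomorphism, so that $f(M)$ is clopen in the connected $M$, plus the finiteness of the fibres) shows that $f$ is a covering map of degree $p = \deg(f)$. In particular, for every open simply connected $V \subset M$, the preimage $f^{-1}(V)$ is the disjoint union of exactly $p$ open sets, each mapped homeomorphically onto $V$ by $f$.

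Next, I would fix any finite CW decomposition of $M$ and let $N$ be its $(n-1)$-skeleton, where $n = \dim M$. Then $M \setminus N = C_1 \sqcup \cdots \sqcup C_k$ is a finite disjoint union of open $n$-cells, each homeomorphic to an open $n$-ball and hence simply connected. Since $N$ is a finite union of cells of dimension at most $n-1$, it has zero Lebesgue measure; consequently $m_1(N)=0$ by absolute continuity, and then $m_1(f^{-1}(N))=0$ by non-singularity of the system. Applying the covering property cell by cell gives
\[
f^{-1}(C_\ell) \;=\; \tilde C_\ell^{(1)} \sqcup \cdots \sqcup \tilde C_\ell^{(p)},
\]
with each $\tilde C_\ell^{(i)}$ open and $f|_{\tilde C_\ell^{(i)}} : \tilde C_\ell^{(i)} \to C_\ell$ a homeomorphism. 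After choosing an arbitrary labeling of the $p$ sheets above each cell, I set
\[
P_i \;:=\; \bigcup_{\ell=1}^{k} \tilde C_\ell^{(i)}, \qquad i = 1, \ldots, p.
\]
Each $P_i$ is open; the $P_i$'s are pairwise disjoint (different sheets over a fixed $C_\ell$ are disjoint, and sheets above distinct cells lie in disjoint preimages); and $f(P_i) = \bigsqcup_\ell C_\ell = M \setminus N$, so $m_1(M \setminus f(P_i)) = m_1(N) = 0$. Finally $\bigcup_i P_i = f^{-1}(M \setminus N) = M \setminus f^{-1}(N)$ has full $m_1$-measure, so the $\{P_i\}$ form the required partition (mod $m_1$-null sets).

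The only substantive input I expect to need is the existence of a CW decomposition of a compact smooth manifold whose $(n-1)$-skeleton has zero Lebesgue measure, which is classical via the Whitehead--Cairns triangulation theorem. I do not anticipate any serious obstacle beyond correctly invoking this standard fact.
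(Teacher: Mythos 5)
Your proof is correct but takes a genuinely different route from the paper's. The paper works metrically and elementarily: since $f$ is a local homeomorphism on a compact space, each $a\in M$ has a ball $B(a,r_a)$ that is evenly covered; compactness gives a finite subcover; one then refines that cover to a finite measurable partition $\mathcal K$ whose atoms have $m_1$-null boundary (and each atom sits inside an evenly covered ball, hence is itself evenly covered); finally, the $p$ sheets over each atom are relabelled and glued into the sets $P^1,\dots,P^p$, and one passes to interiors. Your approach instead first observes that $f$ is a covering map, replaces the metric cover by a smooth triangulation whose $(n-1)$-skeleton $N$ is Lebesgue-null, and lifts over the open top-dimensional cells using that they are simply connected and locally path-connected, so that each is evenly covered with exactly $p$ sheets. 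Both arguments share the same skeleton (discard a null ``branch cut,'' lift the complement, assemble sheets), but yours is cleaner on the openness of the $P_i$'s, since components of $f^{-1}(\text{open cell})$ are automatically open in the locally connected manifold, whereas the paper has to take interiors of the $P^j$ somewhat informally. The price you pay is heavier topological input (covering space theory plus Whitehead--Cairns), where the paper only uses compactness and the local homeomorphism property.

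One caution worth making explicit: the intermediate sentence ``Since $N$ is a finite union of cells of dimension at most $n-1$, it has zero Lebesgue measure'' is \emph{not} true for an arbitrary CW decomposition, because attaching maps of lower-dimensional cells can have positive-measure image (think of space-filling curves). You correctly repair this at the end by invoking a smooth triangulation (Whitehead--Cairns), under which the $(n-1)$-skeleton is a finite union of smoothly embedded simplices and hence genuinely Lebesgue-null; just make sure the opening of the paragraph is phrased so it does not read as a claim about arbitrary CW structures. With that adjustment the argument is complete and yields the same conclusion, including $m_1\bigl(M\setminus f(P_i)\bigr)=m_1(N)=0$ by absolute continuity.
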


\begin{proof}
	Since $f$ is a local homeomorphism, $a\in M$, there exists $r_a > 0$ s.t.$f:V(a^j)\to B(a,r_a)$ is a homeomorphism for all $j=1,...,p$, where $f^{-1}(a)=\{a^1 , ..., a^p  \}$ and $V(a^j)\cap V(a^i)=\emptyset$, for all $j\ne i$. On the other hand, $M=\bigcup_{a\in M}B(a, r_a)$ and by compacity there exist a finite number of balls $B(a,r_a)$ which covers $M$, i.e., $M=\bigcup_{n=1}^{r}B(a_n , r_{a_n})$, where $a_n \in M$ for all $n=1,\cdots,r$. From this cover, let $\mathcal{K}$ be a finite partition such that $\#\mathcal{K}=:q$ and the boundary of each atom has $m_1$-null measure.

	Therefore, for each $K_t\in \mathcal{K}$ we have $f^{-1}(K_t)= K_t^1 \cup ...\cup K_t^p$ where $K_t^j \cap K_t^i =\emptyset$ for all $j\ne i$ and $1\leq t \leq q$. Thus, $f:K_{t}^j \to K_t$ is a homeomorphism for each $1\leq t \leq q$ and $1\leq j \leq p$. Define $M'= \bigcup_{K\in \mathcal{K}} \func{int}(K)$ in a way that $m_1 (M\setminus M')=0$, where $M\setminus M'$ is the boundary of the sets $K\in \mathcal{K}$. Then, define
	\[
	P^1 =K_{1}^1 \cup  K_{2}^1 \cup K_{3}^1 \cup \dots \cup K_{q}^1,  
	\]
	
	\[
	P^2 =K_{1}^2 \cup  K_{2}^2 \cup K_{3}^2 \cup \dots \cup K_{q}^2,  
	\]
	\begin{center}
		. \\
		. \\
		. \\
	\end{center}

	\[
	P^p =K_{1}^p \cup  K_{2}^p \cup K_{3}^p \cup \dots \cup K_{q}^p.  
	\]
	
	Finally, we have that $\mathcal{P}:=\{ P^j\}_{1\leq j \leq p}$ is a measurable partition of $M$ and by construction  $f:\func{int}(P^j) \to M'$ is a homeomorphism for all $1\leq j \leq p$.
\end{proof}

\subsubsection{Hypothesis on $G$}
We suppose that $G: \Sigma \longrightarrow K$ satisfies:

\begin{enumerate}
	\item [(G1)] $G$ is uniformly contracting on $m_1$-a.e. vertical fiber, $\gamma_x :=\{x\}\times K$. 
\end{enumerate}Precisely, there is $0< \alpha <1$ such that for $m_1$-a.e. $x\in M$ it holds%
\begin{equation}
d_2(G(x,z_{1}),G(x,z_2))\leq \alpha d_2(z_{1},z_{2}), \quad \forall
z_{1},z_{2}\in K.  \label{contracting1}
\end{equation}We denote the set of all vertical fibers $\gamma_x$, by $\mathcal{F}^s$: $$\mathcal{F}^s:= \{\gamma _x:=\{ x\}\times K; x \in M \} .$$ When no confusion is possible, the elements of $\mathcal{F}^s$ will be denoted simply by $\gamma$, instead of $\gamma _x$.

\begin{example}\label{sesprowerpo}
	Let $f_0: \mathbb{T}^d \longrightarrow \mathbb{T}^d$ be a linear expanding map. Fix a covering $\mathcal{P}$ and an atom $P_1 \in \mathcal{P}$ which contains a periodic point (maybe fixed point) $p$. Then, consider a perturbation $f$, of $f_0$, inside $P_1$ by a pitchfork bifurcation, in a way that $p$ becomes a saddle for $f$. Therefore, $f$ coincides with $f_0$ in $P_1^c$, where we have uniform expansion. The perturbation can be made in a way that (f1) is satisfied, i.e., is never too contracting in $P_1$ and $f$ is still a topological mixing. Note that a small perturbation with the previous properties may not exist. If it does, then (f3) is satisfied. In this case, $m_1$ is absolutely continuous with respect to the Lebesgue measure which is an expanding conformal and positive measure on open sets. Hence, there can be no periodic attractors.
\end{example}
\begin{example}\label{sesprowerpoo}
In the previous example, assume that $f_0$ is diagonalisable, with eigenvalues $1< 1+ a<\lambda$, associated to $e_1, e_2$ respectively, and $x_0$ is a fixed point. Fix $a,\epsilon> 0$ such that $\log(\frac{1+ a}{1- a})<\epsilon$ and
\begin{equation*}
	\exp\epsilon\left(\frac{(\deg(f_0)- 1)(1+ a)^{-\alpha}+ (1/(1- a))^{\alpha}[1+ (a/(1- a))^\alpha]}{\deg(f_0)}\right)< 1.
\end{equation*}

Note that any smaller $a> 0$ will still satisfy these equations.

Let $\mathcal U$ be a finite covering of $M$ by open domains of injectivity for $f$. Redefining sets in $\mathcal U$, we may assume $x_0= (m_0, n_0)$ belongs to exactly one such domain $U$. Let $r> 0$ be small enough that $B_{2r}(x_0)\subset U$. Define $\rho=\eta_r\ast g$, where $\eta_r(z)= (1/r^2)\eta(z/r)$, $\eta$ the standard mollifier, and
\begin{equation*}
	g(m,n)=\begin{cases}
		\lambda(1- a),&\text{if }(m, n)\in B_r(x_0);\\
		\lambda(1+ a),&\text{otherwise.}
	\end{cases}
\end{equation*}
Finally, define a perturbation $f$ of $f_0$ by
\begin{equation*}
	f(m, n)= (m_0+\lambda (m- m_0),n_ 0+(\rho(m, n)/\lambda) (n- n_0)).
\end{equation*}
Then $x_0$ is a saddle point of $f$ and the desired conditions are satisfied for $\mathcal A= B_{2r}(x_0)$, $L= 1/(1- a)$ and $\sigma= 1+ 2a$. The only non-trivial condition is (f3). To show it, note that


\begin{equation*}
	\rho(x)-\rho(y)=\int_S \frac{2a}{\lambda(1- a^2)}\eta_r(z)\,dz-\int_{S'}\frac{2a}{\lambda(1- a^2)}\eta_r(z)\,dz,
\end{equation*}
where $S=\{z\in\mathbb R^2: x- z\in B_r(x_ 0), y- z\notin B_r(x_ 0)\}$ and $S'=\{z\in\mathbb R^2: y- z\in B_r(x_ 0), x- z\notin B_r(x_ 0)\}$. Take $x, y\in\mathbb R^2$ and write $|x- y|= qr$, $A_q=\{z\in\mathbb R^2: 1- q< |z|< 1\}$. We have
\begin{equation*}
	\frac{|\rho(x)-\rho(y)|}{|x- y|^\zeta}\leq\frac{2a\eta_r(S)}{\lambda(1- a^2)q^\zeta r^\zeta}\leq\frac{2a\eta(A_q)/q^\zeta}{\lambda(1- a^2)}.
\end{equation*}
Since $N=\sup_{q> 0}\eta(A_q)/q^\zeta< +\infty$, we can take $a$ so small that $2aN/(1- a)<\epsilon$, therefore $H_\zeta(\rho)<\epsilon\inf\rho$.
\end{example}

\subsection{A Lasota-Yorke Inequality for $\func{P}_f$}

In this section, we provide a Lasota-Yorke inequality for the Perron-Frobenius Operator of $f$, acting on the space of H\"older functions. The proof follows from a general approach, where such a linear operator has spectral gap.

\begin{lemma}\label{contracaoimplicaly}
	Let $(B_w,\|\cdot\|_w)$ and $(B_s,\|\cdot\|_s)$ be two Banach spaces, with $B_s\subset B_w$ and let $L: B_w\to B_w$ be a bounded linear operator s.t. $L(B_s)\subset B_s$. Suppose there exists a projection $P: B_w\to B_w^1\subset B_s$, bounded with respect to the norms $||\cdot ||_w$ and $||\cdot ||_{B_w \rightarrow B_s}$, such that $L(B_w^1)\subset B_w^1$.
	
	If there exist $q< 1$ and $Q\in\mathbb{R}$ such that
	\begin{equation}\label{contracao}
	\forall n\in \mathbb{N}: \|L^n|_{B_s\cap B_w^0}\|_s\leq Qq^n,
	\end{equation}
	where $B_w^0:= P^{-1}(0)$, then there exists $N\in\mathbb{N}$, $r< 1$ and $R\in\mathbb{R}$ s.t.
	\begin{equation}\label{lasota-yorke}
	\|L^N g\|_s\leq r\|g\|_s + R\|g\|_w
	\end{equation} 
	for all $g\in B_s$.
\end{lemma}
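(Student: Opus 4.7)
The plan is to split each $g \in B_s$ via the projection, writing $g = Pg + (I-P)g$. The summand $Pg$ lies in $B_w^1 \subset B_s$, and by the boundedness of $P : B_w \to B_s$ (with some constant $C$), we have $\|Pg\|_s \leq C\|g\|_w$ automatically, without needing any information on $\|g\|_s$. The summand $(I-P)g$ lies in $B_s \cap B_w^0$ (since $P^2 = P$) and satisfies $\|(I-P)g\|_s \leq \|g\|_s + C\|g\|_w$. The first piece will be handled by the invariance $L(B_w^1) \subset B_w^1$; the second by the contraction hypothesis (\ref{contracao}).

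Applying $L^N$ and using the triangle inequality,
\begin{equation*}
\|L^N g\|_s \leq \|L^N(Pg)\|_s + \|L^N(I-P)g\|_s.
\end{equation*}
Since $(I-P)g \in B_s \cap B_w^0$, the second term is controlled by $Qq^N(\|g\|_s + C\|g\|_w)$ directly from (\ref{contracao}). For the first term, $L^N(Pg)$ remains in the closed subspace $B_w^1$, and I would argue that on $B_w^1$ the norms $\|\cdot\|_s$ and $\|\cdot\|_w$ are equivalent: one direction follows from $h = Ph$ together with $\|Ph\|_s \leq C\|h\|_w$ for $h \in B_w^1$, and the other from the (standard) continuous embedding $B_s \hookrightarrow B_w$. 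Hence $L$ acts boundedly on $(B_w^1,\|\cdot\|_s)$ with some norm $M$, yielding $\|L^N(Pg)\|_s \leq CM^N\|g\|_w$. Combining the two bounds,
\begin{equation*}
\|L^N g\|_s \leq Qq^N\|g\|_s + C(M^N + Qq^N)\|g\|_w,
\end{equation*}
and choosing $N$ large enough that $r := Qq^N < 1$ closes the argument, with $R := C(M^N + Qq^N)$.

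The main obstacle I expect is the norm equivalence on $B_w^1$, which is what guarantees that $M$ is finite (for the fixed $N$ chosen, the growth factor $M^N$ is allowed to blow up, since only the contraction factor $Qq^N$ must be less than $1$). Everything else is a routine combination of the projection identity $P^2 = P$, the triangle inequality, and the contraction hypothesis (\ref{contracao}).
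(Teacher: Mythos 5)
Your decomposition $g = Pg + (I-P)g$, the observation that $(I-P)g \in B_s\cap B_w^0$, and the choice of $N$ so that $Qq^N<1$ are all exactly the paper's argument. The only place where you deviate is in bounding $\|L^N Pg\|_s$: you route this through asserting that $\|\cdot\|_s$ and $\|\cdot\|_w$ are \emph{equivalent} on $B_w^1$ and then introducing $M$, the operator norm of $L|_{B_w^1}$ in the $s$-norm. The direction $\|h\|_s\leq C\|h\|_w$ on $B_w^1$ indeed follows from $h=Ph$ and the $(B_w,B_s)$-boundedness of $P$. But the reverse direction $\|h\|_w\lesssim\|h\|_s$ — which is what makes $M$ finite — requires a continuous embedding $B_s\hookrightarrow B_w$, and this is \emph{not} among the lemma's stated hypotheses. (It is assumed in the subsequent Lemma \ref{corolariocontracaoimplicaly}, and it holds in the application where $B_w=C^0$, $B_s=H_\zeta$, but the present lemma is stated without it.) So as written there is a small gap.

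The fix is to drop the norm-equivalence step entirely, as the paper does: since $L(B_w^1)\subset B_w^1$ one has $L^N Pg = P L^N Pg$, hence
\begin{equation*}
\|L^N Pg\|_s = \|P L^N Pg\|_s \leq \|P\|_{B_w\to B_s}\,\|L^N Pg\|_w \leq \|P\|_{B_w\to B_s}\,\|L^N\|_w\,\|P\|_w\,\|g\|_w,
\end{equation*}
which uses only the $B_w$-boundedness of $L$ and $P$ and the $(B_w,B_s)$-boundedness of $P$, exactly the hypotheses given. This yields $R=\|P\|_{B_w\to B_s}\bigl(Qq^N+\|L^N\|_w\|P\|_w\bigr)$, matching the paper. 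In short: same decomposition, same key contraction estimate, same choice of $N$; your extra step through an auxiliary constant $M$ smuggles in a hypothesis you do not have, but the direct bound via $P L^N Pg = L^N Pg$ removes the need for it.
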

\begin{proof}
	For each $g\in B_s$ and $n\in\mathbb{N}$ we have that
	\begin{equation*}
	\|L^n(g- Pg)\|_s\leq Qq^n\|g- Pg\|_s.
	\end{equation*}
	by the triangle inequality,
	\begin{gather*}
	\|g- Pg\|_s\leq\|g\|_s+\|Pg\|_s;\\
	\|L^n g\|_s\leq\|L^n(g- Pg)\|_s+\|L^n Pg\|_s.
	\end{gather*}
	Since $L(B_w^1)\subset B_w^1$, we get that
	\begin{align*}
	\|L^n g\|_s&\leq Qq^n(\|g\|_s+\|Pg\|_s)+ \|P L^n Pg\|_s.
	\end{align*}
	Letting $n= N\in\mathbb{N}$ large enough so that $Qq^N< 1$, we obtain \eqref{lasota-yorke} with $r= Qq^N$ and $R=\|P\|_{B_w\to B_s}(Qq^N+ \|L^N\|_w\|P\|_w)$.
\end{proof}
\begin{lemma}\label{corolariocontracaoimplicaly}
	Let $(B_w,\|\cdot\|_w)$ and $(B_s,\|\cdot\|_s)$ be Banach spaces, with $B_s\subset B_w$ where $\iota: B_s\to B_w$ is bounded and let $L: B_w\to B_w$ be a bounded linear operator s.t. $L(B_s)\subset B_s$ and $\sup_{n\in \mathbb{N}}\|L^n\|_w<\infty$. Assume that there exists a decomposition $B_w= B_w^0\oplus B_w^1$ into invariant subspaces $B_w^0$ and $B_w^1$ such that the inequality 
	
	\begin{equation}\label{contracao1}
	\forall n\in \mathbb{N}: \|L^n|_{B_s\cap B_w^0}\|_s\leq Qq^n,
	\end{equation} holds for some $q< 1$ and $Q\in\mathbb{R}$. Suppose that $L|_{B_w^1}$ is diagonalizable with unit spectrum, $B_w^1\subset B_s$ and $\dim B_w^1<\infty$. Then there exist $N\in\mathbb{N}$, $r< 1$ and $R\in\mathbb{R}$ such that \begin{equation}\label{lasota-yorke2}
	\|L^N g\|_s\leq r\|g\|_s + R\|g\|_w
	\end{equation} for all $g\in B_s$.
\end{lemma}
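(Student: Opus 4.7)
The plan is to reduce this corollary directly to Lemma \ref{contracaoimplicaly} by exhibiting the projection that lemma requires. I would take $P: B_w \to B_w$ to be the linear projection onto $B_w^1$ along $B_w^0$ determined by the given invariant direct sum decomposition, so that $P(B_w) = B_w^1 \subset B_s$, $P^{-1}(0) = B_w^0$, and $L(B_w^1) \subset B_w^1$ holds by invariance. Hypothesis \eqref{contracao1} of the corollary is verbatim the hypothesis \eqref{contracao} of the lemma, so once $P$ is verified to be bounded both with respect to $\|\cdot\|_w$ and as a map $B_w \to B_s$, Lemma \ref{contracaoimplicaly} delivers \eqref{lasota-yorke2} immediately.

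The boundedness $\|P\|_{B_w \to B_s} < \infty$ is essentially automatic: $\mathrm{range}(P) = B_w^1$ is finite-dimensional and contained in $B_s$, so all norms on $B_w^1$ are equivalent and the inclusion $(B_w^1,\|\cdot\|_w) \hookrightarrow (B_s,\|\cdot\|_s)$ is continuous, which together with $\|P\|_w < \infty$ yields $\|P\|_{B_w \to B_s} \leq C \|P\|_w$ for some constant $C > 0$. The non-trivial point is therefore the $\|\cdot\|_w$-continuity of $P$, i.e., that the algebraic decomposition $B_w^0 \oplus B_w^1$ is also topological. Since $B_w^1$ is finite-dimensional it is automatically closed, and I would argue that $B_w^0$ is closed as well: the combination of $\sup_n \|L^n\|_w < \infty$, the exponential contraction on $B_s \cap B_w^0$, and the unit spectrum on the finite-dimensional $B_w^1$ is the standard setting of a spectral decomposition of $L$, with $B_w^0$ the closed spectral subspace complementary to $B_w^1$. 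The open mapping theorem applied to the continuous bijection $B_w^0 \oplus B_w^1 \to B_w$ then produces continuity of $P$ in $\|\cdot\|_w$.

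The main obstacle is precisely this verification that $B_w^0$ is closed in $B_w$, since without it the projection hypothesis of Lemma \ref{contracaoimplicaly} fails and the argument collapses. One way to avoid invoking spectral calculus would be to check closedness by hand: given $\varphi_n \in B_w^0$ with $\varphi_n \to \varphi$ in $\|\cdot\|_w$, decompose the limit as $\varphi = \varphi^0 + \varphi^1$ with $\varphi^j \in B_w^j$ and use the dynamical characterization of $B_w^0$ inherited from \eqref{contracao1} together with the isometric action of $L$ on $B_w^1$ to force $\varphi^1 = 0$. Once this closedness is in hand, everything else assembles routinely: Lemma \ref{contracaoimplicaly} yields \eqref{lasota-yorke2} with, for any $N$ sufficiently large that $Q q^N < 1$, the constants $r = Q q^N$ and $R = \|P\|_{B_w \to B_s}(Q q^N + \|L^N\|_w \|P\|_w)$, matching the conclusion of that lemma.
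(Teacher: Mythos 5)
Your reduction to Lemma \ref{contracaoimplicaly} via the algebraic projection $P$ onto $B_w^1$ along $B_w^0$ is the right target, and you correctly locate the crux: the boundedness of $P$. But the two routes you sketch for closedness of $B_w^0$ do not go through from the stated hypotheses, and this is a genuine gap, not a detail to be filled in. The contraction hypothesis \eqref{contracao1} controls $L^n$ on $B_s\cap B_w^0$ only in the $\|\cdot\|_s$ norm; it says nothing about the $\|\cdot\|_w$-behaviour of $L$ on all of $B_w^0$. So the ``spectral decomposition'' picture in which $B_w^0$ is the closed spectral complement of $B_w^1$ in $B_w$ is not available: to invoke it you would need to know the spectrum of $L|_{B_w^0}$ acting on $(B_w,\|\cdot\|_w)$ avoids the unit circle, and the hypotheses give no such thing. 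Your hands-on alternative has the same problem: for a sequence $\varphi_n\in B_w^0$ converging in $\|\cdot\|_w$, the ``dynamical characterization inherited from \eqref{contracao1}'' is a statement about the $\|\cdot\|_s$ norm on $B_s\cap B_w^0$, and a general element of $B_w^0$ need not lie in $B_s$ nor be $\|\cdot\|_w$-approximable by elements of $B_s\cap B_w^0$. Without a density assumption, the argument does not close.

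The paper avoids this issue entirely, and inspecting its proof of Lemma \ref{contracaoimplicaly} shows why that is enough: the projection there is only ever applied to $g\in B_s$, so what is really needed is the bound $\|Pg\|_s\le C\|g\|_w$ for $g\in B_s$, not $\|\cdot\|_w$-continuity of $P$ on all of $B_w$ (equivalently, closedness of $B_w^0$). The paper proves exactly this restricted bound directly. Fix a basis of eigenvectors $\{h_1,\dots,h_d\}$ of $B_w^1$ and the isomorphism $A:B_w^1\to\mathbb{R}^d$. Because $L|_{B_w^1}$ is diagonalizable with unit eigenvalues, $\|AL^n g^1\|_2=\|Ag^1\|_2$, hence all norms $\|L^ng^1\|_i$ for $g^1\in B_w^1$ are comparable to $\|g^1\|_j$, with constants $M_{i,j}$ independent of $n$. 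Then for $g\in B_s$,
\begin{equation*}
\|Pg\|_s\le M_{w,s}\|L^nPg\|_w\le M_{w,s}\bigl(\|L^ng\|_w+\|L^n(g-Pg)\|_w\bigr)\le M_{w,s}\Bigl(\sup_n\|L^n\|_w\|g\|_w+\|\iota\|\,Qq^n\|g-Pg\|_s\Bigr),
\end{equation*}
and letting $n\to\infty$ kills the second term, giving $\|Pg\|_s\le M_{w,s}\sup_n\|L^n\|_w\,\|g\|_w$; the $\|\cdot\|_w$-bound on $B_s$ then follows via $\iota$. This uses only the given hypotheses and sidesteps the closedness question you were stuck on. To repair your proposal you would either have to add a density or spectral hypothesis to justify closedness of $B_w^0$, or replace that step with the direct estimate above.
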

\begin{proof}
	Take a basis of eigenvectors $\{h_1,\ldots, h_d\}$ of $B_w^1$ and consider the linear isomorphism
	\begin{align*}
	A\colon B_w^1&\to\mathbb{R}^d\\
	\sum_{i= 1}^d\alpha_i h_i&\mapsto(\alpha_1,\ldots,\alpha_d).
	\end{align*}
	Denote by $\|\cdot\|_2$ the Euclidian norm in $\mathbb{R}^d$. Since $L|_{B_w^1}$, has unit spectrum and $\{h_1,\ldots, h_d\}$ is a basis of eigenvectors, we have that
	\begin{equation*}
	\forall n\in\mathbb{N}:\|AL^n g^1\|_2=\|Ag^1\|_2
	\end{equation*}
	for all $g^1\in B_w^1$. Therefore, for any $n\in\mathbb{N}$, $g^1\in B_w^1$ and $i,j\in\{w,s\}$,
	\begin{equation*}
	\frac 1{M_{i,j}}\leq\frac{\|L^n g^1\|_i}{\|g^1\|_j}\leq M_{i,j}:=\|A^{-1}\|_{i,2}\|A\|_{2,j}.
	\end{equation*}
	Thus, the projection $P: B_w\to B_w^1$ satisfies
	\begin{align*}
	\|Pg\|_s&\leq \|L^n Pg\|_wM_{w,s}\\
	&\leq \|L^n g\|_wM_{w,s} + \|L^n(g- Pg)\|_wM_{w,s}\\
	&\leq \sup_{n\in\mathbb{N}}\|L^n\|_w\|g\|_w M_{w,s}+ \|\iota\|_{+,*}\|L^n(g- Pg)\|_sM_{w,s}\\
	&\leq \sup_{n\in\mathbb{N}}\|L^n\|\|g\|_wM_{w,s}+ \|\iota\|_{+,*}Qq^n\|g\|_sM_{w,s}.
	\end{align*}
	Letting $n\to +\infty$,
	\begin{equation*}
	\|Pg\|_s\leq \sup_{n\in\mathbb{N}}\|L^n\|\|g\|_wM_{w,s}.
	\end{equation*}
	Thus, the projections $P$ is $(B_w, B_s)$-bounded and so $B_w$-bounded since $\iota$ is continuous. Hence, the hypothesis of Lemma \ref{contracaoimplicaly} are satisfied.
\end{proof}
	
\begin{theorem}\label{asewqtw} (Lasota-Yorke inequality) There exist $k\in \mathbb{N}$, $%
	0<\beta _{0}<1$ and $C>0$ such that, for all $g\in H_\zeta$, it holds 
	\begin{equation}
	|\func{P}_{f}^{k}g|_{\zeta}\leq \beta _{0}|g|_{\zeta}+C|g|_{\infty},  \label{LY1}
	\end{equation}where $|g|_\zeta := H_\zeta (g) + |g|_{\infty}$.
	
\end{theorem}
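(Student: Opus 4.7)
The strategy is to reduce Theorem \ref{asewqtw} to the abstract Lasota--Yorke argument encoded in Lemma \ref{corolariocontracaoimplicaly}. I would take $B_w = C(M)$ equipped with $|\cdot|_\infty$ and $B_s = H_\zeta$ equipped with $|\cdot|_\zeta$, set $L = \func{P}_f$, and decompose $B_w = B_w^0 \oplus B_w^1$ using the $f$-invariant probability $m_1$ supplied by the results quoted in Section \ref{hf}: namely, $B_w^1 = \mathbb R\cdot \mathbf 1$ and $B_w^0 = \{g\in C(M) : \int g\, dm_1 = 0\}$.

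The first step is to check invariance of this splitting under $\func{P}_f$. Duality and $f$-invariance of $m_1$ give $\int \func{P}_f g\, dm_1 = \int g\, dm_1$, so $B_w^0$ is preserved. Because $\rho = 1/|\det f'|$ is the Jacobian of $f$ with respect to the invariant measure $m_1$, one has $\func{P}_f \mathbf 1 = \mathbf 1$; hence $B_w^1$ is fixed pointwise and $L|_{B_w^1}$ is diagonalisable with unit spectrum. Positivity of $\func{P}_f$ combined with $\func{P}_f \mathbf 1 = \mathbf 1$ yields $|\func{P}_f^n g|_\infty \leq |g|_\infty$ for every $n\in\mathbb N$, so $\sup_n \|L^n\|_w \leq 1$.

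Next, I would verify the exponential contraction on the strong subspace $B_s \cap B_w^0$. This is precisely the content of Theorem \ref{loiub}: for any $\varphi \in H_\zeta$ with $\int \varphi\, dm_1 = 0$ one has $|\func{P}_f^n \varphi|_\zeta \leq D r^n |\varphi|_\zeta$, which is exactly \eqref{contracao1} with $q = r$ and $Q = D$. To legitimately feed these ingredients into Lemma \ref{corolariocontracaoimplicaly}, I also need $\func{P}_f(B_s) \subset B_s$; this follows from the H\"older control on $\rho$ built into (f3) together with the Lipschitz control on inverse branches of $f$ from (f1), using the Perron--Frobenius formula recalled in Remark \ref{chkjg}. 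Once all hypotheses are verified, Lemma \ref{corolariocontracaoimplicaly} produces an integer $N$ and constants $r' < 1$, $R > 0$ with $|\func{P}_f^N g|_\zeta \leq r' |g|_\zeta + R |g|_\infty$; the theorem then follows by setting $k = N$, $\beta_0 = r'$, and $C = R$.

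The main obstacle, if any, should be only bookkeeping: verifying that $\func{P}_f$ genuinely sends $H_\zeta$ into itself is elementary but mildly delicate because of the two regimes $\mathcal{A}$ and $\mathcal{A}^c$ with different expansion behaviour. Crucially, Lemma \ref{corolariocontracaoimplicaly} does not ask for quantitative control of $\|\func{P}_f\|_s$: only boundedness of $L$ on $B_s$ and the exponential contraction on the codimension-one subspace $B_w^0 \cap B_s$ are required, and both are already built into hypotheses (f1)--(f3) and into Theorem \ref{loiub}.
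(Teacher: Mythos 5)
Your proposal is correct and follows essentially the same path as the paper: both reduce the claim to Lemma \ref{corolariocontracaoimplicaly} with $B_w=(C^0,|\cdot|_\infty)$, $B_s=(H_\zeta,|\cdot|_\zeta)$, the splitting $B_w^0=\{\varphi:\int\varphi\,dm_1=0\}$, $B_w^1=\mathbb R\cdot\mathbf 1$, and invoke Theorem \ref{loiub} to supply the contraction hypothesis \eqref{contracao1}. The only difference is cosmetic: you spell out the verification of the lemma's hypotheses (invariance of the splitting, $\func{P}_f\mathbf 1=\mathbf 1$, $\sup_n\|L^n\|_w\le 1$, and $\func{P}_f(H_\zeta)\subset H_\zeta$) somewhat more explicitly than the paper does.
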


\begin{proof}	
Let $(B_w,\|\cdot\|_w)= (C_0, |\cdot|_\infty)$, $(B_s,\|\cdot\|_s)=(H_\zeta,|\cdot|_\zeta)$, $L=\func{P}_f$, and $f(\varphi)=\int\varphi dm_1$. The inclusion $\iota: B_s\to B_w$ is bounded and $L: B_w\to B_w$ is a bounded linear operator such that $L(B_s)\subset B_s$. $h= 1$ satisfies $Lh= h\neq 0$, $f: B_w\to\mathbb{R}$ is bounded and $f^{-1}(0)=\{\varphi\in H_0:\int\varphi dm_1= 0\}$. Take $B_w^0= f^{-1}(0)$ and $B_w^1=\mathbb{R} h$ in Lemma \ref{corolariocontracaoimplicaly}. Since Theorem \ref{loiub} implies condition \eqref{contracao1}, the hypothesis of the Lemma are satisfied.
\end{proof}

The following is a standard consequence of Theorem \ref{asewqtw} (and the fact that $1$ is a fixed point for $\func{P}_{f}$) that allows us to estimate
the behaviour of any given power of the transfer operator. 

\begin{corollary}\label{irytrtrte}
	There exist constants $B_3>0$, $C_2>0$ and $0<\beta_2<1$ such that for all $%
	g \in H_\zeta$, and all $n \geq 1$, it holds
	
	\begin{equation}
	|\func{P}_{f}^{n}g|_{\zeta} \leq B_3 \beta _2 ^n | g|_{\zeta} + C_2|g|_{\infty},
	\label{lasotaiiii}
	\end{equation}where $|g|_\zeta := H_\zeta (g) + |g|_{\infty}$.
\end{corollary}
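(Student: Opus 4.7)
\textbf{Proof plan for Corollary \ref{irytrtrte}.} The plan is a routine iteration of the Lasota--Yorke inequality \eqref{LY1}, using the invariance of $m_1$ to propagate the $|\cdot|_\infty$ term. The key observation is the following $|\cdot|_\infty$-contraction: since $m_1$ is $f$-invariant, $\func{P}_f 1 = 1$, and because $\func{P}_f$ is a positive operator, $|\func{P}_f h|_\infty \leq \func{P}_f(|h|)(\cdot)\leq |h|_\infty \cdot \func{P}_f 1=|h|_\infty$ for every bounded measurable $h$. Iterating, $|\func{P}_f^j g|_\infty \leq |g|_\infty$ for every $j\geq 0$.

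Next, I would iterate the inequality \eqref{LY1} along multiples of $k$. Applying \eqref{LY1} to $\func{P}_f^{(m-1)k}g$ in place of $g$ and using the contraction above, an induction on $m$ yields
$$|\func{P}_f^{mk} g|_\zeta \;\leq\; \beta_0^{m}|g|_\zeta + C\bigl(1+\beta_0+\cdots+\beta_0^{m-1}\bigr)|g|_\infty \;\leq\; \beta_0^{m}|g|_\zeta + \frac{C}{1-\beta_0}|g|_\infty.$$
For an arbitrary $n\geq 1$, I would write $n = mk + r$ with $0\leq r < k$ and $m\geq 0$, and apply the previous bound to $\func{P}_f^r g$. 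This requires controlling $|\func{P}_f^r g|_\zeta$ for the finitely many exponents $r=0,1,\ldots,k-1$: since $f$ is a local diffeomorphism and $\rho$ is $\zeta$-H\"older by \eqref{f32}, the operator $\func{P}_f$ is bounded on $(H_\zeta,|\cdot|_\zeta)$, so there exists $B>0$ with $|\func{P}_f^r g|_\zeta \leq B^r|g|_\zeta\leq B^k|g|_\zeta$. Combining this with the iterated estimate gives
$$|\func{P}_f^n g|_\zeta \;\leq\; \beta_0^{m} B^k\,|g|_\zeta + \frac{C}{1-\beta_0}|g|_\infty.$$

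Finally, I would choose the constants: set $\beta_2 := \beta_0^{1/k}\in(0,1)$, $B_3 := B^k/\beta_0$ and $C_2 := C/(1-\beta_0)$. Since $m = (n-r)/k$ and $0\leq r<k$, we have $\beta_0^{m} = \beta_2^{n-r}\leq \beta_0^{-1}\beta_2^{n}$, so $\beta_0^{m}B^k \leq B_3\beta_2^{n}$, yielding \eqref{lasotaiiii}. The only mildly delicate point in the whole argument is the $H_\zeta$-boundedness of $\func{P}_f$, which however is a standard computation using the explicit branch-wise formula from Remark \ref{chkjg} together with (f1) and the H\"older regularity of $\rho$ from (f3); the rest is bookkeeping.
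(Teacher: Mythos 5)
Your argument is correct and follows exactly the route the paper has in mind: the paper omits the proof but explicitly identifies Theorem \ref{asewqtw} together with $\func{P}_f 1 = 1$ as the ingredients, which is precisely your iteration scheme. The one point that deserves to be flagged as more than pure bookkeeping is the $H_\zeta$-boundedness of $\func{P}_f$ needed to absorb the remainder $r<k$: this does hold, via the branch-wise formula of Remark \ref{chkjg}, (f1) and (f3), but since the inverse branches are only \emph{locally} $L$-Lipschitz one needs the usual device of bounding the H\"older quotient by $2|h|_\infty/\varepsilon^\zeta$ for pairs of points at distance $\geq\varepsilon$; you correctly flag this as routine. Incidentally, a shorter derivation bypassing Theorem \ref{asewqtw} and the division algorithm entirely is available from Theorem \ref{loiub} alone: writing $g = c + g_0$ with $c=\int g\,dm_1$ and $\int g_0\,dm_1 = 0$, one has $|g_0|_\zeta\leq 2|g|_\zeta$ and $|c|\leq|g|_\infty$, hence
\begin{equation*}
|\func{P}_f^n g|_\zeta \;\leq\; |c| + |\func{P}_f^n g_0|_\zeta \;\leq\; |g|_\infty + D r^n |g_0|_\zeta \;\leq\; 2D\,r^n|g|_\zeta + |g|_\infty,
\end{equation*}
which gives the conclusion directly with $\beta_2=r$, $B_3=2D$, $C_2=1$, and makes transparent that the constant $\beta_2$ can be taken equal to the contraction rate $r$ of Theorem \ref{loiub}.
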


\section{Weak and strong spaces\label{sec:spaces}}

\subsection{$L^{\infty}$-like spaces.}

Through this section, we construct some function spaces which are suitable
for the systems defined in section \ref{sec2}. The idea is to define spaces
of signed measures, where the norms are provided by disintegrating measures
along the stable foliation. Thus, a signed measure will be seen as a family
of measures on each leaf. For instance, a measure on the square with a
vertical foliation will be seen as a one parameter family (a path) of
measures on the interval (a stable leaf), where this identification will be
done by means of the Rokhlin's Disintegration Theorem. Finally, in the
vertical direction (on the leaves), we will consider a norm which is the
dual of the $\zeta$-H\"older norm and in the \textquotedblleft
horizontal\textquotedblright direction we will consider essentially the $%
L_{m_1}^{\infty}$ norm.

\subsubsection*{Rokhlin's Disintegration Theorem}

Now we briefly recall disintegration of measures.

Consider a probability space $(\Sigma,\mathcal{B}, \mu)$ and a partition $%
\Gamma$ of $\Sigma$ into measurable sets $\gamma \in \mathcal{B}$. Denote by $%
\pi : \Sigma \longrightarrow \Gamma$ the projection that associates to each
point $x \in M$ the element $\gamma _x$ of $\Gamma$ which contains $x$, i.e., 
$\pi(x) = \gamma _x$. Let $\widehat{\mathcal{B}}$ be the $\sigma$-algebra of 
$\Gamma$ provided by $\pi$. Precisely, a subset $\mathcal{Q} \subset \Gamma$
is measurable if, and only if, $\pi^{-1}(\mathcal{Q}) \in \mathcal{B}$. We
define the \textit{quotient} measure $\mu _x$ on $\Gamma$ by $\mu _x(%
\mathcal{Q})= \mu(\pi ^{-1}(\mathcal{Q}))$.

The proof of the following theorem can be found in \cite{Kva}, Theorem
5.1.11 (items a), b) and c)) and Proposition 5.1.7 (item d)).

\begin{theorem}
	(Rokhlin's Disintegration Theorem) Suppose that $\Sigma $ is a complete and
	separable metric space, $\Gamma $ is a measurable partition 
	of $\Sigma $ and $\mu $ is a probability on $\Sigma $. Then, $\mu $ admits a
	disintegration relative to $\Gamma $, i.e., a family $\{\mu _{\gamma
	}\}_{\gamma \in \Gamma }$ of probabilities on $\Sigma $ and a quotient
	measure $\mu _{x}$ as above, such that:
	
	\begin{enumerate}
		\item[(a)] $\mu _\gamma (\gamma)=1$ for $\mu _x$-a.e. $\gamma \in \Gamma$;
		
		\item[(b)] for all measurable set $E\subset \Sigma $ the function $\Gamma
		\longrightarrow \mathbb{R}$ defined by $\gamma \longmapsto \mu _{\gamma
		}(E), $ is measurable;
		
		\item[(c)] for all measurable set $E\subset \Sigma $, it holds $\mu (E)=\int 
		{\mu _{\gamma }(E)}d\mu _{x}(\gamma )$.

	\label{rok}
	\item [(d)] If the $\sigma $-algebra $\mathcal{B}$ on $\Sigma $ has a countable
	generator, then the disintegration is unique in the following sense. If $(\{\mu _{\gamma }^{\prime }\}_{\gamma \in \Gamma },\mu _{x})$ is another disintegration of the measure $\mu $ relative to $\Gamma $, then $\mu
	_{\gamma }=\mu _{\gamma }^{\prime }$, for $\mu _{x}$-almost every $\gamma
	\in \Gamma $.
	\end{enumerate}
\end{theorem}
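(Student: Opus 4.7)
The plan is to prove Rokhlin's theorem via the standard construction of regular conditional probabilities, exploiting the Polish structure of $\Sigma$. First I would reduce to a countable generating algebra: since $\Sigma$ is separable metric, its Borel $\sigma$-algebra $\mathcal{B}$ admits a countable algebra $\mathcal{A}_0 = \{A_n\}_{n \in \mathbb{N}}$ which generates $\mathcal{B}$. For each $A_n$, the Radon--Nikodym theorem applied to the signed measure $E \mapsto \mu(A_n \cap \pi^{-1}(E))$ on $(\Gamma, \widehat{\mathcal{B}})$, absolutely continuous with respect to $\mu_x$, yields a $\widehat{\mathcal{B}}$-measurable density $\psi_n\colon \Gamma \to [0,1]$ such that $\mu(A_n \cap \pi^{-1}(E)) = \int_E \psi_n(\gamma)\, d\mu_x(\gamma)$ for every $\widehat{\mathcal{B}}$-measurable $E$.

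Next I would show that $\{\psi_n\}_{n \in \mathbb{N}}$ defines, on a set $\Gamma_0 \subset \Gamma$ of full $\mu_x$-measure, a finitely additive probability on $\mathcal{A}_0$. The countability of $\mathcal{A}_0$ is crucial here: each of the countably many algebraic relations (additivity on disjoint unions, monotonicity, $\psi_n(\gamma) \in [0,1]$) holds for $\mu_x$-a.e. $\gamma$ separately, so by countable intersection they hold simultaneously on a full-measure $\Gamma_0$. The step I expect to be the main obstacle is upgrading finite to countable additivity: for this I would use the Polish structure of $\Sigma$ to fix a countable collection of compact sets $\{K_j\}$ that is inner-approximating for $\mu$, and then carefully choose $\mathcal{A}_0$ to refine these compacts so that one can verify, outside a $\mu_x$-null set, that any decreasing sequence $\{A_{n_k}\}$ in $\mathcal{A}_0$ with empty intersection satisfies $\psi_{n_k}(\gamma) \to 0$. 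This uses tightness essentially, and it is the technical heart of the argument.

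Once countable additivity is established on $\Gamma_0$, Carath\'eodory's extension theorem produces, for each $\gamma \in \Gamma_0$, a genuine Borel probability $\mu_\gamma$ on $\Sigma$, and the defining identity extended by monotone class arguments yields property (c) for all Borel $E$, while (b) follows from $\widehat{\mathcal{B}}$-measurability of the densities $\psi_n$ and the monotone class theorem. For property (a), namely $\mu_\gamma(\gamma) = 1$, I would again use separability: cover $\Sigma$ by countably many balls $B_r(x_i)$ of small radius, and observe that for each $i$ the set of fibers $\gamma$ meeting $B_r(x_i)$ is $\widehat{\mathcal{B}}$-measurable, so that applying (c) to $B_r(x_i) \cap \pi^{-1}(\cdot)$ one sees $\mu_\gamma$ is concentrated on $\pi^{-1}(\pi(\mathrm{supp}(\mu_\gamma))) = \gamma$ for $\mu_x$-a.e.\ $\gamma$. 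On fibers outside $\Gamma_0$ (a null set), pick any probability supported on the fiber.

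Finally for uniqueness (d), if $\{\mu_\gamma'\}$ is another disintegration, then for each $A_n$ in the countable generator both $\gamma \mapsto \mu_\gamma(A_n)$ and $\gamma \mapsto \mu_\gamma'(A_n)$ serve as Radon--Nikodym derivatives of the same measure on $\widehat{\mathcal{B}}$, hence agree $\mu_x$-a.e. Intersecting over the countably many $A_n$ gives a full-measure set on which the two families of measures agree on a generating algebra, and therefore agree as Borel probabilities. This completes the sketch; the uniqueness step is routine once the generating countable structure is in place.
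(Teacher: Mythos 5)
The paper does not prove this theorem itself; it cites it to Oliveira--Viana's textbook (\cite{Kva}, Theorem 5.1.11 and Proposition 5.1.7), so there is no in-paper argument to compare against. Your sketch follows the standard route of building regular conditional probabilities on a Polish space via Radon--Nikodym on a countable generating algebra, upgrading to countable additivity via tightness, and extending by Carath\'eodory. That overall strategy is sound, and your treatment of the technical heart (using compact inner approximation to control decreasing sequences) is the right idea, as is the uniqueness argument via the countable generator.

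There is, however, a genuine gap in your proof of property (a). You appeal only to the separability of $\Sigma$ (countably many balls $B_r(x_i)$), not to the hypothesis that $\Gamma$ is a \emph{measurable partition} in Rokhlin's sense, and your closing step $\pi^{-1}(\pi(\mathrm{supp}(\mu_\gamma))) = \gamma$ is not justified: it presupposes that $\mathrm{supp}(\mu_\gamma)$ lies in a single fiber, which is exactly what (a) asserts, so the argument is circular. The correct mechanism is to use that a measurable partition makes $\widehat{\mathcal{B}}$ countably generated and point-separating on $\Gamma$ (mod $\mu_x$). Then for each $E$ in the countable generator of $\widehat{\mathcal{B}}$, comparing $\mu(\pi^{-1}(E)) = \int \mu_\gamma(\pi^{-1}(E))\,d\mu_x$ with $\mu(\pi^{-1}(E)) = \mu_x(E) = \int \chi_E(\gamma)\,d\mu_x$ shows $\mu_\gamma(\pi^{-1}(E)) = \chi_E(\gamma)$ for $\mu_x$-a.e.\ $\gamma$; intersecting over the countably many generators and over intersections with their complements, each atom $\gamma = \bigcap_n \pi^{-1}(E_n)^{\epsilon_n}$ gets $\mu_\gamma$-mass $1$. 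Balls in $\Sigma$ do not in general produce $\widehat{\mathcal{B}}$-measurable sets and cannot replace this step. You should replace the ball-cover argument with the generator-of-$\widehat{\mathcal{B}}$ argument, and explicitly invoke the measurable-partition hypothesis (which you never use in the sketch) at this point.
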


\subsubsection{The $\mathcal{L}^{\infty}$ and $S^\infty$ spaces}

Let $\mathcal{SB}(\Sigma )$ be the space of Borel  signed measures on $\Sigma : = M \times K$. Given $\mu \in \mathcal{SB}(\Sigma )$ denote by $\mu ^{+}$ and $\mu ^{-}$
the positive and the negative parts of its Jordan decomposition, $\mu =\mu
^{+}-\mu ^{-}$ (see remark {\ref{ghtyhh}). Let $\pi _{x}:\Sigma
	\longrightarrow M$ \ be the projection defined by $\pi_x (x,y)=x$, denote
	by $\pi _{x\ast }:$}$\mathcal{SB}(\Sigma )\rightarrow \mathcal{SB}(M)${\
	the pushforward map associated to $\pi _{x}$. Denote by $\mathcal{AB}$ the
	set of signed measures $\mu \in \mathcal{SB}(\Sigma )$ such that its
	associated positive and negative marginal measures, $\pi _{x\ast }\mu ^{+}$
	and $\pi _{x\ast }\mu ^{-},$ are absolutely continuous with respect to $m_{1}$, i.e.,
	\begin{equation*}
	\mathcal{AB}=\{\mu \in \mathcal{SB}(\Sigma ):\pi _{x\ast }\mu ^{+}<<m_{1}\ \ 
	\mathnormal{and}\ \ \pi _{x\ast }\mu ^{-}<<m_{1}\}.  \label{thespace1}
	\end{equation*}%
}Given a \emph{probability measure} $\mu \in \mathcal{AB}$ on $\Sigma $,
Theorem \ref{rok} describes a disintegration $\left( \{\mu _{\gamma
}\}_{\gamma },\mu _{x}\right) $ along $\mathcal{F}^{s}$ by a family $\{\mu _{\gamma }\}_{\gamma }$ of probability measures
on the stable leaves\footnote{%
	In the following to simplify notations, when no confusion is possible we
	will indicate the generic leaf or its coordinate with $\gamma $.} and, since 
$\mu \in \mathcal{AB}$, $\mu _{x}$ can be identified with a non negative
marginal density $\phi _{x}:M\longrightarrow \mathbb{R}$, defined almost
everywhere, with $|\phi _{x}|_{1}=1$. \ For a general (non normalized)
positive measure $\mu \in \mathcal{AB}$ we can define its disintegration in
the same way. In this case, $\mu _{\gamma }$ are still probability measures, $%
\phi _{x}$ is still defined and $\ |\phi _{x}|_{1}=\mu (\Sigma )$.


\begin{definition}
	Let $\pi _{y}:\Sigma \longrightarrow K$ be the projection defined by $%
	\pi _{y}(x,y)=y$. Let $\gamma \in \mathcal{F}^{s}$, consider $\pi
	_{\gamma ,y}:\gamma \longrightarrow K$, the restriction of the map $\pi
	_{y}:\Sigma \longrightarrow K$ to the vertical leaf $\gamma $, and the
	associated pushforward map $\pi _{\gamma ,y\ast }$. Given a positive measure 
	$\mu \in \mathcal{AB}$ and its disintegration along the stable leaves $%
	\mathcal{F}^{s}$, $\left( \{\mu _{\gamma }\}_{\gamma },\mu _{x}=\phi
	_{x}m_{1}\right) $, we define the \textbf{restriction of $\mu $ on $\gamma $}
	and denote it by $\mu |_{\gamma }$ as the positive measure on $K$ (not
	on the leaf $\gamma $) defined, for all mensurable set $A\subset K$, as 
	\begin{equation*}
	\mu |_{\gamma }(A)=\pi _{\gamma ,y\ast }(\phi _{x}(\gamma )\mu _{\gamma
	})(A).
	\end{equation*}%
	For a given signed measure $\mu \in \mathcal{AB}$ and its Jordan
	decomposition $\mu =\mu ^{+}-\mu ^{-}$, define the \textbf{restriction of $%
		\mu $ on $\gamma $} by%
	\begin{equation*}
	\mu |_{\gamma }=\mu ^{+}|_{\gamma }-\mu ^{-}|_{\gamma }.
	\end{equation*}%
	\label{restrictionmeasure}
\end{definition}

\begin{remark}
	\label{ghtyhh}As proved in Appendix 2 of \cite {GLu}, the restriction $%
	\mu |_{\gamma }$ does not depend on the decomposition. Precisely, if $\mu
	=\mu _{1}-\mu _{2}$, where $\mu _{1}$ and $\mu _{2}$ are any positive
	measures, then $\mu |_{\gamma }=\mu _{1}|_{\gamma }-\mu _{2}|_{\gamma }$ $%
	m_{1}$-a.e. $\gamma \in M$. 
\end{remark}

Let $(X,d)$ be a compact metric space, $g:X\longrightarrow \mathbb{R}$ be a
$\zeta$-H\"older function and let $H_\zeta(g)$ be its best $\zeta$-H\"older's constant, i.e., 
\begin{equation}\label{lipsc}
\displaystyle{H_\zeta(g)=\sup_{x,y\in X,x\neq y}\left\{ \dfrac{|g(x)-g(y)|}{d(x,y)^\zeta}%
	\right\} }.
\end{equation}

In what follows, we generalize the definition of the Wasserstein-Kantorovich-like metric given in \cite{GLu} and \cite{GP}. This generalization allows us to obtain exponential decay of correlations on the set of $\zeta$-H\"older, instead of Lipschitz, functions. 
\begin{definition}
	Given two signed measures $\mu $ and $\nu $ on $X,$ we define a \textbf{\
		Wasserstein-Kantorovich-like} distance between $\mu $ and $\nu $ by 
	\begin{equation*}
	W_{1}^{\zeta}(\mu ,\nu )=\sup_{H_\zeta(g)\leq 1,|g|_{\infty }\leq 1}\left\vert \int {\
		g}d\mu -\int {g}d\nu \right\vert .
	\end{equation*}%
	\label{wasserstein}
\end{definition}

Since the constant $\zeta$ is fixed, from now on we denote%
\begin{equation}
||\mu ||_{W}:=W_{1}^{\zeta}(0,\mu ).  \label{WW}
\end{equation}%
As a matter of fact, $||\cdot ||_{W}$ defines a norm on the vector space of
signed measures defined on a compact metric space. It is worth to remark
that this norm is equivalent to the dual of the $\zeta$-H\"older norm. 

Other applications of this metric to obtain limit theorems can be seen in \cite{ben} and \cite {LiLu}. For instance, in \cite {ben} the author apply this metric to a more general case of shrinking fibers systems.

\begin{definition}
	Let $\mathcal{L}^{\infty }\subseteq \mathcal{AB}(\Sigma )$ be defined as%
	\begin{equation*}
	\mathcal{L}^{\infty }=\left\{ \mu \in \mathcal{AB}:\esssup ({W_{1}^{\zeta}(\mu
		^{+}|_{\gamma },\mu ^{-}|_{\gamma }))}<\infty \right\},
	\end{equation*}%
	where the essential supremum is taken over $M$ with respect to $m_{1}$.
	Define the function $||\cdot ||_{\infty }:\mathcal{L}^{\infty
	}\longrightarrow \mathbb{R}$ by%
	\begin{equation*}
	||\mu ||_{\infty }=\esssup ({W_{1}^{\zeta}(\mu ^{+}|_{\gamma },\mu ^{-}|_{\gamma
		}))}.
	\end{equation*}
\end{definition}

Finally, consider the following set of signed measures on $\Sigma $%
\begin{equation}\label{sinfi}
S^{\infty }=\left\{ \mu \in \mathcal{L}^{\infty };\phi _{x}\in
H_\zeta \right\},
\end{equation}%
and the function, $||\cdot ||_{S^{\infty }}:S^{\infty }\longrightarrow 
\mathbb{R}$, defined by%
\begin{equation*}
||\mu ||_{S^{\infty }}=|\phi _{x}|_{\zeta}+||\mu ||_{\infty }.
\end{equation*}

The proof of the next proposition is straightforward and can be found in 
\cite{L}.

\begin{proposition}
	$\left( \mathcal{L}^{\infty },||\cdot ||_{\infty }\right) $ and $\left(
	S^{\infty },||\cdot||_{S^{\infty }}\right) $ are normed vector spaces.
\end{proposition}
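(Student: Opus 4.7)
The statement is routine once one has the linearity of the restriction map in hand, so the plan is to reduce everything to Remark~\ref{ghtyhh} plus the fact that $\|\cdot\|_W$ is already a norm on the signed measures on $K$.

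First I would observe that $\mathcal{AB}$ is visibly a vector subspace of $\mathcal{SB}(\Sigma)$: the Jordan decomposition $\mu=\mu^+-\mu^-$ writes $\mu$ as a difference of two positive measures with marginals $\ll m_1$, and for any decomposition $\mu = \mu_1 - \mu_2$ as a difference of positive measures one has $\mu^+ \leq \mu_1$ and $\mu^- \leq \mu_2$, so the class is preserved under scalar multiplication and addition. The key point is then Remark~\ref{ghtyhh}: for $\mu\in\mathcal{AB}$ and any representation $\mu=\nu_1-\nu_2$ with $\nu_1,\nu_2$ positive in $\mathcal{AB}$, one has $\mu|_\gamma=\nu_1|_\gamma-\nu_2|_\gamma$ for $m_1$-a.e.~$\gamma$. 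In particular, the ``restriction map'' $\mu\mapsto\mu|_\gamma$ is \emph{linear} a.e., and $W_1^\zeta(\mu^+|_\gamma,\mu^-|_\gamma) = \|\mu|_\gamma\|_W$ where on the right we view $\mu|_\gamma:=\mu^+|_\gamma-\mu^-|_\gamma$ as a signed measure on $K$.

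Next I would show $\|\cdot\|_W$ is a norm on signed measures on the compact space $K$. Absolute homogeneity and subadditivity follow immediately from the definition as a supremum of linear functionals. For positive definiteness, if $\|\eta\|_W=0$ then $\int g\,d\eta=0$ for every $g$ with $H_\zeta(g)\leq 1$ and $|g|_\infty\leq 1$; by scaling this extends to every $\zeta$-H\"older function, and since $\zeta$-H\"older functions are dense in $C(K)$ (Stone--Weierstrass, using that $K$ is a compact metric space) one gets $\eta=0$ by Riesz representation.

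Now the three norm axioms for $\|\cdot\|_\infty$ on $\mathcal{L}^\infty$ fall out. Homogeneity: $\|c\mu\|_\infty = \esssup_\gamma \|c\mu|_\gamma\|_W = |c|\,\|\mu\|_\infty$ using linearity a.e.\ and the norm property on $K$. Subadditivity: $\|\mu+\nu\|_\infty=\esssup_\gamma\|\mu|_\gamma+\nu|_\gamma\|_W\leq \esssup_\gamma(\|\mu|_\gamma\|_W+\|\nu|_\gamma\|_W)\leq\|\mu\|_\infty+\|\nu\|_\infty$, which in particular proves $\mathcal{L}^\infty$ is closed under addition (and under scalar multiplication by homogeneity). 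Positive definiteness: if $\|\mu\|_\infty=0$, then $\mu^+|_\gamma=\mu^-|_\gamma$ for $m_1$-a.e.~$\gamma$ (by the norm property on $K$), whence $\int g\,d\mu^+=\int\!\!\int g\,d\mu^+|_\gamma\,dm_1=\int\!\!\int g\,d\mu^-|_\gamma\,dm_1=\int g\,d\mu^-$ for every bounded measurable $g$ on $\Sigma$ by Rokhlin's Theorem~\ref{rok}~(c), so $\mu^+=\mu^-$ and $\mu=0$.

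Finally, for $S^\infty$ I would use that $|\cdot|_\zeta=H_\zeta(\cdot)+|\cdot|_\infty$ is a norm on $H_\zeta$ (standard) and that the marginal map $\mu\mapsto \phi_x$ (the density of $\pi_{x*}\mu$) is linear on $\mathcal{AB}$. Then $\|\mu\|_{S^\infty}=|\phi_x|_\zeta+\|\mu\|_\infty$ is the sum of two seminorms, each with its own positive-definiteness; positive definiteness of the sum follows from that of $\|\cdot\|_\infty$ alone. Closure of $S^\infty$ under linear combinations is immediate from closure of $H_\zeta$ under the same. The main, and really only, subtlety throughout is the Jordan-decomposition issue in the definition of $\mu|_\gamma$; this is precisely what Remark~\ref{ghtyhh} is designed to dispose of.
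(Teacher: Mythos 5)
The paper itself gives no proof of this proposition: it states the result is ``straightforward'' and refers to the thesis \cite{L}, so there is nothing in-text against which to compare your write-up. Your argument is correct in substance and is what one expects, but there is one step you have slightly overstated: the deduction that Remark~\ref{ghtyhh} ``in particular'' gives linearity of $\mu\mapsto\mu|_\gamma$ almost everywhere.

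Remark~\ref{ghtyhh} only says that $\mu|_\gamma$ is independent of the chosen decomposition of $\mu$ into a difference of positive measures. To conclude linearity you additionally need \emph{additivity of the restriction on positive measures}: that $(\rho_1+\rho_2)|_\gamma=\rho_1|_\gamma+\rho_2|_\gamma$ for positive $\rho_1,\rho_2\in\mathcal{AB}$. This does hold, but it is a consequence of the uniqueness clause (d) of Theorem~\ref{rok}, not of Remark~\ref{ghtyhh}: one verifies that the marginal density $\phi_x^{\rho_1}+\phi_x^{\rho_2}$ together with the fiber measures
\begin{equation*}
\frac{\phi_x^{\rho_1}(\gamma)\,\rho_{1,\gamma}+\phi_x^{\rho_2}(\gamma)\,\rho_{2,\gamma}}{\phi_x^{\rho_1}(\gamma)+\phi_x^{\rho_2}(\gamma)}
\end{equation*}
(defined where the denominator is nonzero) is a disintegration of $\rho_1+\rho_2$, and uniqueness then identifies it with the disintegration, giving $(\rho_1+\rho_2)|_\gamma=\rho_1|_\gamma+\rho_2|_\gamma$ a.e. With Remark~\ref{ghtyhh}, this yields linearity for signed measures, and your homogeneity and triangle-inequality arguments for $\|\cdot\|_\infty$ go through. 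The rest is sound: $\mathcal{AB}$ is a subspace (the Jordan parts of $\mu+\nu$ are dominated by $\mu^\pm+\nu^\pm$), $\|\cdot\|_W$ is a genuine norm on signed measures on $K$ by density of $\zeta$-H\"older functions in $C(K)$ and Riesz representation, positive definiteness of $\|\cdot\|_\infty$ follows from Rokhlin (c) (where the identity you invoke should be read as $\int g\,d\mu^\pm=\int_M\bigl(\int_K g(\gamma,y)\,d(\mu^\pm|_\gamma)(y)\bigr)\,dm_1(\gamma)$, a notational slip only), and $\|\cdot\|_{S^\infty}$ is the sum of a norm and a seminorm, hence a norm.
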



\section{The transfer operator associated to $F$}

In this section, we consider the transfer operator associated to skew product
maps as defined in Section 2, acting on our disintegrated measures spaces
defined in Section 3. For such transfer operators and measures we prove a
kind of Perron-Frobenius formula, which is somewhat similar to the one used
for one-dimensional maps.

Consider the pushforward map $\func{F}_{\ast }$ associated with $F$, defined
by 
\begin{equation*}
\lbrack \func{F}_{\ast }\mu ](E)=\mu (F^{-1}(E)),
\end{equation*}%
for each signed measure $\mu \in \mathcal{SB}(\Sigma )$ and for each
measurable set $E\subset \Sigma $, where $\Sigma:=M\times K$. When $\func{F}_{\ast }$ acts on
the vector space $\mathcal{SB}(\Sigma )$ or on suitable vector subspaces of
more regular measures, $\func{F}_{\ast }$ is a linear map. For this reason,
we also call it ``transfer operator associated to $F$".

\begin{lemma}
	\label{transformula}For every probability $\mu \in \mathcal{AB}$ disintegrated
	by $(\{\mu _{\gamma }\}_{\gamma },\phi _x)$, the disintegration $(\{(\func{%
		F}_{\ast }\mu )_{\gamma }\}_{\gamma },(\func{F}_{\ast }\mu )_{x})$ of the
	pushforward $\func{F}_{\ast }\mu $ \ satisfies the following relations%
	\begin{equation}
	(\func{F}_{\ast }\mu )_{x}=\func{P}_{f}(\phi _x)m_{1}  \label{1}
	\end{equation}
	and
	\begin{equation}
	(\func{F}_{\ast }\mu )_{\gamma }=\nu _{\gamma }:=\frac{1}{\func{P}_{f}(\phi
		_x)(\gamma )}\sum_{i=1}^{\deg(f)}{\frac{\phi _x}{|\det Df_{i}|}\circ
		f_{i}^{-1}(\gamma )\cdot \chi _{f_{i}(P_{i})}(\gamma )\cdot \func{F}_{\ast
		}\mu _{f_{i}^{-1}(\gamma )}}  \label{2}
	\end{equation}
	when $\func{P}_{f}(\phi _x)(\gamma )\neq 0$. Otherwise, if $\func{P}%
	_{f}(\phi _x)(\gamma )=0$, then $\nu _{\gamma }$ is the Lebesgue measure
	on $\gamma $ (the expression $\displaystyle{\frac{\phi _x}{|\det Df_{i}|}%
		\circ f_{i}^{-1}(\gamma )\cdot \frac{\chi _{f_{i}(P_{i})}(\gamma )}{\func{P}%
			_{f}(\phi _x)(\gamma )}\cdot \func{F}_{\ast }\mu _{f_{i}^{-1}(\gamma )}}$
	is understood to be zero outside $f_{i}(P_{i})$ for all $i=1,\cdots ,\deg(f)$).
	Here and above, $\chi _{A}$ is the characteristic function of the set $A$.
\end{lemma}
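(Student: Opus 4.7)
The plan is to verify the two formulas by separately identifying the marginal $(\func{F}_{\ast}\mu)_x$ and then checking that the proposed family $\{\nu_\gamma\}_\gamma$, together with this marginal, satisfies the defining equation of a disintegration; uniqueness (Theorem \ref{rok}(d)) will then force $\nu_\gamma = (\func{F}_\ast\mu)_\gamma$ for $m_1$-a.e. $\gamma$. Throughout, I will use the partition $\mathcal{P} = \{P_1,\ldots,P_{\deg(f)}\}$ from Lemma \ref{jdjkfklfd}/Remark \ref{chkjg}, so that $f_i = f|_{P_i}$ is a diffeomorphism onto $f_i(P_i)$ and the partition covers $M$ mod $m_1$-null sets.

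For formula \eqref{1}, I would use that $F$ is a skew product, hence $\pi_x \circ F = f \circ \pi_x$. Pushing forward: $\pi_{x\ast}(\func{F}_\ast\mu) = f_\ast(\pi_{x\ast}\mu) = f_\ast(\phi_x m_1)$. By definition of the Perron-Frobenius operator $\func{P}_f$ (see Remark \ref{chkjg}), $f_\ast(\phi_x m_1) = \func{P}_f(\phi_x)\, m_1$, which gives \eqref{1}.

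For formula \eqref{2}, I would first check that each $\nu_\gamma$ is a probability on $\gamma$ where $\func{P}_f(\phi_x)(\gamma)\neq 0$. Since $F$ maps $\gamma_{f_i^{-1}(\gamma)}$ into $\gamma$, each $\func{F}_\ast \mu_{f_i^{-1}(\gamma)}$ is a probability supported on $\gamma$; summing their coefficients yields exactly $\func{P}_f(\phi_x)(\gamma)/\func{P}_f(\phi_x)(\gamma) = 1$. Then I verify the disintegration identity: for any measurable $E\subset\Sigma$,
\begin{align*}
\int \nu_\gamma(E)\, d(\func{F}_\ast\mu)_x(\gamma)
&= \int_M \sum_{i=1}^{\deg(f)} \frac{\phi_x(f_i^{-1}(\gamma))}{|\det Df_i(f_i^{-1}(\gamma))|}\,\chi_{f_i(P_i)}(\gamma)\,(\func{F}_\ast\mu_{f_i^{-1}(\gamma)})(E)\, dm_1(\gamma).
\end{align*}
Applying the change of variables $x' = f_i^{-1}(\gamma)$ in each summand (so $dm_1(\gamma) = |\det Df_i(x')|\,dm_1(x')$ and $\chi_{f_i(P_i)}(\gamma) = \chi_{P_i}(x')$), the Jacobians cancel and the sum telescopes across the partition into a single integral $\int_M \phi_x(x')(\func{F}_\ast\mu_{x'})(E)\, dm_1(x')$. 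This equals $\int \mu_{\gamma'}(F^{-1}(E))\, d\mu_x(\gamma') = \mu(F^{-1}(E)) = \func{F}_\ast\mu(E)$, by the disintegration of $\mu$ and the definition of pushforward.

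The main obstacle is bookkeeping: ensuring the change of variables is valid (which follows from $f_i$ being a diffeomorphism on $P_i$ and the non-singularity of $f$), and handling the exceptional leaves where $\func{P}_f(\phi_x)(\gamma) = 0$. On the latter set, the convention in the statement makes the summand vanish, so such leaves contribute zero to the integral against $(\func{F}_\ast\mu)_x = \func{P}_f(\phi_x)m_1$ and the choice of $\nu_\gamma$ (Lebesgue, say) is irrelevant for the disintegration identity. By Theorem \ref{rok}(d), uniqueness of the disintegration then yields \eqref{2} for $(\func{F}_\ast\mu)_x$-a.e.\ $\gamma$.
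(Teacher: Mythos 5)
Your proof is correct and follows essentially the same route as the paper: both reduce the claim to the disintegration identity via uniqueness (Theorem \ref{rok}(d)), and both verify it by a per-branch change of variables $\gamma = f_i(\beta)$ in which the Jacobian cancels against the $1/|\det Df_i|$ factor, collapsing the sum over the partition $\{P_i\}$ to the single integral $\int_M \phi_x(\beta)\,\mu_\beta(F^{-1}(E))\,dm_1(\beta) = \func{F}_\ast\mu(E)$. Your version is marginally more explicit in that it separately derives the marginal formula \eqref{1} from $\pi_x\circ F = f\circ\pi_x$ and explicitly checks that each $\nu_\gamma$ is a probability on $\gamma$ — details the paper leaves implicit — but the argument is the same.
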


\begin{proof}
	By the uniqueness of the disintegration (see Theorem \ref{rok} ) it is enough to
	prove the following equation 
	\begin{equation*}
	\func{F}_{\ast }\mu (E)=\int_{M}{\nu _{\gamma }(E\cap \gamma )}\func{P}%
	_{f}(\phi _x)(\gamma )d m_1(\gamma) ,
	\end{equation*}%
	for a measurable set $E\subset \Sigma $. For this purpose, let us define the
	set $%
	B_{1}=\left\{ \gamma ;\func{P}_{f}(\phi _x)(\gamma
	)=0\right\}$. It is not hard to see that $\phi _x m_1 (f^{-1}(B_1))=0$.
	
	Using the change of variables $\gamma =f_{i}(\beta )$ and the definition of $%
	\nu _{\gamma }$ (see (\ref{2})), we have 
	\begin{align*}
		\int_{M}{\nu _{\gamma }(E\cap \gamma )}&\func{P}_{f}(\phi _x)(\gamma
		)dm_1(\gamma)\\
		 &=\int_{B_1 ^c}{\sum_{i=1}^{\deg(f)}{\frac{\phi _x}{|\det Df_{i}|}\circ
				f_{i}^{-1}(\gamma )\cdot \chi _{f_{i}(P_{i})}(\gamma )\cdot \func{F}_{\ast
				}\mu _{f_{i}^{-1}(\gamma )}}}dm_{1}(\gamma)\\
		&=\sum_{i=1}^{\deg(f)}{\int_{f_{i}(P_{i})\cap B_{1}^c}{\ {\frac{\phi _x}{|\det
						Df_{i}|}\circ f_{i}^{-1}(\gamma )\func{F}_{\ast }\mu _{f_{i}^{-1}(\gamma
						)}(E)}}}dm_{1}(\gamma ) \\ 
		&=\sum_{i=1}^{\deg(f)}{\int_{P_{i}\cap f_{i}^{-1}(B_1^c)}{\ {\phi _x(\beta )\mu
					_{\beta }(F^{-1}(E))}}}dm_{1}(\beta ) \\
		&={\int_{f^{-1}(B_1^c)}{\ {\phi _x(\beta )\mu _{\beta }(F^{-1}(E))}}}%
		dm_{1}(\beta ) \\
		&=\int_{M}{\ {\ \mu _{\beta }(F^{-1}(E))}}d\phi _xm_{1}(\beta ) \\
		&=\mu (F^{-1}(E)) \\
		&=\func{F}_{\ast }\mu (E).&\qedhere
	\end{align*}%
\end{proof}

As said in Remark \ref{ghtyhh}, the
restriction $\mu |_{\gamma }$ does not depend on the decomposition. Thus,
for each $\mu \in \mathcal{L}^{\infty}$, since $\func{F}_{\ast }\mu $ can be
decomposed as $\func{F}_{\ast }\mu =\func{F}_{\ast }(\mu ^{+})-\func{F}%
_{\ast }(\mu ^{-})$, we can apply the above Lemma to $\func{F}_{\ast }(\mu
^{+})$ and $\func{F}_{\ast }(\mu ^{-})$ to get the following.

\begin{proposition}
	\label{niceformulaab}Let $\gamma \in \mathcal{F}^{s}$ be a stable leaf. Let
	us define the map $F_{\gamma }:K\longrightarrow K$ by 
	\begin{equation}\label{ritiruwt}
	F_{\gamma }=\pi _{y}\circ F|_{\gamma }\circ \pi _{\gamma ,y}^{-1}.
	\end{equation}%
	Then, for each $\mu \in \mathcal{L}^{\infty}$ and for almost all $\gamma \in
	M $ (interpreted as the quotient space of leaves) it holds 
	\begin{equation}
	(\func{F}_{\ast }\mu )|_{\gamma }=\sum_{i=1}^{\deg(f)}{\func{F}%
		_{\gamma _i \ast }\mu |_{\gamma _i }\rho _i(\gamma _i)\chi _{f_{i}(P_{i})}(\gamma )}\ \ m_{1}%
	\mathnormal{-a.e.}\ \ \gamma \in M  \label{niceformulaa}
	\end{equation}%
	where $\func{F}_{\gamma_i \ast }$ is the pushforward map
	associated to $\func{F}_{\gamma_i}$, $\gamma _i = f_{i}^{-1}(\gamma )$ when $\gamma \in f_i (P_i)$ and $\rho_i(\gamma)= \dfrac{1}{|\det (f_i^{'}(\gamma))|}$, where $f_i = f|_{P_i}$.
\end{proposition}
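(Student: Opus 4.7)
The plan is to reduce the statement to an identity about positive measures and then combine Lemma \ref{transformula} with Definition \ref{restrictionmeasure}. Thanks to Remark \ref{ghtyhh}, for any $\mu \in \mathcal{L}^{\infty}$ with Jordan decomposition $\mu = \mu^{+} - \mu^{-}$, restriction to a leaf is additive across the decomposition $\func{F}_*\mu = \func{F}_*\mu^{+} - \func{F}_*\mu^{-}$, so it suffices to establish the formula for a positive $\mu \in \mathcal{L}^{\infty}$ with disintegration $(\{\mu_\gamma\}, \phi_x m_1)$.

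For such a positive $\mu$, Definition \ref{restrictionmeasure} gives
\begin{equation*}
(\func{F}_*\mu)|_\gamma = \pi_{\gamma, y *}\!\bigl((\func{F}_*\mu)_x(\gamma)\cdot (\func{F}_*\mu)_\gamma\bigr).
\end{equation*}
Substituting the expressions from equations (\ref{1}) and (\ref{2}) of Lemma \ref{transformula}, the factor $\func{P}_f(\phi_x)(\gamma)$ cancels exactly the normalizer appearing in (\ref{2}). Writing $\gamma_i := f_i^{-1}(\gamma)$ for $\gamma \in f_i(P_i)$ and using $\rho_i = 1/|\det Df_i|$, one obtains
\begin{equation*}
(\func{F}_*\mu)|_\gamma = \sum_{i=1}^{\deg(f)} \rho_i(\gamma_i)\,\phi_x(\gamma_i)\,\chi_{f_i(P_i)}(\gamma)\,\pi_{\gamma, y *}\func{F}_* \mu_{\gamma_i},
\end{equation*}
for $m_1$-almost every $\gamma$; the set where $\func{P}_f(\phi_x)(\gamma) = 0$ is harmless because both sides vanish there, by the same absolute continuity argument used in the proof of Lemma \ref{transformula}.

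The remaining step is to identify $\phi_x(\gamma_i)\,\pi_{\gamma, y *}\func{F}_* \mu_{\gamma_i}$ with $\func{F}_{\gamma_i *}\mu|_{\gamma_i}$. This reduces to checking the commutation
\begin{equation*}
\pi_{\gamma, y} \circ F|_{\gamma_i} = F_{\gamma_i}\circ \pi_{\gamma_i, y},
\end{equation*}
which follows directly from (\ref{ritiruwt}) together with $F(\gamma_i)\subset \gamma$ (since $F(x,z)=(f(x),G(x,z))$) and the fact that $\pi_{\gamma_i, y}^{-1}\circ \pi_{\gamma_i, y}$ is the identity on $\gamma_i$. Linearity of the pushforward and Definition \ref{restrictionmeasure} then convert $\phi_x(\gamma_i)\,\pi_{\gamma_i, y *}\mu_{\gamma_i}$ into $\mu|_{\gamma_i}$, completing the identification.

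I do not anticipate a serious obstacle here: the argument is essentially bookkeeping, and every ingredient is already available. The most delicate point is keeping track of the two parameterizations of the leaves $\gamma$ and $\gamma_i$ as copies of $K$ via $\pi_{\gamma, y}$ and $\pi_{\gamma_i, y}$, and verifying that the leafwise dynamics $F_{\gamma_i}$ intertwines them correctly through $F|_{\gamma_i}$. Once this small commutative diagram is in place, equating the two expressions is immediate.
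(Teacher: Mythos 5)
Your argument is correct and follows the same route the paper sketches: reduce to positive measures by Remark \ref{ghtyhh}, plug equations (\ref{1}) and (\ref{2}) of Lemma \ref{transformula} into Definition \ref{restrictionmeasure} to cancel the normalizer, and then use the intertwining $\pi_{\gamma,y}\circ F|_{\gamma_i}=F_{\gamma_i}\circ\pi_{\gamma_i,y}$ to recognize each summand as $\func{F}_{\gamma_i\ast}\mu|_{\gamma_i}$. The paper states this only as an immediate consequence without spelling out the substitution and the commutation of projections, so you have in fact supplied the omitted bookkeeping accurately.
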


Sometimes (see also Remark \ref{chkjg}) it will be convenient to use the following expression for $(\func{F}_{\ast } \mu )|_{\gamma }$:
\begin{corollary}\label{oierew}
	For each $\mu \in \mathcal{L}^{\infty}$ it holds 
	\begin{equation}
	(\func{F}_{\ast }\mu )|_{\gamma }=\sum_{i=1}^{\deg(f)}{\func{F}%
		_{\gamma _i \ast }\mu |_{\gamma _i }\rho _i(\gamma _i)}\ \ m_{1}%
	\mathnormal{-a.e.}\ \ \gamma \in M,  \label{niceformulaaareer}
	\end{equation}%
	where $\gamma _i$ is the $i$-th pre image of $\gamma$, $i=1,\cdots, \deg(f)$.
\end{corollary}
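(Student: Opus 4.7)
The proof should be a direct consequence of Proposition \ref{niceformulaab} combined with the structural fact about the partition $\mathcal{P}$ established in Remark \ref{chkjg} (which itself rests on Lemma \ref{jdjkfklfd}). My plan is to simply drop the characteristic functions $\chi_{f_i(P_i)}(\gamma)$ appearing in \eqref{niceformulaa}, using that these functions are equal to $1$ for $m_1$-a.e.\ $\gamma \in M$.

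More precisely, by Lemma \ref{jdjkfklfd}, the finite open partition $\mathcal{P} = \{P_1, \dots, P_{\deg(f)}\}$ can be chosen so that each restriction $f_i := f|_{P_i}$ is a homeomorphism onto its image and $m_1(M \setminus f_i(P_i)) = 0$ for every $i$. Consequently, for each fixed $i$, $\chi_{f_i(P_i)}(\gamma) = 1$ for $m_1$-a.e.\ $\gamma \in M$, and since there are only finitely many indices, we have $\chi_{f_i(P_i)}(\gamma) = 1$ for all $i = 1, \dots, \deg(f)$ simultaneously, outside an $m_1$-null set.

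Applying Proposition \ref{niceformulaab} to $\mu \in \mathcal{L}^{\infty}$, we get
\begin{equation*}
(\func{F}_{\ast}\mu)|_{\gamma} = \sum_{i=1}^{\deg(f)} \func{F}_{\gamma_i \ast}\, \mu|_{\gamma_i}\, \rho_i(\gamma_i)\, \chi_{f_i(P_i)}(\gamma)
\end{equation*}
for $m_1$-a.e.\ $\gamma \in M$, where $\gamma_i = f_i^{-1}(\gamma)$ whenever $\gamma \in f_i(P_i)$. Intersecting the $m_1$-full measure set on which this identity holds with the $m_1$-full measure set on which every $\chi_{f_i(P_i)}(\gamma) = 1$, we obtain the claimed formula
\begin{equation*}
(\func{F}_{\ast}\mu)|_{\gamma} = \sum_{i=1}^{\deg(f)} \func{F}_{\gamma_i \ast}\, \mu|_{\gamma_i}\, \rho_i(\gamma_i) \quad m_1\text{-a.e.}\ \gamma \in M,
\end{equation*}
where now $\gamma_i$ denotes the (well-defined $m_1$-a.e.) $i$-th preimage of $\gamma$ under $f$.

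There is essentially no obstacle here: the only subtle point is a bookkeeping check that one can choose a single $m_1$-full measure subset of $M$ on which both the identity of Proposition \ref{niceformulaab} is valid and each $\gamma$ lies in every $f_i(P_i)$, but this follows from taking a finite intersection of full measure sets. No further estimate is needed.
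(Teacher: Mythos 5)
Your proof is correct and is exactly what the paper intends: the corollary is stated with a pointer to Remark \ref{chkjg}, and the only content of the deduction is that each $\chi_{f_i(P_i)}(\gamma)=1$ for $m_1$-a.e.\ $\gamma$ by Lemma \ref{jdjkfklfd}, so the characteristic functions in \eqref{niceformulaa} drop out after intersecting finitely many full-measure sets. Your bookkeeping remark about choosing a single full-measure set is the right (if routine) care to take.
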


\section{Basic properties of the norms and convergence to equilibrium}

In this section, we show important properties of the norms and their
behaviour with respect to the transfer operator. In particular, we prove
that the $\mathcal{L}^{\infty}$ norm is weakly contracted. We also prove other properties, like a Lasota-Yorke inequality for the strong norm and exponential convergence to
equilibrium. All these properties will be used in next section to prove the
spectral gap for the transfer operator associated to the system $F:\Sigma
\rightarrow \Sigma $.

\begin{proposition}[The weak norm is weakly contracted by $\func{F}_{\ast }$]

	\label{l1} If $\mu \in \mathcal{L}^{\infty}$ then 
	\begin{equation*}
	||\func{F}_{\ast }\mu ||_{\infty}\leq ||\mu ||_{\infty}.
	\end{equation*}%
	\label{weakcontral11234}
\end{proposition}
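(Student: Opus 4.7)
The plan is to bound the restriction $(\func{F}_{\ast}\mu)|_\gamma$ leaf by leaf using the Perron-Frobenius-like formula from Corollary \ref{oierew}, combined with the duality description of the Wasserstein-Kantorovich-like norm $\|\cdot\|_W$ against the $\zeta$-H\"older test class, and then take the essential supremum. Concretely, for $m_1$-almost every leaf $\gamma$ the Corollary gives
\begin{equation*}
(\func{F}_{\ast}\mu)|_\gamma=\sum_{i=1}^{\deg(f)}\func{F}_{\gamma_i\ast}\mu|_{\gamma_i}\,\rho_i(\gamma_i),
\end{equation*}
where $\gamma_i$ ranges over the $f$-preimages of $\gamma$.

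Next, fix a test function $g\colon K\to\mathbb{R}$ with $|g|_\infty\le 1$ and $H_\zeta(g)\le 1$. Linearity and the definition of the push-forward $\func{F}_{\gamma_i\ast}$ yield
\begin{equation*}
\left|\int g\,d(\func{F}_{\ast}\mu)|_\gamma\right|\le\sum_{i=1}^{\deg(f)}\rho_i(\gamma_i)\left|\int(g\circ F_{\gamma_i})\,d\mu|_{\gamma_i}\right|.
\end{equation*}
The central observation is that $g\circ F_{\gamma_i}$ remains admissible as a test function for $\|\cdot\|_W$: it is bounded by $1$ because $F_{\gamma_i}$ sends $K$ into $K$, and hypothesis (G1) tells us that $F_{\gamma_i}$ is $\alpha$-Lipschitz (with $\alpha<1$), so $H_\zeta(g\circ F_{\gamma_i})\le \alpha^\zeta H_\zeta(g)\le\alpha^\zeta<1$. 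Hence each integral is bounded by $\|\mu|_{\gamma_i}\|_W$, and taking the supremum over $g$ produces the leaf-wise inequality $\|(\func{F}_{\ast}\mu)|_\gamma\|_W\le\sum_{i}\rho_i(\gamma_i)\,\|\mu|_{\gamma_i}\|_W$.

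To close the argument, I would invoke invariance of $m_1$: since $m_1$ is $f$-invariant, $\func{P}_f 1=1$ almost everywhere, i.e.\ $\sum_{i=1}^{\deg(f)}\rho_i(\gamma_i)=1$ for $m_1$-a.e.\ $\gamma$. Because $f$ is non-singular with respect to $m_1$, the set of $\gamma$ whose preimages all lie in the full-measure set $\{\|\mu|_\beta\|_W\le\|\mu\|_\infty\}$ is itself of full $m_1$-measure. Combining the two facts gives $\|(\func{F}_{\ast}\mu)|_\gamma\|_W\le\|\mu\|_\infty$ for $m_1$-a.e.\ $\gamma$, and taking the essential supremum yields $\|\func{F}_{\ast}\mu\|_\infty\le\|\mu\|_\infty$. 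There is no serious obstacle here; the only point requiring care is the interplay between the a.e.\ statements, which is handled by non-singularity of $f$.
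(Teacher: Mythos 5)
Your proof is correct and follows essentially the same route as the paper's: the paper factors the key duality estimate into Lemma~\ref{niceformulaac} (showing $\|\func{F}_{\gamma\ast}\mu|_\gamma\|_W\le\|\mu|_\gamma\|_W$ because a contraction $F_\gamma$ preserves the admissible test class) and then sums against Corollary~\ref{oierew} using $\sum_i\rho_i(\gamma_i)=1$, whereas you inline that lemma's argument directly. The only cosmetic imprecision is attributing ``$f$ maps $m_1$-null sets to $m_1$-null sets'' to non-singularity (which is the preimage statement); what actually justifies it is that $f$ is a local diffeomorphism and $m_1$ is absolutely continuous, but the conclusion you need is the same.
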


In the proof of the proposition we will use the following lemma about the
behaviour of the $||\cdot ||_W$ norm (see equation (\ref{WW})) which says
that a contraction cannot increase the $||\cdot ||_W$ norm.

\begin{lemma}
	\label{niceformulaac} For every $\mu \in \mathcal{AB}$ and a stable leaf $%
	\gamma \in \mathcal{F}^{s}$, it holds 
	\begin{equation}
	||\func{F}_{\gamma \ast }\mu |_{\gamma }||_{W}\leq ||\mu |_{\gamma }||_{W},
	\label{weak1}
	\end{equation}%
	where $F_{\gamma }:K\longrightarrow K$ is defined in Proposition \ref%
	{niceformulaab} and $\func{F}_{\gamma \ast }$ is the associated pushforward
	map. Moreover, if $\mu $ is a probability measure on $K$, it holds 
	\begin{equation}
	||\func{F}_{\gamma \ast }{^{n}}\mu ||_{W}=||\mu ||_{W}=1,\ \ \forall \ \
	n\geq 1.  \label{simples}
	\end{equation}
\end{lemma}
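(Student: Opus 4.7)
The plan is to exploit the dual formulation of $||\cdot||_W$ together with the contractivity of $F_\gamma$ inherited from hypothesis (G1). Via the identification $\pi_{\gamma,y}$, the map $F_\gamma$ reduces on a vertical fiber $\gamma = \{x\} \times K$ to $z \longmapsto G(x, z)$, so hypothesis (G1) implies that $F_\gamma$ is $\alpha$-Lipschitz with $\alpha < 1$ for $m_1$-a.e.\ $\gamma$.

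The core observation is that precomposition with a contraction does not enlarge the admissible class of test functions: if $g: K \longrightarrow \mathbb{R}$ satisfies $|g|_\infty \leq 1$ and $H_\zeta(g) \leq 1$, then clearly $|g \circ F_\gamma|_\infty \leq 1$, while
\[
H_\zeta(g \circ F_\gamma) \;\leq\; \alpha^\zeta\, H_\zeta(g) \;\leq\; \alpha^\zeta \;\leq\; 1
\]
since $\alpha < 1$. Inequality \eqref{weak1} then follows from the change-of-variables identity $\int g\, d(\func{F}_{\gamma\ast}\mu|_\gamma) = \int (g\circ F_\gamma)\, d\mu|_\gamma$, applied to each part of the Jordan decomposition of $\mu|_\gamma$ and combined by linearity (invoking Remark \ref{ghtyhh} to ensure that the restriction is well-defined on signed measures), followed by taking the supremum over admissible $g$.

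For the second statement, pushforward preserves total mass, so $\func{F}_{\gamma\ast}^{\,n}\mu$ is a probability measure on $K$ whenever $\mu$ is. For any probability measure $\nu$ on $K$, the upper bound $||\nu||_W \leq 1$ is immediate from $|\int g\, d\nu| \leq |g|_\infty \cdot \nu(K) \leq 1$ for admissible $g$, while the matching lower bound follows by plugging in the constant test function $g \equiv 1$, which satisfies $H_\zeta(1) = 0$ and $|1|_\infty = 1$, giving $\int 1\, d\nu = 1$.

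No substantive obstacle is expected: the lemma is the standard compatibility between contracting fiber dynamics and a dual-H\"older-type norm, and the argument is essentially identical to the Lipschitz case treated in \cite{GLu}; what drives it is simply $\alpha^\zeta \leq 1$, which holds for any H\"older exponent $\zeta \in (0,1]$.
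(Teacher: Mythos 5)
Your proof is correct and follows essentially the same route as the paper's: show that precomposition with the $\alpha$-contraction $F_\gamma$ maps the test class $\{g : |g|_\infty \leq 1,\, H_\zeta(g)\leq 1\}$ into itself, invoke the change-of-variables identity, and take the supremum; for \eqref{simples}, use mass preservation and the test function $g\equiv 1$. You are in fact slightly more explicit than the paper on the key estimate $H_\zeta(g\circ F_\gamma)\leq\alpha^\zeta H_\zeta(g)\leq 1$ and on the $m_1$-a.e.\ qualification coming from (G1), but these are refinements of the same argument, not a different approach.
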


\begin{proof}
	(of Lemma \ref{niceformulaac}) Indeed, since $F_{\gamma }$ is an $\alpha $%
	-contraction, if $|g|_{\infty }\leq 1$ and $H_\zeta(g)\leq 1$, the same holds for 
	$g\circ F_{\gamma }$. Since 
	\begin{equation*}
	\left\vert \int {g~}d\func{F}_{\gamma \ast }\mu |_{\gamma }\right\vert
	=\left\vert \int {g(F_{\gamma })~}d\mu |_{\gamma }\right\vert ,
	\end{equation*}%
	taking the supremum over $g$ \ such that $|g|_{\infty }\leq 1$ and $%
	H_\zeta(g)\leq 1$ we finish the proof of the inequality \ref{weak1}.
	
	In order to prove equation (\ref{simples}), consider a probability measure $%
	\mu $ on $K$ and a $\zeta$-H\"older function $g:K\longrightarrow \mathbb{R}$%
	, such that $||g||_{\infty }\leq 1$. We get immediately $|\int {g}d\mu |\leq
	||g||_{\infty }\leq 1$, which yields $||\mu ||_{W}\leq 1$. Considering $%
	g\equiv 1$, we get $||\mu ||_{W}=1$.
\end{proof}

\begin{proof}
	(of Proposition \ref{l1} )
	
	Lemma \ref{niceformulaac} and Corollary \ref%
	{oierew} yield 
	\begin{eqnarray*}
		||(\func{F}_{\ast }\mu)|_\gamma ||_{W} 
		&\leq &\sum_{i=1}^{\deg(f)}{||\func{F}%
			_{\gamma _i \ast }\mu |_{\gamma _i }\rho _i(\gamma _i)||_W} \\
		&\leq &\sum_{i=1}^{\deg(f)}{\rho _i(\gamma _i)||\mu |_{\gamma _i }||_W}  \\
		&\leq &||\mu ||_{\infty}\sum_{i=1}^{\deg(f)}{\rho _i(\gamma _i)} \\
		&=&||\mu ||_{\infty}.
	\end{eqnarray*}We finish the proof by taking the essential suppremum over $\gamma$.
\end{proof}

The following proposition shows a regularizing action of the transfer
operator with respect to the strong norm. Such inequalities are usually
called Lasota-Yorke or Doeblin-Fortet inequalities.

\begin{proposition}[Lasota-Yorke inequality for $S^{\infty}$] 
	There exist $A$, $B_{2}>0$ and $\lambda <1$ ($\lambda = \beta _2$ of Corollary \ref{irytrtrte}) such that, for all $\mu
	\in S^{1}$, it holds%
	\begin{equation}
	||\func{F}_{\ast }^{n}\mu ||_{S^{\infty}}\leq A\lambda ^{n}||\mu
	||_{S^{\infty}}+B_{2}||\mu ||_{\infty},\ \ \forall n\geq 1.  \label{xx}
	\end{equation}%
	\label{lasotaoscilation2}
\end{proposition}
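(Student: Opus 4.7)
The plan is to split the strong norm into its two components and treat them separately, transporting the Lasota--Yorke inequality for the base map $f$ (Corollary \ref{irytrtrte}) up to the skew product via the disintegration formula.

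First I would observe that by Lemma \ref{transformula}, if $\mu\in S^\infty$ has marginal density $\phi_x$, then the marginal density of $\func{F}_\ast^n\mu$ is exactly $\func{P}_f^n\phi_x$. Consequently,
\begin{equation*}
\|\func{F}_\ast^n\mu\|_{S^\infty}=|\func{P}_f^n\phi_x|_\zeta+\|\func{F}_\ast^n\mu\|_\infty.
\end{equation*}
The weak part is immediately handled by Proposition \ref{weakcontral11234}, which gives $\|\func{F}_\ast^n\mu\|_\infty\leq\|\mu\|_\infty$. For the Hölder part, Corollary \ref{irytrtrte} applied to $\phi_x$ yields
\begin{equation*}
|\func{P}_f^n\phi_x|_\zeta\leq B_3\beta_2^n|\phi_x|_\zeta+C_2|\phi_x|_\infty.
\end{equation*}

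The key auxiliary step, which I expect to be the main (though mild) obstacle, is to bound $|\phi_x|_\infty$ in terms of $\|\mu\|_\infty$ rather than in terms of the strong norm itself (otherwise the inequality would be useless). For this I would test the Wasserstein-like distance on each leaf against the constant function $g\equiv 1$, which is admissible since $|g|_\infty\leq 1$ and $H_\zeta(g)=0\leq 1$. Since on each leaf $\mu^\pm|_\gamma$ has total mass $\phi_x^\pm(\gamma)$, this gives
\begin{equation*}
W_1^\zeta(\mu^+|_\gamma,\mu^-|_\gamma)\geq\left|\int 1\,d\mu^+|_\gamma-\int 1\,d\mu^-|_\gamma\right|=|\phi_x(\gamma)|
\end{equation*}
for $m_1$-a.e. $\gamma$, so that $|\phi_x|_\infty\leq\|\mu\|_\infty$.

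Combining the three ingredients, I obtain
\begin{equation*}
\|\func{F}_\ast^n\mu\|_{S^\infty}\leq B_3\beta_2^n|\phi_x|_\zeta+C_2\|\mu\|_\infty+\|\mu\|_\infty\leq B_3\beta_2^n\|\mu\|_{S^\infty}+(C_2+1)\|\mu\|_\infty,
\end{equation*}
which is the desired inequality with $A=B_3$, $\lambda=\beta_2$ and $B_2=C_2+1$. The whole argument is a clean reduction: the contraction on fibers kills the $W$-part of the weak norm uniformly, while the non-uniformly expanding behavior of $f$ in the base is entirely encapsulated in Corollary \ref{irytrtrte}, so no new Lasota--Yorke-type estimate needs to be produced at the level of measures.
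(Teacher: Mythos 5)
Your proof is correct and follows essentially the same route as the paper: identify the marginal density of $\func{F}_\ast^n\mu$ as $\func{P}_f^n\phi_x$, control the weak part via Proposition \ref{weakcontral11234}, control the H\"older part via Corollary \ref{irytrtrte}, and close the argument with the bound $|\phi_x|_\infty\leq\|\mu\|_\infty$ obtained by testing the $W$-norm on each leaf against $g\equiv 1$. The resulting constants $A=B_3$, $\lambda=\beta_2$, $B_2=C_2+1$ coincide exactly with the paper's.
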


\begin{proof}
	
	Firstly we recall that $\phi _{x}$ is the marginal density of the
	disintegration of $\mu $. Precisely, $\phi _{x}=\phi _{x}^{+}-\phi _{x}^{-}$%
	, where $\phi _{x}^{+}=\dfrac{d\pi _{x}^{\ast }\mu ^{+}}{dm_{1}}$ and $\phi
	_{x}^{-}=\dfrac{d\pi _{x}^{\ast }\mu ^{-}}{dm_{1}}$. By the definition of
	the Wasserstein norm, it follows that for every $\gamma $ it holds $||\mu
	|_{\gamma }||_{W}\geq \left| \int 1~d(\mu |_{\gamma })\right|=|\phi _{x}(\gamma )|$. Thus, 
	$|\phi _{x}|_{\infty}\leq ||\mu ||_{\infty}.$ By this last remark, equation (\ref%
	{lasotaiiii}) and Proposition \ref{l1} we have%
	\begin{eqnarray*}
		||\func{F}_{\ast }^{n}\mu ||_{S^{\infty}} &=&|\func{P}_{f}^{n}\phi _{x}|_{\zeta}+||%
		\func{F}_{\ast }^{n}\mu ||_{\infty} \\
		&\leq &B_{3}\beta _{2}^{n}|\phi _{x}|_{\zeta}+C_{2}|\phi _{x}|_{\infty}+||\mu ||_{\infty}
		\\
		&\leq &B_{3}\beta _{2}^{n}||\mu ||_{S^{\infty}}+(C_{2}+1)||\mu ||_{\infty}.
	\end{eqnarray*}%
	We finish the proof by setting $\lambda =\beta _{2}$, $A=B_{3}$ and $%
	B_{2}=C_{2}+1$.
\end{proof}

\subsection{Convergence to equilibrium}\label{invt}

Let $X$ be a compact metric space. Consider the space $\mathcal{SB}(X)$ of
signed Borel measures on $X$. \ In the following, we consider two further
normed vectors spaces of signed Borel measures on $X.$ The spaces $%
(B_{s},||~||_{s})\subseteq (B_{w},||~||_{w})\subseteq \mathcal{SB}(X)$ with
norms satisfying%
\begin{equation*}
||~||_{w}\leq ||~||_{s}.
\end{equation*}%
We say that a Markov operator
\begin{equation*}
	\text{L}:B_{w}\longrightarrow B_{w}
\end{equation*}
has convergence to equilibrium with speed at least $\Phi $ and with respect to
the norms $||\cdot ||_{s}$ and $||\cdot ||_{w}$, if for each $\mu \in 
\mathcal{V}_{s}$, where 
\begin{equation}
\mathcal{V}_{s}=\{\mu \in B_{s},\mu (X)=0\}  \label{vs}
\end{equation}%
is the space of zero-average measures, it holds 
\begin{equation*}
||\text{L}^{n}(\mu )||_{w}\leq \Phi (n)||\mu ||_{s},  \label{wwe}
\end{equation*}%
where $\Phi (n)\longrightarrow 0$ as $n\longrightarrow \infty $.

In this section, we prove that $F_{\ast }$ has exponential convergence to
equilibrium. This is weaker than spectral gap. However, the
spectral gap follows from the above Lasota-Yorke inequality and the
convergence to equilibrium. Before the main statements we need some
preliminary lemmata. The following is somewhat similar to Lemma \ref%
{niceformulaac} considering the behaviour of the $||\cdot ||_{W}$ norm after
a contraction. It gives a finer estimate for zero average measures and it is useful to estimate the behaviour of our $W$ norms under contractions.

\begin{lemma}\label{opsdas}
	For all signed measures $\mu $ on $K$ and for all $\gamma \in \mathcal{F}%
	^{s}$, it holds%
	\begin{equation*}
	||\func{F}_{\gamma \ast }\mu ||_{W}\leq \alpha^\zeta ||\mu ||_{W}+\mu (K)
	\end{equation*}%
	($\alpha $ is the rate of contraction of $G$, see \eqref{contracting1}). In
	particular, if $\mu (K)=0$ then%
	\begin{equation*}
	||\func{F}_{\gamma \ast }\mu ||_{W}\leq \alpha^\zeta ||\mu ||_{W}.
	\end{equation*}%
	\label{quasicontract}
\end{lemma}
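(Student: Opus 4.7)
The plan is to reduce the estimate to the defining supremum of $||\cdot||_W$ and use the $\alpha$-contraction of $F_\gamma$ (hypothesis (G1)) to improve the H\"older constant of a pulled-back test function.

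First I would fix a test function $g:K\to\mathbb{R}$ with $|g|_\infty\le 1$ and $H_\zeta(g)\le 1$, and compute $\int g\,d(\func{F}_{\gamma\ast}\mu)=\int(g\circ F_\gamma)\,d\mu$. Set $\tilde g:=g\circ F_\gamma$. Then $|\tilde g|_\infty\le 1$ automatically, while the $\alpha$-contraction of $F_\gamma$ gives
\[
H_\zeta(\tilde g)\le H_\zeta(g)\cdot\alpha^\zeta\le\alpha^\zeta,
\]
so $\tilde g$ has a \emph{better-than-unit} H\"older constant, which is the whole point: if I can put $\tilde g$ in the form $\alpha^\zeta\cdot(\text{admissible test function})+\text{constant}$, the $\alpha^\zeta$ factor will migrate in front of $||\mu||_W$.

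Next, I would subtract off a scalar to exploit this. Fix a reference point $z_0\in K$ and set $c:=\tilde g(z_0)$, so $|c|\le|\tilde g|_\infty\le 1$. Putting $h:=\tilde g-c$, clearly $H_\zeta(h)=H_\zeta(\tilde g)\le\alpha^\zeta$, and the pointwise estimate
\[
|h(z)|=|\tilde g(z)-\tilde g(z_0)|\le\alpha^\zeta d_2(z,z_0)^\zeta\le\alpha^\zeta
\]
(using $\diam(K)\le 1$) shows that $h/\alpha^\zeta$ is an admissible competitor in the definition of $||\mu||_W$. Therefore $\left|\int h\,d\mu\right|\le\alpha^\zeta||\mu||_W$.

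Finally, writing $\tilde g=h+c$ and integrating against $\mu$,
\[
\left|\int g\,d(\func{F}_{\gamma\ast}\mu)\right|=\left|\int\tilde g\,d\mu\right|\le\left|\int h\,d\mu\right|+|c|\,|\mu(K)|\le\alpha^\zeta||\mu||_W+\mu(K),
\]
and taking the supremum over admissible $g$ yields the first inequality; specializing to $\mu(K)=0$ gives the second. The only delicate point is the step that $|h|_\infty\le\alpha^\zeta$: this is the place where one implicitly needs a diameter hypothesis on $K$ (or, absent it, would pick up a factor $\diam(K)^\zeta$). Everything else is bookkeeping from the definition of $W_1^\zeta$.
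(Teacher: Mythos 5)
Your argument is essentially identical to the paper's: both pull back the test function by $F_\gamma$, observe that the Hölder constant drops to $\alpha^\zeta$, subtract a constant value of $g\circ F_\gamma$ so the result has $\sup$ norm at most $\alpha^\zeta$ (using $\diam(K)\le 1$), rescale by $\alpha^\zeta$ to produce an admissible competitor, and split off the constant's contribution as $|\mu(K)|$. The only cosmetic differences are that the paper chooses the reference value $\theta=g\circ F_\gamma(z)$ while you use $c=\tilde g(z_0)$ at an arbitrary base point, and you are slightly more careful in writing $d_2(\cdot,\cdot)^\zeta$ rather than $d_2(\cdot,\cdot)$; your remark about the implicit diameter normalization on $K$ is a fair observation about the paper's conventions.
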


\begin{proof}
	If $H_\zeta(g)\leq 1$ and $||g||_{\infty }\leq 1$, then $g\circ F_{\gamma }$ is $%
	\alpha^\zeta $-H\"older. Moreover, since $||g||_{\infty }\leq 1$, then $||g\circ
	F_{\gamma }-\theta ||_{\infty }\leq \alpha^\zeta $, for some $\theta $ such that $%
	|\theta |\leq 1$. Indeed, let $z\in K$ be such that $|g\circ F_{\gamma
	}(z)|\leq 1$, set $\theta =g\circ F_{\gamma }(z)$ and let $d_{2}$ be the
	Riemannian metric of $K$. Since $\diam(K)=1$, we have 
	\begin{equation*}
	|g\circ F_{\gamma }(y)-\theta |\leq \alpha^\zeta d_{2}(y,z)\leq \alpha^\zeta
	\end{equation*}%
	and consequently $||g\circ F_{\gamma }-\theta ||_{\infty }\leq \alpha^\zeta $.
	
	This implies
	
	\begin{align*}
	\left\vert \int_{K}{g}d\func{F}_{\gamma \ast }\mu \right\vert &
	=\left\vert \int_{K}{g\circ F_{\gamma }}d\mu \right\vert \\
	& \leq \left\vert \int_{K}{g\circ F_{\gamma }-\theta }d\mu \right\vert
	+\left\vert \int_{K}{\theta }d\mu \right\vert \\
	& =\alpha^\zeta \left\vert \int_{K}{\frac{g\circ F_{\gamma }-\theta }{\alpha^\zeta }}%
	d\mu \right\vert +|\theta ||\mu (K)|.
	\end{align*}%
	Taking the supremum over $g$ such that $|g|_{\infty }\leq 1$ and $%
	H_\zeta(g)\leq 1$, we have $||\func{F}_{\gamma \ast }\mu ||_{W}\leq \alpha^\zeta ||\mu
	||_{W}+\mu (K)$. In particular, if $\mu (K)=0$, we get the second
	part.
\end{proof}

Now we are ready to show a key estimate regarding the behaviour of our weak $%
|| \cdot ||_{\infty} $ norm for the systems defined at beginning of
Section \ref{sec2}.

\begin{proposition}
	\label{5.6} For every signed measure $\mu \in \mathcal{L}^{\infty}$, it holds 
	\begin{equation}
	||\func{F}_{\ast }\mu ||_{\infty}\leq \alpha ^\zeta ||\mu ||_{\infty}+|\phi
	_{x}|_{\infty}.  \label{abovv}
	\end{equation}
\end{proposition}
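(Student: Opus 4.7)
The plan is to mimic the argument used in the proof of Proposition \ref{weakcontral11234}, but replacing the crude bound $\|\func{F}_{\gamma\ast}\mu|_\gamma\|_W\leq \|\mu|_\gamma\|_W$ (from Lemma \ref{niceformulaac}) by the sharper estimate from Lemma \ref{opsdas}, which gives a contraction factor $\alpha^\zeta$ at the cost of a correction term involving the total mass $|\mu|_\gamma(K)|=|\phi_x(\gamma)|$. The key identity is the Perron--Frobenius-like formula in Corollary \ref{oierew}, namely
\begin{equation*}
(\func{F}_\ast\mu)|_\gamma = \sum_{i=1}^{\deg(f)} \rho_i(\gamma_i)\,\func{F}_{\gamma_i\ast}\mu|_{\gamma_i} \qquad m_1\text{-a.e. }\gamma\in M,
\end{equation*}
where $\gamma_i$ denotes the $i$-th preimage of $\gamma$.

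Applying the triangle inequality for $\|\cdot\|_W$ and then Lemma \ref{opsdas} to each summand, for $m_1$-a.e.\ $\gamma$ one obtains
\begin{equation*}
\|(\func{F}_\ast\mu)|_\gamma\|_W \leq \sum_{i=1}^{\deg(f)} \rho_i(\gamma_i)\bigl(\alpha^\zeta \|\mu|_{\gamma_i}\|_W + |\mu|_{\gamma_i}(K)|\bigr).
\end{equation*}
By the definition of the restriction $\mu|_{\gamma_i}$, we have $\mu|_{\gamma_i}(K)=\phi_x(\gamma_i)$ (with the convention that $\mu=\mu^+-\mu^-$ gives $\phi_x=\phi_x^+-\phi_x^-$), so the second piece reads $\sum_i \rho_i(\gamma_i)|\phi_x(\gamma_i)|=\func{P}_f(|\phi_x|)(\gamma)$, which is bounded above by $|\phi_x|_\infty$ since $\func{P}_f$ preserves sup-norm of non-negative functions (as $\func{P}_f 1=1$ and hence $\sum_i\rho_i(\gamma_i)=1$ for a.e.\ $\gamma$).

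For the first piece, I would use that the set $E=\{\gamma:\|\mu|_\gamma\|_W>\|\mu\|_\infty\}$ has $m_1(E)=0$ and $f$ is non-singular, hence $m_1(f^{-1}(E))=0$; so for $m_1$-a.e.\ $\gamma$ every preimage $\gamma_i$ avoids $E$ and $\|\mu|_{\gamma_i}\|_W\leq \|\mu\|_\infty$. Therefore
\begin{equation*}
\alpha^\zeta \sum_{i=1}^{\deg(f)} \rho_i(\gamma_i)\|\mu|_{\gamma_i}\|_W \leq \alpha^\zeta \|\mu\|_\infty \sum_{i=1}^{\deg(f)}\rho_i(\gamma_i)=\alpha^\zeta \|\mu\|_\infty.
\end{equation*}
Combining these two bounds and taking essential supremum over $\gamma\in M$ yields the desired inequality. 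The only subtle point — and the step I would spell out carefully — is the passage to essential supremum, i.e.\ checking that the negligible exceptional set does not cause trouble after pulling back through $f$; this uses only the non-singularity of $f$ with respect to $m_1$.
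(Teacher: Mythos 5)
Your proof is correct and follows essentially the same path as the paper's: both start from the Perron--Frobenius-like formula of Corollary~\ref{oierew}, apply Lemma~\ref{opsdas} (alias~\ref{quasicontract}) term by term, use $\mu|_{\gamma_i}(K)=\phi_x(\gamma_i)$ and $\sum_i\rho_i(\gamma_i)=1$, and finish by taking the essential supremum over $\gamma$. The only difference is that you make explicit the measure-theoretic justification (non-singularity of $f$, so pullbacks of $m_1$-null sets stay null) for passing to the essential supremum, which the paper leaves implicit.
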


\begin{proof}
	Let $f_{i}$ be the branches of $f$, for all $i=1\cdots \deg(f)$. Applying Lemma %
	\ref{quasicontract} on the third line below, we have 
	\begin{eqnarray*}
		||(\func{F}_{\ast }\mu )|_{\gamma }||_{W} &=&\left\vert \left\vert
		\sum_{i=1}^{\deg(f)}\func{F}_{\gamma_{i}}\ast \mu
		|_{\gamma_{i}} \rho(\gamma _i)\right\vert \right\vert _{W}\\
		&\leq&
		\sum_{i=1}^{\deg(f)}\left\vert \left\vert\func{F}_{\gamma_{i}}\ast \mu
		|_{\gamma_{i}} \rho(\gamma _i)\right\vert \right\vert _{W} \\
		&\leq &\sum_{i=1}^{\deg(f)} ( \alpha^\zeta||\mu |_{\gamma_i}||_{W}+|\phi
		_{x}(\gamma_i)|)\rho (\gamma _i) \\
		&\leq &(\alpha^\zeta ||\mu ||_{\infty}+|\phi
		_{x}|_{\infty})\sum_{i=1}^{\deg(f)}\rho(\gamma_1)\\
		&= &\alpha^\zeta ||\mu ||_{\infty}+|\phi
		_{x}|_{\infty}.
	\end{eqnarray*}%
	Hence, by taking the supremum on $\gamma $, we finish the proof of the
	statement.
\end{proof}

Iterating (\ref{abovv}) we get the following corollary.

\begin{corollary}
	For every signed measure $\mu \in \mathcal{L}^{\infty}$ it holds 
	\begin{equation*}
	||\func{F}_{\ast }^{n}\mu ||_{\infty}\leq (\alpha ^\zeta) ^{n}||\mu ||_{\infty}+\overline{%
		\alpha }|\phi _{x}|_{\infty},
	\end{equation*}%
	where $\overline{\alpha }=\frac{1 }{1-\alpha^\zeta }$. \label{nicecoro}
\end{corollary}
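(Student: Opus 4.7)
The plan is to prove this by straightforward induction on $n$, using Proposition~\ref{5.6} as the one-step inequality together with the fact that the marginal density of $\func{F}_*\mu$ is $\func{P}_f(\phi_x)$ (from Lemma~\ref{transformula}), and that $|\func{P}_f^k \phi_x|_\infty \leq |\phi_x|_\infty$ for every $k \geq 0$.

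First I would verify the needed $L^\infty$ bound on iterates of the marginal density. Since $m_1$ is the $f$-invariant probability, the constant function $1$ is a fixed point of $\func{P}_f$, which in terms of the explicit formula from Remark~\ref{chkjg} gives $\sum_{i=1}^{\deg(f)} \rho_i(x_i) = 1$ for $m_1$-a.e.\ $x$. Consequently, for any $\phi \in L^\infty_{m_1}$,
\begin{equation*}
|\func{P}_f \phi|_\infty \leq |\phi|_\infty \sum_{i=1}^{\deg(f)} \rho_i(x_i) = |\phi|_\infty,
\end{equation*}
and by iteration $|\func{P}_f^k \phi_x|_\infty \leq |\phi_x|_\infty$ for every $k \geq 0$.

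Next I would set up the induction. The base case $n=1$ is exactly Proposition~\ref{5.6}. For the inductive step, apply Proposition~\ref{5.6} to $\func{F}_*^{n-1}\mu$: since its marginal density is $\func{P}_f^{n-1}\phi_x$,
\begin{equation*}
||\func{F}_*^{n}\mu||_{\infty} = ||\func{F}_*(\func{F}_*^{n-1}\mu)||_{\infty} \leq \alpha^\zeta ||\func{F}_*^{n-1}\mu||_{\infty} + |\func{P}_f^{n-1}\phi_x|_{\infty} \leq \alpha^\zeta ||\func{F}_*^{n-1}\mu||_{\infty} + |\phi_x|_{\infty}.
\end{equation*}
Unrolling the recursion yields
\begin{equation*}
||\func{F}_*^n \mu||_{\infty} \leq (\alpha^\zeta)^n ||\mu||_{\infty} + |\phi_x|_{\infty} \sum_{k=0}^{n-1} (\alpha^\zeta)^k \leq (\alpha^\zeta)^n ||\mu||_{\infty} + \frac{1}{1-\alpha^\zeta} |\phi_x|_{\infty},
\end{equation*}
which is the desired inequality with $\overline{\alpha} = 1/(1-\alpha^\zeta)$.

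There is no real obstacle here; the only non-formal point is the verification that $\func{P}_f$ does not increase the $L^\infty$-norm of the marginal density, which follows immediately from the $f$-invariance of $m_1$ encoded in the identity $\sum_i \rho_i(x_i) = 1$. Everything else is bookkeeping of a geometric series.
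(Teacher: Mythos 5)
Your proof is correct and fills in exactly what the paper leaves implicit when it says "Iterating (\ref{abovv}) we get the following corollary." You correctly identified the one non-trivial bookkeeping point: that after $k$ iterations the marginal density is $\func{P}_f^k\phi_x$ (from Lemma~\ref{transformula}) and that $|\func{P}_f^k\phi_x|_\infty\leq|\phi_x|_\infty$ because $\sum_i\rho_i(x_i)=1$ by $f$-invariance of $m_1$; without this observation, a naive iteration would only give the weaker bound with $\|\mu\|_\infty$ in place of $|\phi_x|_\infty$ in the second term.
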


Let us consider the set of zero average measures in $S^{\infty}$ defined by 
\begin{equation}
\mathcal{V}_{s}=\{\mu \in S^{\infty}:\mu (\Sigma )=0\}.  \label{mathV}
\end{equation}%
Note that, for all $\mu \in \mathcal{V}_{s}$ we have $\pi _{x\ast }\mu
(M)=0$. Moreover, since $\pi _{x\ast }\mu =\phi _{x}m_{1}$ ($\phi
_{x}=\phi _{x}^{+}-\phi _{x}^{-}$), we have $\displaystyle{\int_{M}{\phi
		_{x}}dm_{1}=0}$. This allows us to apply Theorem \ref{loiub} in the proof of
the next proposition.

\begin{proposition}[Exponential convergence to equilibrium]
	\label{5.8} There exist $D_{2}\in \mathbb{R}$ and $0<\beta _{1}<1$ such that
	for every signed measure $\mu \in \mathcal{V}_{s}$, it holds 
	\begin{equation*}
	||\func{F}_{\ast }^{n}\mu ||_{\infty}\leq D_{2}\beta _{1}^{n}||\mu ||_{S^{\infty}},
	\end{equation*}%
	for all $n\geq 1$, where $\beta _{1}=\max \{\sqrt{r},%
	\sqrt{\alpha^\zeta }\}$ and $D_{2}=(\sqrt{\alpha^\zeta }^{-1}+\overline{\alpha }D \sqrt{r}^{-1})$.\label{quasiquasiquasi}
\end{proposition}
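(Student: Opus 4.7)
The key identity is Corollary~\ref{nicecoro}, which gives a geometric decay in the contraction rate $\alpha^\zeta$ plus an inhomogeneous term controlled by the $L^\infty$-norm of the marginal density. Since $\mu\in\mathcal V_s$ has zero average, its marginal density $\phi_x$ has $\int\phi_x\,dm_1=0$, so one can hope to use Theorem~\ref{loiub} to show that the inhomogeneous term decays exponentially as well. The natural strategy is therefore to split the iterate $\func{F}_{\ast}^n$ in the middle: apply Corollary~\ref{nicecoro} to the last $n-m$ iterates, and use the exponential decay of $\func{P}_f^m \phi_x$ (coming from Theorem~\ref{loiub}) to control the marginal that feeds into the inhomogeneous term.

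\textbf{Step 1.} Let $m=\lfloor n/2\rfloor$. Iterating Lemma~\ref{transformula} one sees that the marginal density of $\func{F}_{\ast}^m\mu$ is exactly $\func{P}_f^m\phi_x$. Apply Corollary~\ref{nicecoro} to the measure $\func{F}_{\ast}^m\mu$:
\begin{equation*}
\|\func{F}_{\ast}^n\mu\|_{\infty}=\|\func{F}_{\ast}^{n-m}(\func{F}_{\ast}^m\mu)\|_{\infty}\leq (\alpha^\zeta)^{n-m}\|\func{F}_{\ast}^m\mu\|_{\infty}+\overline{\alpha}\,|\func{P}_f^m\phi_x|_{\infty}.
\end{equation*}

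\textbf{Step 2.} Use the weak contraction of $\func{F}_{\ast}$ (Proposition~\ref{l1}) to bound $\|\func{F}_{\ast}^m\mu\|_{\infty}\leq \|\mu\|_{\infty}\leq \|\mu\|_{S^{\infty}}$. For the second term, since $\mu\in\mathcal V_s$ implies $\int\phi_x\,dm_1=0$, Theorem~\ref{loiub} applies, yielding
\begin{equation*}
|\func{P}_f^m\phi_x|_{\infty}\leq |\func{P}_f^m\phi_x|_{\zeta}\leq D r^m |\phi_x|_{\zeta}\leq D r^m \|\mu\|_{S^{\infty}}.
\end{equation*}

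\textbf{Step 3.} Combining gives $\|\func{F}_{\ast}^n\mu\|_{\infty}\leq \bigl((\alpha^\zeta)^{n-m}+\overline{\alpha}D r^m\bigr)\|\mu\|_{S^{\infty}}$. Since $m=\lfloor n/2\rfloor$, one has $n-m\geq n/2$ and $m\geq (n-1)/2$, so
\begin{equation*}
(\alpha^\zeta)^{n-m}\leq (\sqrt{\alpha^\zeta})^n\leq \sqrt{\alpha^\zeta}^{-1}(\sqrt{\alpha^\zeta})^n,\qquad r^m\leq \sqrt{r}^{-1}(\sqrt{r})^n,
\end{equation*}
valid for all $n\geq 1$ and both parities. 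Setting $\beta_1=\max\{\sqrt r,\sqrt{\alpha^\zeta}\}$ and $D_2=\sqrt{\alpha^\zeta}^{-1}+\overline{\alpha}D\sqrt{r}^{-1}$ yields the claimed inequality.

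The argument is essentially routine once one has the two ingredients (contraction on the fibers and exponential decay on the base acting on zero-mean Hölder densities). The only minor point requiring care is handling the parity of $n$ uniformly so that the stated constants $\beta_1$ and $D_2$ come out exactly as written; this is taken care of by the slack factors $\sqrt{\alpha^\zeta}^{-1}$ and $\sqrt r^{-1}$ in $D_2$.
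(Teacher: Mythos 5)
Your proposal is correct and follows essentially the same route as the paper's proof: split $n$ roughly in half, apply Corollary~\ref{nicecoro} to the iterated measure to get a geometric term plus an inhomogeneous term controlled by the marginal density, then bound the first piece with the weak contraction (Proposition~\ref{weakcontral11234}) and the second with Theorem~\ref{loiub} applied to the zero-mean density $\phi_x$. The only superficial difference is bookkeeping: the paper writes $n=2l+d$ and feeds $\func{F}_*^{l+d}\mu$ into Corollary~\ref{nicecoro} for the last $l$ iterates, whereas you feed $\func{F}_*^{\lfloor n/2\rfloor}\mu$ in for the last $\lceil n/2\rceil$ iterates; both yield the stated $\beta_1$ and $D_2$.
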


\begin{proof}
	In this proof, to simplify the notation, we denote the constant $\alpha^\zeta$ just by $\alpha$.

	Given $\mu \in \mathcal{V}_s$ and denoting $\phi _{x}=\phi _{x}^{+}-\phi
	_{x}^{-}$, it holds that $\int {\phi }_{x}dm _1=0$. Moreover, Theorem \ref
	{loiub} yields $|\func{P}_{f}^{n}(\phi _{x})|_{\zeta}\leq Dr^{n}|\phi _{x}|_{\zeta}$
	for all $n\geq 1$, then (since ${|\phi_x|}_{\infty}\le {\|\mu\|}_{\infty}$) $|\func{P}_{f}^{n}(\phi _{x})|_{\zeta}\leq Dr^{n}||\mu
	||_{S^{\infty}}$, for all $n\geq 1$.
	
	Let $l$ and $0\leq d\leq 1$ be the coefficients of the division of $n$ by $2$%
	, i.e., $n=2l+d$. Thus, $l=\frac{n-d}{2}$ (by Proposition \ref%
	{weakcontral11234}, we have $||\func{F}_{\ast }^{n}\mu ||_{\infty}\leq ||\mu
	||_{\infty}$, for all $n$, and $||\mu ||_{\infty}\leq ||\mu ||_{S^{\infty}}$) and by
	Corollary \ref{nicecoro}, it holds (below, set $\beta _{1}=\max \{\sqrt{r},%
	\sqrt{\alpha^\zeta }\}$)
	
	\begin{eqnarray*}
		||\func{F}_{\ast }^{n}\mu ||_{\infty} &= &||\func{F}_{\ast }^{2l+d}\mu ||_{\infty}
		\\
		&\leq &(\alpha ^\zeta) ^{l}||\func{F}_{\ast }^{l+d}\mu ||_{\infty}+\overline{\alpha }%
		\left\vert \dfrac{d(\pi _{x\ast }(\func{F}_{\ast}^{l+d}\mu ))}{dm_{1}}%
		\right\vert _{\infty} \\
		&\leq &(\alpha^\zeta) ^{l}||\mu ||_{\infty}+\overline{\alpha }|\func{P}_{f}^{l}(\phi
		_{x})|_{\infty} \\
		&\leq &(\sqrt{\alpha^\zeta }^{-1}+\overline{\alpha }D \sqrt{r}^{-1})\beta _{1}^{n}||\mu ||_{S^{\infty}}
		\\
		&\leq &D_{2}\beta _{1}^{n}||\mu ||_{S^{\infty}},
	\end{eqnarray*}%
	where $D_{2}=(\sqrt{\alpha^\zeta }^{-1}+\overline{\alpha }D \sqrt{r}^{-1})$.
\end{proof}

The next lemma \ref{kjdhkskjfkjskdjf} ensures the existence and uniqueness of an $F$-invariant measure which projects on $m_1$. Since its proof is done by standard arguments (see \cite{AP}, for instance) we skip it. 

\begin{lemma}\label{kjdhkskjfkjskdjf}
	There exists an unique measure $\mu_0$ on $M \times K$ such that for every continuous function $\psi \in C^0 (M \times K)$ it holds

	\begin{equation}
	\lim {\int{\inf_{\gamma \times K} \psi \circ F^n }dm_1(\gamma)}= \lim {\int{\sup_{\gamma \times K} \psi \circ F^n}dm_1 (\gamma)}=\int {\psi}d\mu_0. 
	\end{equation}Moreover, the measure $\mu_0$ is $F$-invariant and $\pi_x{_\ast}\mu_0 = m_1$.
\end{lemma}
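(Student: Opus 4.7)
Plan: For each $\psi \in C^0(\Sigma)$ and $n \geq 0$, define
\[
a_n(\gamma) := \inf_{y\in K} \psi(F^n(\gamma, y)), \qquad b_n(\gamma) := \sup_{y\in K} \psi(F^n(\gamma, y)),
\]
which are bounded measurable functions of $\gamma \in M$. Since $F^n(x,y) = (f^n(x), G^{(n)}(x,y))$ and (G1) makes $G^{(n)}(\gamma,\cdot)$ an $\alpha^n$-contraction for $m_1$-a.e.\ $\gamma$, the image $F^n(\{\gamma\}\times K)$ lies inside the single fiber $\{f^n(\gamma)\}\times K$ and has diameter at most $\alpha^n \diam(K) = \alpha^n$. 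Uniform continuity of $\psi$ on the compact space $\Sigma$ then gives $b_n(\gamma) - a_n(\gamma) \leq \omega_\psi(\alpha^n) \to 0$ for $m_1$-a.e.\ $\gamma$, where $\omega_\psi$ is the modulus of continuity of $\psi$.

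The next step is to show that $\int a_n\, dm_1$ and $\int b_n\, dm_1$ converge to a common limit $\Lambda(\psi)$. Since $F(\{\gamma\}\times K)\subseteq\{f(\gamma)\}\times K$ (more precisely $G(\gamma,K)\subseteq K$), one obtains $a_{n+1}(\gamma)\ge a_n(f(\gamma))$ and $b_{n+1}(\gamma)\le b_n(f(\gamma))$; integrating against $m_1$ and using its $f$-invariance yields that $\int a_n\,dm_1$ is nondecreasing and $\int b_n\,dm_1$ is nonincreasing, both bounded by $|\psi|_\infty$. Hence both converge, and the vanishing gap forces a common limit $\Lambda(\psi)$. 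A convenient representation: for any probability $\nu$ on $\Sigma$ with $\pi_{x\ast}\nu=m_1$, disintegrating the sandwich $a_n(\gamma)\le \psi(F^n(\gamma,y))\le b_n(\gamma)$ along $\mathcal{F}^s$ gives
\[
\int a_n\,dm_1 \;\le\; \int \psi\circ F^n\,d\nu \;\le\; \int b_n\,dm_1,
\]
so $\Lambda(\psi)=\lim_n\int\psi\circ F^n\,d\nu$. Positivity and linearity of $\Lambda: C^0(\Sigma)\to\mathbb{R}$ follow at once from this representation, and $\Lambda(\mathbf{1})=1$; by Riesz representation $\Lambda$ is integration against a unique Borel probability $\mu_0$ on $\Sigma$.

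Finally, the three remaining properties. For $F$-invariance, apply the limit formula to $\psi\circ F$: $\int(\psi\circ F)\circ F^n\,d\nu=\int\psi\circ F^{n+1}\,d\nu\to\Lambda(\psi)$, so $\int \psi\,dF_\ast\mu_0 = \int \psi\,d\mu_0$ for every continuous $\psi$. For the projection identity, take $\psi=h\circ\pi_x$ with $h\in C^0(M)$; then $\psi\circ F^n = h\circ f^n\circ\pi_x$ is constant on each fiber, so $a_n=b_n=h\circ f^n$ and the $f$-invariance of $m_1$ gives $\Lambda(\psi)=\lim\int h\circ f^n\,dm_1=\int h\,dm_1$, i.e.\ $\pi_{x\ast}\mu_0=m_1$. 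Uniqueness is immediate because the two displayed limits in the statement both equal $\Lambda(\psi)$ for every $\psi\in C^0(\Sigma)$, determining $\mu_0$ on all continuous test functions. The one mild subtlety, which I expect to be the only real obstacle, is that (G1) only holds on an $m_1$-full set of fibers; but since $a_n,b_n$ remain bounded by $|\psi|_\infty$ everywhere and enter the argument only through integrals against $m_1$, the exceptional null set is harmless.
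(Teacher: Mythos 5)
The paper does not actually prove this lemma; it states that ``its proof is done by standard arguments (see \cite{AP}, for instance)'' and skips it, so there is no in-paper proof to compare against. Your argument is the natural shrinking-fiber sandwich: $a_n,b_n$ with $b_n-a_n\le\omega_\psi(\alpha^n)$ a.e., monotonicity from $G(\gamma,K)\subseteq K$ and $f$-invariance of $m_1$, the representation $\Lambda(\psi)=\lim\int\psi\circ F^n\,d\nu$ for any $\nu$ with $\pi_{x\ast}\nu=m_1$, and Riesz. The existence of $\mu_0$, the identity $\pi_{x\ast}\mu_0=m_1$, and the uniqueness claim are all correctly established by this route, and your remark about the $m_1$-null exceptional set is correct.

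There is, however, a genuine gap in the $F$-invariance step. You write ``apply the limit formula to $\psi\circ F$'' and conclude $\int\psi\,dF_\ast\mu_0=\int\psi\,d\mu_0$, which tacitly uses $\Lambda(\psi\circ F)=\int\psi\circ F\,d\mu_0$. But the Riesz identity $\Lambda(\cdot)=\int\cdot\,d\mu_0$ is available only on $C^0(\Sigma)$, and $\psi\circ F$ need not be continuous: in the paper's setting $G$ is merely measurable (continuity in $x$ is not assumed until $(G2)$ in Section 7, and even there only piecewise), so $F$ is not continuous and $F_\ast$ is not weak-$\ast$ continuous. Knowing $F^n_\ast\nu\to\mu_0$ and $F^{n+1}_\ast\nu\to\mu_0$ does not by itself yield $F_\ast\mu_0=\mu_0$; the sandwich with $\nu=\mu_0$ only gives $\bigl|\int\psi\circ F\,d\mu_0-\Lambda(\psi)\bigr|\le\omega_\psi(\alpha)$, not equality. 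You flag the a.e.\ fiber issue as the ``only real obstacle,'' but this continuity issue is the actual one. A clean way to close it inside the paper's framework is to use the transfer formula of Corollary \ref{oierew} together with the $\|\cdot\|_W$-contraction of Lemma \ref{opsdas}: setting $\mu_n:=F^n_\ast\nu$, one gets $\esssup_\gamma\|\mu_{n+1,\gamma}-\mu_{n,\gamma}\|_W\le\alpha^\zeta\esssup_\gamma\|\mu_{n,\gamma}-\mu_{n-1,\gamma}\|_W$, so the fiber measures $\mu_{n,\gamma}$ converge in $\|\cdot\|_W$ uniformly a.e.\ to a family $\mu_{\infty,\gamma}$; one identifies $\int\mu_{\infty,\gamma}\,dm_1$ with $\mu_0$ and passes to the limit in the transfer formula (which is $\|\cdot\|_W$-continuous on each fiber) to obtain $F_\ast\mu_0=\mu_0$ without any continuity of $F$.
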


Now we present the proof of Theorem \ref{belongss} which says that the system has a unique invariant measure $\mu _{0}\in S^{\infty}$.

\begin{proof}{of Theorem \ref{belongss}}

	Let $\mu _{0}$ be the $F$-invariant measure 
	such that $\pi _{x\ast }\mu _{0}=m_1$ (which do exist by Lemma \ref{kjdhkskjfkjskdjf}), where $1$ is the unique $%
	f$-invariant density in $H_\zeta$. Suppose that $%
	g:K\longrightarrow \mathbb{R}$ is a $\zeta$-H\"older function such that $%
	|g|_{\infty }\leq 1$ and $H_\zeta(g)\leq 1$. Then, it holds $\left\vert \int {g}%
	d(\mu _{0}|_{\gamma })\right\vert \leq |g|_{\infty }\leq
	1$. Hence, $\mu _{0}\in \mathcal{L}^{\infty }$. Since, $\dfrac{\pi_{x*}\mu_0}{dm_1} \equiv 1$, we have $\mu_0 \in S^\infty$.
	
	The uniqueness follows directly from Proposition \ref{5.8}, since the difference between two probabilities ($\mu _1 - \mu_0$) is a zero average signed measure.
\end{proof}

\section{Spectral gap}\label{ierutierutiuyerit}

In this section, we prove a spectral gap result for the transfer operator applied to our strong space, Theorem \ref{spgapp}. For this, we will directly use the properties
proved in the previous section, and this will give a kind of constructive
proof. We remark that we cannot apply the traditional Hennion, or Ionescu-Tulcea and Marinescu's approach to our function spaces because there is no compact immersion of the strong space into the weak one. This comes from the fact that we are considering the same \textquotedblleft dual of
H\"older\textquotedblright distance (see Definition \ref{wasserstein}) in the contracting direction for both spaces. 

\begin{proof}{of Theorem \ref{spgapp}}

First, let us show there exist $0<\xi <1$ and $K_{1}>0$ such that, for all $%
	n\geq 1$, it holds 
	\begin{equation*}
	||\func{F}_{\ast }^{n}||_{{\mathcal{V}_{s}}\rightarrow {\mathcal{V}_{s}}%
	}\leq \xi ^{n}K_{1}  \label{quaselawww}
	\end{equation*}%
	where ${\mathcal{V}_{s}}$ is the zero average space defined in $($\ref{mathV}%
	$)$. Indeed, consider $\mu \in \mathcal{V}_{s}$ (see \eqref{mathV}) s.t. $%
	||\mu ||_{S^{\infty}}\leq 1$ and for a given $n\in \mathbb{N}$ let $m$ and $0\leq
	d\leq 1$ be the coefficients of the division of $n$ by $2$, i.e., $n=2m+d$.
	Thus $m=\frac{n-d}{2}$. By the Lasota-Yorke inequality (Proposition \ref%
	{lasotaoscilation2}) we have the uniform bound $||\func{F}_{\ast }^{n}\mu
	||_{S^{\infty}}\leq B_{2}+A$ for all $n\geq 1$. Moreover, by Propositions \ref%
	{quasiquasiquasi} and \ref{weakcontral11234} there is some $D_{2}$ such that
	it holds (below, let $\lambda _{0}$ be defined by $\lambda _{0}=\max \{\beta
	_{1},\lambda \}$)%
	\begin{eqnarray*}
		||\func{F}_{\ast }^{n}\mu ||_{S^{\infty}} &\leq &A\lambda ^{m}||\func{F}_{\ast
		}^{m+d}\mu ||_{S^{\infty}}+B_{2}||\func{F}_{\ast }^{m+d}\mu ||_{\infty} \\
		&\leq &\lambda ^{m}A(A+B_{2})+B_{2}||\func{F}_{\ast }^{m}\mu ||_{\infty} \\ 
		&\leq &\lambda ^{m}A(A+B_{2})+B_{2}D_{2}\beta _{1}^{m} \\
		&\leq &\lambda _{0}^{m}\left[ A(A+B_{2})+B_{2}D_{2}\right] \\
		&\leq &\lambda _{0}^{\frac{n-d}{2}}\left[ A(A+B_{2})+B_{2}D_{2}\right] \\
		&\leq &\left( \sqrt{\lambda _{0}}\right) ^{n}\left( \frac{1}{\lambda _{0}}%
		\right) ^{\frac{d}{2}}\left[ A(A+B_{2})+B_{2}D_{2}\right] \\
		&\leq&\xi ^{n}K_{1},
	\end{eqnarray*}%
	where $\xi =\sqrt{\lambda _{0}}$ and $K_{1}=\left( \frac{1}{\lambda _{0}}%
	\right) ^{\frac{1}{2}}\left[ A(A+B_{2})+B_{2}D_{2}\right] $. Thus, we arrive
	at 
	\begin{equation}
	||(\func{F}_{\ast }|_{_{\mathcal{V}_{s}}}){^{n}}||_{S^{\infty}\rightarrow
		S^{\infty}}\leq \xi ^{n}K_{1}.  \label{just}
	\end{equation}
	
	Now, recall that $\func{F}_{\ast }:S^{\infty}\longrightarrow S^{\infty}$ has a unique
	fixed point $\mu _{0}\in S^{\infty}$, which is a probability (see Theorem \ref%
	{belongss}). Consider the operator $\func{P}:S^{\infty}\longrightarrow \left[ \mu
	_{0}\right] $ ($\left[ \mu _{0}\right] $ is the space spanned by $\mu _{0}$%
	), defined by $\func{P}(\mu )=\mu (\Sigma )\mu _{0}$. By definition, $\func{P%
	}$ is a projection and $\dim\func{P}(S^{\infty})=1$. Define the operator%
	\begin{equation*}
	\func{S}:S^{\infty}\longrightarrow \mathcal{V}_{s},
	\end{equation*}%
	by%
	\begin{equation*}
	\func{S}(\mu )=\mu -\func{P}(\mu ),\ \ \ \mathnormal{\forall }\ \ \mu \in
	S^{\infty}.
	\end{equation*}%
	Thus, we set $\func{N}=\func{F}_{\ast }\circ \func{S}$ and observe that, by
	definition, $\func{P}\func{N}=\func{N}\func{P}=0$ and $\func{F}_{\ast }=%
	\func{P}+\func{N}$. Moreover, $\func{N}^{n}(\mu )=\func{F}_{\ast }{^{n}}(%
	\func{S}(\mu ))$ for all $n\geq 1$. Since $\func{S}$ is bounded and $\func{S}%
	(\mu )\in \mathcal{V}_{s}$, we get by (\ref{just}), $||\func{N}^{n}(\mu
	)||_{S^{\infty}}\leq \xi ^{n}K||\mu ||_{S^{\infty}}$, for all $n\geq 1$, where $%
	K=K_{1}||\func{S}||_{S^{\infty}\rightarrow S^{\infty}}$.
\end{proof}

\begin{remark}
	\label{quantitative2} The constant $\xi $ for the map $F$, found in Theorem %
	\ref{spgapp}, is directly related to the coefficients of the Lasota-Yorke
	inequality for the basis map (see Corollary \ref{irytrtrte}) and the rate of convergence to equilibrium of $F$ (see Proposition \ref{5.8}) found before. More precisely, $\xi =\max \{\sqrt{\lambda }%
	,\sqrt{\beta _{1}}\}$. We remark that, from the above proof we also have an
	explicit estimate for $K$ in the exponential convergence, while many
	classical approaches are not suitable for this.
\end{remark}


\subsection{Exponential Decay of Correlations}\label{decay} In this section, we present one of the standard consequences of spectral gap, Theorem \ref{decay1}. We will show how Theorem \ref{spgapp} implies an exponential rate of convergence for the limit $$\lim_{n\to\infty} {C_n(f,g)}=0,$$where $$C_n(f,g):=\left| \int{(g \circ F^n )  f}d\mu_0 - \int{g  }d\mu_0 \int{f  }d\mu_0 \right|,$$ $g: \Sigma \longrightarrow \mathbb{R} $ is a $\zeta$-H\"older function and $f \in \Theta _{\mu _0}$. The sets $\Theta _{\mu _0}$ are defined by $$\Theta _{\mu _0}:= \{ f: \Sigma \longrightarrow \mathbb{R}; f\mu_0 \in S^\infty\},$$ where the measure $f\mu_0$ is defined by $f\mu_0(E):=\int _E{f}d\mu_0$ for all measurable sets $E$.

\begin{proof}{of Theorem \ref{decay1}}
	
	Let $g: \Sigma \longrightarrow \mathbb{R} $ be a $\zeta$-H\"older function and $f \in \Theta _{\mu _0}$. By Theorem \ref{spgapp}, we have
	
	\begin{align*}
		\left| \int{(g \circ F^n )  f}\right.\mu_0 - \int{g  }d\mu_0 &\left.\int{f  }d\mu_0 \right| 
		=\left| \int{g  }d \func{F_*}{^n} (f\mu_0) - \int{g  }d\func{P}(f\mu_0) \right| 
		\\&\leq\left|\left|  \func{F_*}{^n} (f\mu_0) - \func{P}(f\mu_0) \right|\right|_W \max\{H_\zeta(g), ||g||_\infty\}
		\\&=\left|\left|  \func{N}{^n}(f\mu_0) \right|\right|_W \max\{H_\zeta(g), ||g||_\infty\}
		\\&\leq\left|\left|  \func{N}{^n}(f\mu_0) \right|\right|_{S^\infty} \max\{H_\zeta(g), ||g||_\infty\}
		\\&\leq||f \mu _0||_{S^{\infty}} K |g|_{\zeta}  \xi ^{n}.\qedhere
	\end{align*}
\end{proof}

In Theorem \ref{disisisii} we will see, under some further assumptions on the system, that the sets $ \Theta _{\mu _0} $ contain the set of $\zeta$-H\"older functions on $\Sigma$.

\section{H\"older measures}\label{hhhhhhhh}

In this section we will prove Theorem \ref{regg}. We suppose that $G$ satisfies the additional property (G2) stated below. Moreover, besides satisfying equation (\ref{kdljfhkdjfkasd}), the constant $L$ mentioned in (f1) and (f3) is also supposed to be close enough to $1$ such that $(\alpha \cdot L)^\zeta<1$ (or $\alpha$ is close enough to $0$). This is not restrictive for the aimed examples and it is clearly satisfied by Example \ref{sesprowerpo}.

\begin{enumerate}
	\item [(G2)] Let $P_1, \cdots, P_{\deg(f)}$ be the partition of $M$ given in Remark \ref{chkjg}. Suppose that  
	\begin{equation}\label{oityy}
	|G_i|_\zeta:= \sup _y\sup_{x_1, x_2 \in P_i} \dfrac{d_2(G(x_1,y), G(x_2,y))}{d_1(x_1,x_2)^\zeta}< \infty.
	\end{equation}
\end{enumerate}And denote by $|G|_\zeta$ the following constant
\begin{equation}\label{jdhfjdh}
|G|_\zeta := \max_{i=1, \cdots, s} \{|G_i|_\zeta\}.
\end{equation}

\begin{remark}
	The condition (G2) means that $G$ can be discontinuous on the sets $\partial P_i \times K$, for all $i=1, \cdots, \deg(f)$, where $\partial P_i$ denotes the boundary of $P_i$.
\end{remark}

We have seen that a positive measure on $M \times K$ can be disintegrated along the stable
leaves $\mathcal{F}^s$ in a way that we can see it as a family of positive measures on $M$, $\{\mu |_\gamma\}_{\gamma \in \mathcal{F}^s }$. Since there is a one-to-one correspondence between $\mathcal{F}^s$  and $M$, this defines a  path
in the metric space of positive measures ($\mathcal{SB}(K)$) defined on $K$, $M \longmapsto \mathcal{SB}(K)$, where $\mathcal{SB}(K)$ is endowed with the Wasserstein-Kantorovich-like metric (see definition \ref{wasserstein}). 
It will be convenient to use a functional notation and denote such a path by  
$\Gamma_{\mu } : M \longrightarrow \mathcal{SB}(K)$  defined almost everywhere by $\Gamma_{\mu } (\gamma) = \mu|_\gamma$, where $(\{\mu _{\gamma }\}_{\gamma \in M},\phi_{x})$ is some disintegration of $\mu$.
However, since such a disintegration is defined $\widehat{\mu}$-a.e. $\gamma \in M$, the path $\Gamma_\mu$ is not unique. For this reason we define more precisely $\Gamma_{\mu } $ as the class of almost everywhere equivalent paths corresponding to $\mu$.

\begin{definition}
	Consider a positive Borel measure $\mu$ on $M \times K$ and a disintegration  $\omega=(\{\mu _{\gamma }\}_{\gamma \in M},\phi
	_x)$, where $\{\mu _{\gamma }\}_{\gamma \in M }$ is a family of
	probabilities on $M \times K$ defined $\widehat{\mu}$-a.e. $\gamma \in M$ (where $\widehat{\mu} := \pi_x{_*}\mu=\phi _x m_1$) and $\phi
	_x:\Sigma_A^+\longrightarrow \mathbb{R}$ is a non-negative marginal density. Denote by $\Gamma_{\mu }$ the class of equivalent paths associated to $\mu$ 
	\begin{equation*}
	\Gamma_{\mu }=\{ \Gamma^\omega_{\mu }\}_\omega,
	\end{equation*}
	where $\omega$ ranges on all the possible disintegrations of $\mu$ and $\Gamma^\omega_{\mu }: M\longrightarrow \mathcal{SB}(K)$ is the map associated to a given disintegration, $\omega$:
	$$\Gamma^\omega_{\mu }(\gamma )=\mu |_{\gamma } = \pi _{\gamma, y} ^\ast \phi _x
	(\gamma)\mu _\gamma .$$
\end{definition}Let us call the set on which $\Gamma_{\mu }^\omega $ is defined by $I_{\Gamma_{\mu }^\omega } \left( \subset M\right)$.



\begin{definition}For a given $0<\zeta <1$, a disintegration $\omega$ of $\mu$  and its functional representation $\Gamma_{\mu }^\omega $ we define the \textbf{$\zeta$-H\"older constant of $\mu$ associated to $\omega$} by

	\begin{equation}\label{Lips1}
	|\mu|_\zeta ^\omega := \esssup _{\gamma_1, \gamma_2 \in I_{\Gamma_{\mu }^\omega}} \left\{ \dfrac{||\mu|_{\gamma _1}- \mu|_{\gamma _2}||_W}{d_1 (\gamma _1, \gamma _2)^\zeta}\right\}.
	\end{equation}Finally, we define the \textbf{$\zeta$-H\"older constant} of the positive measure $\mu$ by

	\begin{equation}\label{Lips2}
	|\mu|_\zeta :=\displaystyle{\inf_{ \Gamma_{\mu }^\omega \in \Gamma_{\mu } }\{|\mu|_\zeta ^\omega\}}.
	\end{equation}
	
	\label{Lips3}
\end{definition}

\begin{remark}
	When no confusion is possible, to simplify the notation, we denote $\Gamma_{\mu }^\omega (\gamma )$ just by $\mu |_{\gamma } $.
\end{remark}

\begin{definition}
	From the Definition \ref{Lips3} we define the set of the $\zeta$-H\"older positive measures $\mathcal{H} _\zeta^{+}$ as
	
	\begin{equation}
	\mathcal{H} _\zeta^{+}=\{\mu \in \mathcal{AB}:\mu \geq 0,|\mu |_\zeta <\infty \}.
	\end{equation}
\end{definition}

For the next lemma, for a given path, $\Gamma _\mu$ which represents the measure $\mu$, we define for each $\gamma \in I_{\Gamma_{\mu }^\omega }\subset M$, the map

\begin{equation}
\mu _F(\gamma) := \func{F_\gamma }_*\mu|_\gamma,
\end{equation}where $F_\gamma :K \longrightarrow K$ is defined as

\begin{equation}\label{poier}
F_\gamma (y) = \pi_y \circ F \circ {(\pi _y|_\gamma)} ^{-1}(y)
\end{equation}and $\pi_y : M\times K \longrightarrow  K$ is the projection $\pi_y(x,y)=y$.

\begin{lemma}\label{apppoas}
	Suppose that $F:\Sigma \longrightarrow \Sigma$ satisfies (G1) and (G2). Then, for all $\mu \in \mathcal{H} _\zeta^{+} $ which satisfy $\phi _x = 1$ $m_1$-a.e., it holds $$||\func{F}%
	_{x  \ast }\mu |_{x  } - \func{F}%
	_{y \ast }\mu |_{y  }||_W \leq \alpha^\zeta |\mu|_\zeta  d_1(x, y)^\zeta  + |G|_\zeta d_1(x, y)^\zeta ||\mu ||_\infty,$$ for all $x,y \in P_i$ and all $i=1, \cdots, \deg(f)$.
\end{lemma}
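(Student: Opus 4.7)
The plan is to prove the inequality by dualizing: take a test function $g : K \to \mathbb{R}$ with $|g|_\infty \le 1$ and $H_\zeta(g) \le 1$, bound $|\int g\, d(\func{F}_{x\ast}\mu|_x - \func{F}_{y\ast}\mu|_y)|$ by the two claimed summands, and then take the supremum over $g$ to recover the $W$-norm. A change of variables rewrites the integral as $\int g\circ F_x\, d\mu|_x - \int g\circ F_y\, d\mu|_y$, and inserting $\pm\int g\circ F_x\, d\mu|_y$ splits it as
\[
(\mathrm{I}) := \int g\circ F_x\, d(\mu|_x - \mu|_y), \qquad (\mathrm{II}) := \int (g\circ F_x - g\circ F_y)\, d\mu|_y.
\]
Term (I) will give the $\alpha^\zeta |\mu|_\zeta$ summand and term (II) the $|G|_\zeta$ summand, so the two pieces line up with the two contributions.

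To handle (I), I would mimic the argument of Lemma \ref{opsdas}: condition (G1) makes $F_x$ an $\alpha$-contraction, so $H_\zeta(g\circ F_x)\le \alpha^\zeta$ and, since $\diam(K)=1$, one can write $g\circ F_x = \theta + \alpha^\zeta\tilde g$ where $\theta$ is a real constant and $\tilde g$ satisfies $|\tilde g|_\infty\le 1$, $H_\zeta(\tilde g)\le 1$. Here the assumption $\phi_x\equiv 1$ is crucial: it forces $(\mu|_x - \mu|_y)(K) = \phi_x(x)-\phi_x(y)=0$, killing the contribution of the constant $\theta$. What remains is $\alpha^\zeta \int \tilde g\, d(\mu|_x-\mu|_y) \le \alpha^\zeta \|\mu|_x-\mu|_y\|_W$, and by the very definition \eqref{Lips2} of $|\mu|_\zeta$, this is bounded (up to a choice of representative of $\Gamma_\mu$) by $\alpha^\zeta|\mu|_\zeta d_1(x,y)^\zeta$.

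For (II), the idea is to bound the integrand pointwise using condition (G2) together with the H\"older regularity of $g$: since $x,y\in P_i$, we have $d_2(G(x,z),G(y,z))\le |G|_\zeta d_1(x,y)^\zeta$ uniformly in $z$, and combining this with $H_\zeta(g)\le 1$ yields $|g(F_x(z)) - g(F_y(z))| \le |G|_\zeta d_1(x,y)^\zeta$. Integrating this constant bound against the positive measure $\mu|_y$, whose total mass is $\phi_x(y)=1\le \|\mu\|_\infty$, gives $|(\mathrm{II})|\le |G|_\zeta d_1(x,y)^\zeta \|\mu\|_\infty$.

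Summing the estimates for (I) and (II) and taking the supremum over admissible $g$ yields the claim for this particular pair $x,y\in P_i$; passing to essential suprema over the leaf coordinate produces the statement as written. The only delicate step is the zero-mass cancellation in (I), which is where the hypothesis $\phi_x\equiv 1$ enters in an essential way; the treatment of (II) is a direct Hölder estimate driven entirely by (G2), and the restriction $x,y\in P_i$ ensures that $G(\cdot,z)$ is Hölder on the relevant segment.
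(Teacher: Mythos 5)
Your proof is essentially the paper's, just expressed at the test-function level: the paper inserts $\func{F}_{x\ast}\mu|_y$ as a pivot and applies the triangle inequality in the $W$-norm, invoking Lemma \ref{opsdas} for the first summand (which you rederive inline rather than cite) and dualizing the second summand against a test function, exactly as you do, before applying (G2). The two decompositions are equivalent and the resulting estimates coincide term by term. One remark, which applies equally to the paper's own write-up: $H_\zeta(g)\le 1$ gives $|g(F_x(z))-g(F_y(z))|\le d_2(G(x,z),G(y,z))^\zeta\le\bigl(|G|_\zeta\, d_1(x,y)^\zeta\bigr)^\zeta$, so strictly speaking the second summand ought to carry $|G|_\zeta^\zeta\, d_1(x,y)^{\zeta^2}\|\mu\|_\infty$ rather than $|G|_\zeta\, d_1(x,y)^\zeta\|\mu\|_\infty$. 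The paper silently bounds $|g(a)-g(b)|$ by $d_2(a,b)$ at the same step, i.e.\ treats the test function as $1$-Lipschitz rather than $\zeta$-H\"older, so you have faithfully reproduced the paper's argument, inherited shortcut included.
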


\begin{proof}

	Since $(\mu|_x - \mu|_y)(K)=0$ ($\phi _x = 1$ $m_1$-a.e.), by Lemma \ref{opsdas}, it holds
	\begin{eqnarray*}
		||\func{F}%
		_{x  \ast }\mu |_{x  } - \func{F}%
		_{y \ast }\mu |_{y  }||_W &\leq & ||\func{F}%
		_{x  \ast }\mu |_{x  } - \func{F}%
		_{x \ast }\mu |_{y  }||_W + ||\func{F}%
		_{x  \ast }\mu |_{y  } - \func{F}%
		_{y \ast }\mu |_{y  }||_W
		\\&\leq & \alpha^\zeta||\mu |_{x  } - \mu |_{y }||_W + ||\func{F}%
		_{x  \ast }\mu |_{y  } - \func{F}%
		_{y \ast }\mu |_{y  }||_W
		\\&\leq & \alpha^\zeta |\mu|_\zeta d_1(x,y)^\zeta + ||\func{F}%
		_{x  \ast }\mu |_{y  } - \func{F}%
		_{y \ast }\mu |_{y  }||_W.
	\end{eqnarray*}Let us estimate the second summand $||\func{F}%
	_{x  \ast }\mu |_{y  } - \func{F}%
	_{y \ast }\mu |_{y  }||_W$. To do it, let $g:K \longrightarrow \mathbb{R}$ be a $\zeta$-H\"older function s.t. $H_\zeta(g), |g|_\infty \leq 1$. By equation (\ref{poier}), we get 
	
	\begin{align*}
	\begin{split}
		\left|\int gd(\func{F}_{x\ast}\mu|_y)-\int gd(\func{F}_{y\ast}\mu|_y) \right|&=\left|\int\!{g(G(x,z))}d(\mu|_y)(z)\right.\\
		&\qquad\left.-\int\!{g(G(y,z))}d(\mu|_y)(z) \right|
		\end{split}
		\\&\leq\int{\left|G(x,z)-G(y,z)\right|}d(\mu|_y)(z)
		\\&\leq|G|_\zeta d_1(x,y)^\zeta \int{1}d(\mu|_y)(z) 
		\\&\leq|G|_\zeta d_1(x,y)^\zeta ||\mu|_y||_W.
	\end{align*}Thus, taking the supremum over $g$ and the essential supremum over $y$, we get 
	
	\begin{equation*}
	||\func{F}%
	_{x  \ast }\mu |_{y  } - \func{F}%
	_{y \ast }\mu |_{y  }||_W \leq|G|_\zeta d_1(x,y)^\zeta ||\mu||_\infty.\qedhere
	\end{equation*}
	
\end{proof}

For the next proposition and henceforth, for a given path $\Gamma _\mu ^\omega \in \Gamma_{ \mu }$ (associated with the disintegration $\omega = (\{\mu _\gamma\}_\gamma, \phi _x)$, of $\mu$), unless written otherwise, we consider the particular path $\Gamma_{\func{F_*}\mu} ^\omega \in \Gamma_{\func{F_*}\mu}$ defined by the Corollary \ref{oierew}, by the expression

\begin{equation}
\Gamma_{\func{F_*}\mu} ^\omega (\gamma)=\sum_{i=1}^{\deg(f)}{\func{F}%
	_{\gamma _i \ast }\Gamma _\mu ^\omega (\gamma_i)\rho _i(\gamma _i)}\ \ m_{1}%
\mathnormal{-a.e.}\ \ \gamma \in M.  \label{niceformulaaareer}
\end{equation}Recall that $\Gamma_{\mu} ^\omega (\gamma) = \mu|_\gamma:= \pi_{y*}(\phi_{x}(\gamma)\mu _\gamma)$ and in particular $\Gamma_{\func{F_*}\mu} ^\omega (\gamma) = (\func{F_*}\mu)|_\gamma = \pi_{y*}(\func{P}_f\phi_x(\gamma)\mu _\gamma)$, where $\phi_x = \dfrac{d \pi _{x*} \mu}{dm_1}$ and $\func{P}_f$ is the Perron-Frobenius operator of $f$.
\begin{proposition}\label{iuaswdas}
	If $F:\Sigma \longrightarrow \Sigma$ satisfies (f1), (f2), (f3), (G1), (G2) and $\alpha \cdot L^\zeta<1$, then there exist $0<\beta<1$ and $D >0$, such that for all $\mu \in \mathcal{H} _\zeta^{+} $ which satisfy $\phi _x = 1$ $m_1$-a.e. and for all $\Gamma ^\omega _\mu \in \Gamma _\mu$, it holds $$|\Gamma_{\func{F_*} } ^\omega\mu|_{\zeta}  \leq \beta |\Gamma_{\mu}^\omega|_\zeta + D||\mu||_\infty,$$ for $\beta:= (\alpha L)^\zeta$ and $D:=\{\epsilon _\rho L^\zeta + |G|_ \zeta L^\zeta\}$.
\end{proposition}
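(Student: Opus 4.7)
The plan is to start from the explicit formula for $(\func{F}_{\ast}\mu)|_\gamma$ given by Corollary \ref{oierew}, namely $(\func{F}_{\ast}\mu)|_\gamma=\sum_{i=1}^{\deg(f)}\func{F}_{\gamma_i\ast}\mu|_{\gamma_i}\,\rho_i(\gamma_i)$, where $\gamma_i=f_i^{-1}(\gamma)\in P_i$. Fix two points $\gamma^1,\gamma^2\in M$ where the disintegration is defined, and denote by $\gamma_i^j:=f_i^{-1}(\gamma^j)$ the corresponding preimages in $P_i$. The first step is to decompose, branch by branch, the difference
\begin{equation*}
(\func{F}_{\ast}\mu)|_{\gamma^1}-(\func{F}_{\ast}\mu)|_{\gamma^2}
=\sum_{i=1}^{\deg(f)}\Bigl[\rho_i(\gamma_i^1)\bigl(\func{F}_{\gamma_i^1\ast}\mu|_{\gamma_i^1}-\func{F}_{\gamma_i^2\ast}\mu|_{\gamma_i^2}\bigr)
+\bigl(\rho_i(\gamma_i^1)-\rho_i(\gamma_i^2)\bigr)\func{F}_{\gamma_i^2\ast}\mu|_{\gamma_i^2}\Bigr]
\end{equation*}
by the standard add-and-subtract trick, and to take $\|\cdot\|_W$-norm in each summand.

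For the first type of summand I would apply Lemma \ref{apppoas} (which is applicable because $\gamma_i^1,\gamma_i^2$ lie in the same $P_i$) to obtain
\begin{equation*}
\|\func{F}_{\gamma_i^1\ast}\mu|_{\gamma_i^1}-\func{F}_{\gamma_i^2\ast}\mu|_{\gamma_i^2}\|_W\le\alpha^\zeta|\mu|_\zeta^\omega\,d_1(\gamma_i^1,\gamma_i^2)^\zeta+|G|_\zeta\,d_1(\gamma_i^1,\gamma_i^2)^\zeta\|\mu\|_\infty,
\end{equation*}
and then use hypothesis (f1) (since $L(x)\le L$ for all $x\in M$, because $L\ge 1>\sigma^{-1}$) to get $d_1(\gamma_i^1,\gamma_i^2)\le L\,d_1(\gamma^1,\gamma^2)$. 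For the second type of summand I would use $\|\func{F}_{\gamma_i^2\ast}\mu|_{\gamma_i^2}\|_W\le\|\mu|_{\gamma_i^2}\|_W\le\|\mu\|_\infty$ (from Lemma \ref{niceformulaac}) together with $|\rho_i(\gamma_i^1)-\rho_i(\gamma_i^2)|\le H_\zeta(\rho)L^\zeta d_1(\gamma^1,\gamma^2)^\zeta$, since $\rho_i=\rho|_{P_i}$.

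Summing over $i$, the first-type summands produce the factor $\sum_i\rho_i(\gamma_i^1)=\func{P}_f\mathbf 1(\gamma^1)=1$ (using that $m_1$ is $f$-invariant), yielding the contribution $(\alpha L)^\zeta|\mu|_\zeta^\omega+|G|_\zeta L^\zeta\|\mu\|_\infty$ after dividing by $d_1(\gamma^1,\gamma^2)^\zeta$. For the second-type summands, the key observation is the combinatorial bound $\deg(f)\inf\rho\le\sum_i\rho(\gamma_i)=1$, which combined with (f3) (specifically $H_\zeta(\rho)<\epsilon_\rho\inf\rho$) gives
\begin{equation*}
\sum_{i=1}^{\deg(f)}H_\zeta(\rho)L^\zeta\|\mu\|_\infty\le\deg(f)\,\epsilon_\rho\inf\rho\cdot L^\zeta\|\mu\|_\infty\le\epsilon_\rho L^\zeta\|\mu\|_\infty.
\end{equation*}
Adding the two bounds and taking the essential supremum over $\gamma^1,\gamma^2$ yields $|\Gamma_{\func{F_*}\mu}^\omega|_\zeta\le(\alpha L)^\zeta|\Gamma_\mu^\omega|_\zeta+(\epsilon_\rho+|G|_\zeta)L^\zeta\|\mu\|_\infty$, which is the stated inequality. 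The hypothesis $(\alpha L)^\zeta<1$ guarantees $\beta<1$.

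I do not expect a serious obstacle in this argument: the delicate point is the bookkeeping showing that the sum of the $\rho$-difference contributions collapses from a factor of $\deg(f)\cdot H_\zeta(\rho)$ to $\epsilon_\rho$ by means of the inequality $\deg(f)\inf\rho\le 1$ combined with $H_\zeta(\rho)<\epsilon_\rho\inf\rho$. This is precisely what makes hypothesis (f3) the right kind of smallness condition on $\rho$ for the Lasota--Yorke-type bound in the H\"older seminorm $|\cdot|_\zeta$.
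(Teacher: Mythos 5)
Your argument is correct and follows essentially the same route as the paper: decompose $(\func{F}_\ast\mu)|_{\gamma^1}-(\func{F}_\ast\mu)|_{\gamma^2}$ branch-by-branch with the add-and-subtract trick (you put the mixed term $\rho_i(\gamma_i^1)\func{F}_{\gamma_i^2\ast}\mu|_{\gamma_i^2}$ where the paper uses $\rho_i(y_i)\func{F}_{x_i\ast}\mu|_{x_i}$, which is the same split up to swapping the roles of the two points), control the ``moving Wasserstein'' piece with Lemma~\ref{apppoas} and $d_1(\gamma_i^1,\gamma_i^2)\le L\,d_1(\gamma^1,\gamma^2)$, and control the $\rho$-difference piece using $H_\zeta(\rho)<\epsilon_\rho\inf\rho$ together with $\sum_i\rho(\gamma_i)=1$. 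The only cosmetic difference is that you package the last step as $\deg(f)\inf\rho\le 1$ while the paper inserts $\inf\rho\le\rho(x_i)$ inside the sum and then uses $\sum_i\rho(x_i)=1$; both produce the same bound and the same constants $\beta=(\alpha L)^\zeta$ and $D=(\epsilon_\rho+|G|_\zeta)L^\zeta$.
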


\begin{proof}  
	
	In this proof, we denote a leaf by $x$ and $y$, instead of $\gamma$. We assume that the pre-images $x_i$ and $y_i$ belong to the same atom $P_i$ for all $i=1, \cdots, \deg(f)$, stated in (G2). This holds for $m_1$-a.e. $x,y \in M$. Moreover, $\sum_{i=1}^{\deg(f)}{\rho _i(y _i)}=1$, since $m_1$ is $f$-invariant. Note that
	\begin{eqnarray*}
		(\func{F_*}\mu)|_{x} - (\func{F_*}\mu)|_{y}&=&\sum_{i=1}^{\deg(f)}{\func{F}%
			_{x _i \ast }\mu |_{x _i }\rho _i(x _i)} - \sum_{i=1}^{\deg(f)}{\func{F}%
			_{y _i \ast }\mu |_{y _i }\rho _i(y _i)}
		\\&=&\sum_{i=1}^{\deg(f)}{\func{F}%
			_{x _i \ast }\mu |_{x _i }\rho _i(x _i)} - \sum_{i=1}^{\deg(f)}{\func{F}%
			_{x _i \ast }\mu |_{x _i }\rho _i(y _i)}
		\\&+&\sum_{i=1}^{\deg(f)}{\func{F}%
			_{x _i \ast }\mu |_{x _i }\rho _i(y _i)} - \sum_{i=1}^{\deg(f)}{\func{F}%
			_{y _i \ast }\mu |_{y _i }\rho _i(y _i)}.
	\end{eqnarray*}Thus, by Lemma \ref{niceformulaac}, we have 
	
	\begin{equation}\label{skdjghdjfjkhd}
	||(\func{F_*}\mu)|_{x} - (\func{F_*}\mu)|_{y}||_W \leq \func{I_1} + \func{I_2},
	\end{equation}where
	\begin{equation}\label{utrp}
	\func{I_1}= \sum_{i=1}^{\deg(f)}{||\mu |_{x _i } ||_W  |\rho _i(x _i) - \rho _i(y _i)|}
	\end{equation}and
	\begin{equation}\label{puipu}
	\func{I_2}= \sum_{i=1}^{\deg(f)}{|\rho _i(y _i)| ||\func{F}%
		_{x _i \ast }\mu |_{x _i } - \func{F}%
		_{y _i \ast }\mu |_{y _i }||_W}.
	\end{equation}Now, we estimate $\func{I_1}$ and $\func{I_2}$.

	By (f3), we have
	\begin{eqnarray*}
		\func I_1 &\leq& \esssup _x ||\mu |_{x } ||_W\sum_{i=1}^{\deg(f)}{  |\rho _i(x _i) - \rho _i(y _i)|}
		\\&\leq& ||\mu  ||_\infty  \sum_{i=1}^{\deg(f)}{H_\zeta(\rho) d_1(x_i,y_i)^\zeta}
		\\&\leq& ||\mu  ||_\infty  \sum_{i=1}^{\deg(f)}{\epsilon _\rho \inf{\rho} d_1(x_i,y_i)^\zeta}
		\\&\leq&  ||\mu  ||_\infty \epsilon _\rho \sum_{i=1}^{\deg(f)}{ \inf{\rho} d_1(x_i,y_i)^\zeta}
		\\&\leq&  ||\mu  ||_\infty \epsilon _\rho \sum_{i=1}^{\deg(f)}{ \rho (x_i) d_1(x_i,y_i)^\zeta}
		\\&\leq&  ||\mu  ||_\infty \epsilon _\rho L ^\zeta d_1(x,y)^\zeta \sum_{i=1}^{\deg(f)}{ \rho (x_i) }
		\\&\leq&  ||\mu  ||_\infty \epsilon _\rho L ^\zeta d_1(x,y)^\zeta.
	\end{eqnarray*}Thus
	
	\begin{equation}\label{glhkgf}
	\func I_1 \leq ||\mu  ||_\infty \epsilon _\rho L ^\zeta d_1(x,y)^\zeta.
	\end{equation}

	Let us estimate $I_2$. Since $x_i, y_i \in P_i$, Lemma \ref{apppoas} yields
	\begin{eqnarray*}
		\func{I_2}&=& \sum_{i=1}^{\deg(f)}{|\rho _i(y _i)| ||\func{F}%
			_{x _i \ast }\mu |_{x _i } - \func{F}%
			_{y _i \ast }\mu |_{y _i }||_W}
		\\&\leq& \sum_{i=1}^{\deg(f)}{\rho _i(y _i)\left(\alpha ^\zeta |\mu|_\zeta d_1(x_i, y_i)^\zeta + |G|_\zeta d_1(x_i, y_i)^\zeta ||\mu |_{y_i}||_W\right)}
		\\&\leq& \left(\alpha ^\zeta |\mu|_\zeta L^\zeta d_1(x, y)^\zeta  + |G|_\zeta L^\zeta d_1(x, y)^\zeta ||\mu ||_\infty \right)\sum_{i=1}^{\deg(f)}{\rho _i(y _i)}
		\\&=& \alpha ^\zeta |\mu|_\zeta L^\zeta d_1(x, y)^\zeta  + |G|_\zeta L^\zeta d_1(x, y)^\zeta ||\mu ||_\infty. 
	\end{eqnarray*}Hence 
	\begin{equation}\label{esti2}
	\func{I_2} \leq \alpha ^\zeta |\mu|_\zeta L^\zeta d_1(x, y)^\zeta  + |G|_\zeta L^\zeta d_1(x, y)^\zeta ||\mu ||_\infty.
	\end{equation}
	
	By equations (\ref{skdjghdjfjkhd}), (\ref{glhkgf}) and (\ref{esti2}), we have
	
	\begin{equation*}
	|\Gamma_{\func{F_*}\mu}^\omega|_{\zeta}  \leq \beta |\Gamma_{\mu}^\omega|_\zeta + D||\mu||_\infty,
	\end{equation*}$\beta:= (\alpha L)^\zeta$ and $D:=\{\epsilon _\rho L^\zeta + |G|_ \zeta L^\zeta\}$.

\end{proof}

Iterating the inequality $|\Gamma_{\func{F_*}\mu}^\omega|_{\zeta}  \leq \beta |\Gamma_{\mu}^\omega|_\zeta + D||\mu||_\infty$ obtained in Proposition \ref{iuaswdas}, a standard computation yields the next result, the proof of which is omitted.

\begin{corollary}\label{kjdfhkkhfdjfh}
	Suppose that $F:\Sigma \longrightarrow \Sigma$ satisfies (f1), (f2), (f3), (G1), (G2) and $(\alpha \cdot L)^\zeta<1$. Then, for all $\mu \in \mathcal{H} _\zeta^{+} $ which satisfy $\phi _x = 1$ $m_1$-a.e. and $||\func{F_*}\mu||_\infty \leq ||\mu||_\infty$, it holds 
	\begin{equation}\label{erkjwr}
	|\Gamma_{\func{F_*}^n\mu}^\omega|_{\zeta}  \leq \beta^n |\Gamma _\mu^\omega|_\zeta + \dfrac{D}{1-\beta}||\mu||_\infty,
	\end{equation}
	for all $n\geq 1$, where $\beta$ and $D$ are from Proposition \ref{iuaswdas}.
\end{corollary}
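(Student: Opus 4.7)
The plan is to prove the corollary by a straightforward induction on $n$, where the single-step estimate from Proposition \ref{iuaswdas} is applied repeatedly and the resulting geometric series is summed. The main content is verifying that the hypotheses of Proposition \ref{iuaswdas} propagate under iteration by $\func{F}_\ast$, so that the inequality remains available at every step.

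First I would verify the three propagation facts that make iteration legal. Let $\mu \in \mathcal{H}_\zeta^+$ with $\phi_x \equiv 1$ $m_1$-a.e. and $\|\func{F}_\ast\mu\|_\infty \le \|\mu\|_\infty$. Writing $\mu_n := \func{F}_\ast^n \mu$ and $\phi_x^{(n)} := d\pi_{x\ast}\mu_n/dm_1$, formula \eqref{1} of Lemma \ref{transformula} gives $\phi_x^{(n+1)} = \func{P}_f(\phi_x^{(n)})$; since $m_1$ is $f$-invariant, $\func{P}_f(1)=1$, so an easy induction shows $\phi_x^{(n)} \equiv 1$ $m_1$-a.e. for all $n$. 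Next, Proposition \ref{weakcontral11234} gives $\|\mu_{n+1}\|_\infty \le \|\mu_n\|_\infty$, hence by induction $\|\mu_n\|_\infty \le \|\mu\|_\infty$ for all $n$. Finally, Proposition \ref{iuaswdas} itself, applied once, shows that $|\Gamma^\omega_{\mu_1}|_\zeta < \infty$ whenever $|\Gamma^\omega_\mu|_\zeta < \infty$, so each iterate lies in $\mathcal{H}_\zeta^+$ and Proposition \ref{iuaswdas} may be reapplied.

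With these three facts in hand, I set $a_n := |\Gamma^\omega_{\mu_n}|_\zeta$ and apply Proposition \ref{iuaswdas} to $\mu_n$ (with the path representative inherited from Corollary \ref{oierew} at each step), obtaining the one-step recursion
\begin{equation*}
a_{n+1} \;\le\; \beta\, a_n \;+\; D\,\|\mu_n\|_\infty \;\le\; \beta\, a_n \;+\; D\,\|\mu\|_\infty.
\end{equation*}
Iterating this recursion starting from $a_0 = |\Gamma^\omega_\mu|_\zeta$ yields
\begin{equation*}
a_n \;\le\; \beta^n a_0 \;+\; D\,\|\mu\|_\infty \sum_{k=0}^{n-1} \beta^k \;\le\; \beta^n |\Gamma^\omega_\mu|_\zeta \;+\; \frac{D}{1-\beta}\,\|\mu\|_\infty,
\end{equation*}
where we used $\beta = (\alpha L)^\zeta < 1$ to sum the geometric series, which gives exactly \eqref{erkjwr}.

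There is no real obstacle beyond bookkeeping; the only delicate point is that the path $\Gamma^\omega_{\mu_n}$ at each step must be the specific representative produced by Corollary \ref{oierew} from the representative at the previous step, so that Proposition \ref{iuaswdas} applies literally to $(\Gamma^\omega_{\mu_n}, \Gamma^\omega_{\mu_{n+1}})$ rather than to some other element of the equivalence class. Since $\Gamma_\mu$ is defined up to $m_1$-null sets and the formula in Corollary \ref{oierew} is a pointwise almost everywhere identity, this choice is consistent and well defined at every step.
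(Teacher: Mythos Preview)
Your proof is correct and follows exactly the approach the paper intends: the paper omits the proof entirely, stating only that it is a ``standard computation'' obtained by iterating the one-step inequality of Proposition \ref{iuaswdas}. You have supplied precisely those details, including the verification that the hypotheses $\phi_x\equiv 1$, $\|\cdot\|_\infty$-boundedness, and membership in $\mathcal{H}_\zeta^+$ propagate under $\func{F}_\ast$, which is what makes the iteration legitimate.
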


\begin{remark}\label{kjedhkfjhksjdf}
	Taking the infimum over all paths $\Gamma_{ \mu } ^\omega  \in \Gamma_{ \mu }$ and all $\Gamma_{\func{F_*}^n\mu}^\omega  \in \Gamma_{\func{F_*}^n\mu}$ on both sides of inequality (\ref{erkjwr}), we get 
	
	\begin{equation}\label{fljghlfjdgkdg}
	|\func{F_*}^n\mu|_{\zeta}  \leq \beta^n |\mu|_\zeta + \dfrac{D}{1-\beta}||\mu||_\infty. 
	\end{equation}The above Equation (\ref{fljghlfjdgkdg}) will give a uniform bound (see the proof of Theorem \ref{riirorpdf}) for the H\"older's constant of the measure $\func {F_*}^{n} m$, for all $n$, where $m$ is defined as the product $m=m_1 \times \nu$, for a fixed probability measure $\nu$ on $K$. The uniform bound will be useful later on (see Theorem \ref{disisisii}).
	
\end{remark}

\begin{remark}\label{riirorpdf}
	Consider the probability measure $m$ defined in Remark \ref{kjedhkfjhksjdf}, i.e., $m=m_1 \times \nu$, where $\nu$ is a given probability measure on $K$ and $m_1$ is the $f$-invariant measure fixed in the subsection \ref{hf}. Besides that, consider its trivial disintegration $\omega_0 =(\{m_{\gamma}  \}_{\gamma}, \phi_x)$, given by $m_\gamma = \func{\pi _{y,\gamma}^{-1}{_*}}\nu$, for all $\gamma$ and $\phi _x \equiv 1$. According to this definition, it holds that 
	\begin{equation*}
	m|_\gamma = \nu, \ \ \forall \ \gamma.
	\end{equation*}In other words, the path $\Gamma ^{\omega _0}_m$ is constant: $\Gamma ^{\omega _0}_m (\gamma)= \nu$ for all $\gamma$.  Moreover, for each $n \in \mathbb{N}$, let $\omega_n$ be the particular disintegration of the measure $\func{F{_\ast }}^nm$ defined from $\omega_0$ as an application of Lemma \ref{transformula}, and consider the path $\Gamma^{\omega_{n}}_{\func{F{_\ast }}^n m}$ associated with this disintegration. By Proposition \ref{niceformulaab}, we have

	\begin{equation}
	\Gamma^{\omega_{n}}_{\func{F{_\ast }} ^n m} (\gamma)  =\sum_{i=1}^{q}{\dfrac{\func{F^n%
				_{f_{i}^{-n}(\gamma )}}_{\ast  }\nu}{|\det Df^n_{i}\circ f_{i}^{-n}(\gamma ))|}\chi _{f^n_i(P _{i})}(\gamma )}\ \ m_1-\hbox{a.e} \ \ \gamma \in M,  \label{niceformulaaw}
	\end{equation}where $P_i$, $i=1, \cdots, q=q(n)$, ranges over the partition $\mathcal{P}^{(n)}$ defined in the following way: for all $n \geq 1$, let $\mathcal{P}^{(n)}$ be the partition of $I$ s.t. $\mathcal{P}^{(n)}(x) = \mathcal{P}^{(n)}(y)$ if and only if $\mathcal{P}^{(1)}(f^j (x)) = \mathcal{P}^{(1)}(f^j(y))$ for all $j = 0, \cdots , n-1$, where $\mathcal{P}^{(1)} = \mathcal{P}$ (see remark \ref{chkjg}). This path will be used in the proof of the next proposition.

\end{remark}

For the next result, we recall that, by Theorem \ref{belongss}, $F$ has a unique invariant measure $\mu _{0}\in S^{\infty }$. We will prove that $\mu_0$ has a regular disintegration in a way that $\mu _0 \in \mathcal{H}_\zeta$ (similar results are presented in \cite{BM}, \cite{GLu}, \cite{GP} and \cite{LiLu}, for others sort of systems). This will be used to prove exponential decay of correlations over the set of $\zeta$-H\"older functions.

\begin{proof}{of Theorem \ref{regg}}

	Consider the path $\Gamma^{\omega_n}_{\func{F{_\ast }^n}}m$, defined in Remark \ref{riirorpdf},  which represents the measure $\func{F{_\ast }}^nm$.

	
	According to Theorem \ref{belongss}, let $\mu _{0}\in S^\infty$ be the unique $F$-invariant probability measure in $S^\infty$. Consider the measure $m$, defined in Remark \ref{riirorpdf} and its iterates $\func{F{_\ast }}^n%
	(m)$. By Theorem \ref{spgapp}, these iterates converge to $\mu _{0}$
	in $\mathcal{L}^{\infty }$. It implies that the sequence $\{\Gamma_{\func{F{_\ast }}^n(m)} ^{\omega _n}\}_{n}$ converges $m_1$-a.e. to $\Gamma_{\mu _{0}}^\omega\in \Gamma_{\mu_0 }$ (in $\mathcal{SB}(K)$ with respect to the metric defined in definition \ref{wasserstein}),  where $\Gamma_{\mu _{0}}^\omega$ is a path given by the Rokhlin Disintegration
	Theorem and $\{\Gamma_{\func{F{_\ast }}^n(m)} ^{\omega_n}\}_{n}$ is given by equation (\ref{niceformulaaw}). It implies that $\{\Gamma_{\func{F{_\ast }}^n(m)} ^{\omega_n}\}_{n}$ converges pointwise to $\Gamma_{\mu _{0}}^\omega$ on a full measure set $\widehat{M}\subset M$. Let us denote $%
	\Gamma_{n}:=\Gamma^{\omega_n}_{\func{F{_\ast }}^n(m)}|_{%
		\widehat{M}}$ and $\Gamma:=\Gamma^\omega _{\mu _{0}}|_{\widehat{M}}$. Since $\{\Gamma_{n} \}_n $ converges pointwise to $\Gamma$, it holds $|\Gamma_{n}|_\zeta \longrightarrow |\Gamma|_\zeta$ as $n \rightarrow \infty$. Indeed, let $x,y \in \widehat{M}$. Then,
	
	\begin{eqnarray*}
		\lim _{n \longrightarrow \infty} {\dfrac{||\Gamma_n (x) - \Gamma _n(y)||_W}{d_1(x,y)^\zeta}} &= & \dfrac{||\Gamma (x) - \Gamma (y)||_W}{d_1(x,y)^\zeta}.
	\end{eqnarray*} On the other hand, by Corollary \ref{kjdfhkkhfdjfh}, the argument of the left hand side is bounded by $|\Gamma_n|_\zeta \leq  \dfrac{D}{1-\beta}$  for all $n\geq 1$. Then, 
	\begin{eqnarray*}
		\dfrac{||\Gamma (x) - \Gamma (y)||_W}{d_1(x,y)^\zeta}&\leq & \dfrac{D}{1-\beta}.
	\end{eqnarray*} Thus, $|\Gamma^\omega_{\mu _0}|_\zeta \leq\dfrac{D}{1-\beta}$ and taking the infimum we get $|\mu _0|_\zeta \leq\dfrac{D}{1-\beta}$.

\end{proof}

\section{From a Space of Measures to a Space of Functions\label{last123}}

In Section \ref{decay}, we proved that systems $F:\Sigma \longrightarrow \Sigma$ satisfying (f1), (f2), (f3), (G1), (G2) and $(\alpha \cdot L)^\zeta<1$ have exponential decay of correlations for observables $f \in \Theta _{\mu _0} ^1$ and H\"older ones. In this section, we prove that $\Theta _{\mu _0} ^1$ contains the set of $\zeta$-H\"older functions, Theorem \ref{disisisii}.

Denote the space of the $\zeta$-H\"older functions, $g:\Sigma\longrightarrow \mathbb{R}$, by $\ho_\zeta(\Sigma)$.
As a consequence of Theorem \ref{regg}, next Theorem \ref{disisisii} yields $\ho_\zeta(\Sigma) \subset \Theta _{\mu_0} $ (defined in subsection \ref{decay}). In order to prove it, we need the next Lemma \ref{hdgfghddsfg} on disintegration of absolutely continuous measures with respect to a measure $\mu_0 \in \mathcal{AB}$, the proof of which is postponed to the Appendix 1 (section \ref{disint}).

\begin{lemma}\label{hdgfghddsfg}
	Let $(\{\mu_{0, \gamma}\}_\gamma, \phi_x)$ be the disintegration of $\mu _0$ along the partition $\mathcal{F}^s:=\{\{\gamma\} \times K: \gamma \in M\}$, and for a $\mu_0$ integrable function $h:M \times K \longrightarrow \mathbb{R}$, denote by $\nu$ the measure $\nu:=h\mu_0$ ($ h\mu_0(E) := \int _E {h}d\mu _0$). If $(\{\nu_{ \gamma}\}_\gamma, \widehat{\nu} )$ is the disintegration of $\nu$, where $\widehat{\nu}:=\pi_x{_*} \nu$, then $\widehat{\nu} \ll m_1$ and $\nu _\gamma \ll \mu_{0, \gamma}$. Moreover,
	\begin{equation}\label{fjgh}
	\dfrac{d\widehat{\nu}}{dm_1}:=\overline{h}(\gamma)=\int_{M}{h(\gamma, y)}d(\mu_0|_\gamma),
	\end{equation} and for $\widehat{\nu}$-a.e. $\gamma \in M$ and all $y\in K$,
	
\begin{equation}\label{gdfgdgf}
	\dfrac{d\nu _{ \gamma}}{d\mu _{0, \gamma}}(y) =
	\begin{cases}
		\dfrac{h|_\gamma (y)}{\int{h|_\gamma(y)}d\mu_{0,\gamma}(y)} ,&\text{if } \gamma \in B ^c \\
		& \\
		\qquad \qquad 0,&\text{if } \gamma \in B,
	\end{cases}
\end{equation}
	where $B :=  \overline{h} ^{-1}(0)$.
\end{lemma}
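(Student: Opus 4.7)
The plan is to establish both claims by explicitly constructing a candidate disintegration of $\nu=h\mu_0$ and then invoking the uniqueness clause of Rokhlin's theorem (Theorem \ref{rok}(d)). Throughout I use that $\pi_{x*}\mu_0=m_1$ (so $\phi_x\equiv 1$ $m_1$-a.e.) by Theorem \ref{belongss}, so the restriction $\mu_0|_\gamma$ from Definition \ref{restrictionmeasure} coincides with $\pi_{\gamma,y*}\mu_{0,\gamma}$.

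Step 1 (marginal density). For any Borel $A\subset M$, every stable leaf $\gamma=\{\gamma\}\times K$ is a fiber of $\pi_x$, so $\chi_{\pi_x^{-1}(A)}$ is constant on each leaf and equals $\chi_A(\gamma)$. Applying item (c) of Theorem \ref{rok} for $\mu_0$,
\[
\widehat{\nu}(A)=\nu(\pi_x^{-1}(A))=\int_{\pi_x^{-1}(A)}h\,d\mu_0=\int_A\Bigl(\int h|_\gamma\,d\mu_{0,\gamma}\Bigr)dm_1(\gamma)=\int_A\overline{h}(\gamma)\,dm_1(\gamma),
\]
which proves $\widehat{\nu}\ll m_1$ and gives formula \eqref{fjgh}. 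Measurability of $\overline{h}$ is a consequence of Fubini combined with item (b) of Theorem \ref{rok} applied to $\mu_0$.

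Step 2 (conditional densities via uniqueness). I define a candidate family of probability measures on the leaves by setting, for $\gamma\in B^c$,
\[
d\widetilde{\nu}_\gamma:=\frac{h|_\gamma}{\overline{h}(\gamma)}\,d\mu_{0,\gamma},
\]
and $\widetilde{\nu}_\gamma$ arbitrarily (e.g.\ zero) on $B=\overline{h}^{-1}(0)$. By construction $\widetilde{\nu}_\gamma$ is a probability on $\gamma$ for $\gamma\in B^c$. I would then verify items (a)--(c) of Theorem \ref{rok} for the pair $(\{\widetilde{\nu}_\gamma\}_\gamma,\widehat{\nu})$: (a) is immediate, (b) reduces to the joint measurability of $(\gamma,y)\mapsto h|_\gamma(y)/\overline{h}(\gamma)$, and (c) is the key computation
\[
\int_M\widetilde{\nu}_\gamma(E)\,d\widehat{\nu}(\gamma)=\int_{B^c}\Bigl(\int_{E\cap\gamma}\frac{h|_\gamma}{\overline{h}(\gamma)}\,d\mu_{0,\gamma}\Bigr)\overline{h}(\gamma)\,dm_1(\gamma)=\int_E h\,d\mu_0=\nu(E),
\]
where the last equality uses the disintegration of $\mu_0$ once more. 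Since $\Sigma=M\times K$ is Polish and its Borel $\sigma$-algebra is countably generated, Theorem \ref{rok}(d) applies and forces $\widetilde{\nu}_\gamma=\nu_\gamma$ for $\widehat{\nu}$-a.e.\ $\gamma$. This yields simultaneously $\nu_\gamma\ll\mu_{0,\gamma}$ and the explicit Radon--Nikodym derivative \eqref{gdfgdgf}.

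The main delicate point is handling the exceptional set $B$: the candidate density is only meaningful on $B^c$, but $\widehat{\nu}(B)=\int_B\overline{h}\,dm_1=0$, so the value assigned on $B$ does not affect the $\widehat{\nu}$-a.e.\ identification in the uniqueness step nor the integral computation in (c). A secondary technical check is the joint measurability of $(\gamma,y)\mapsto h|_\gamma(y)$ and of $\gamma\mapsto\overline{h}(\gamma)$, which follow routinely from the joint measurability of $h$ on $\Sigma$ and Theorem \ref{rok}(b) applied to $\mu_0$.
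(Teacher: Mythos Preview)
Your proposal is correct and follows essentially the same route as the paper: compute the marginal density $\overline{h}$ by integrating $h$ against the disintegration of $\mu_0$, define the candidate conditional densities $h|_\gamma/\overline{h}(\gamma)$ on $B^c$, verify the disintegration identity $\nu(E)=\int\widetilde{\nu}_\gamma(E)\,d\widehat{\nu}(\gamma)$, and conclude by the $\widehat{\nu}$-a.e.\ uniqueness in Rokhlin's theorem. Your use of $\phi_x\equiv 1$ (from Theorem \ref{belongss}) is a legitimate simplification of the paper's slightly more general bookkeeping, which carries $\phi_x$ through the computation; otherwise the two arguments are the same.
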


\begin{proof}{of Theorem \ref{disisisii}}
	Suppose that $h \in \ho_\zeta(\Sigma)$.	Let $(\{\mu_{0, \gamma}\}_\gamma, \phi_x)$ be the disintegration of $\mu _0$ and denote by $\nu$ the measure $\nu:=h\mu_0$ ($ h\mu_0(E) := \int _E {h}d\mu _0$). If $(\{\nu_{ \gamma}\}_\gamma, \widehat{\nu} )$ is the disintegration of $\nu$, then it holds $\widehat{\nu} \ll m_1$ and $\nu _\gamma \ll \mu_{0, \gamma}$ (see Lemma \ref{hdgfghddsfg}). Moreover, denoting $\overline{h}:=\dfrac{d\widehat{\nu}}{dm_1}$ as (\ref{fjgh}) and
	\begin{equation*}
		\dfrac{d\nu _\gamma}{d\mu_{0, \gamma}} (y) =
		\begin{cases}
			\dfrac{h(\gamma,y) }{\overline{h}(\gamma)} ,& \ \ \textnormal{if} \ \  	\overline{h}(\gamma) \neq 0 \\
			& \\
			\quad  0,&\ \ \textnormal{if} \ \  	\overline{h}(\gamma) = 0.
		\end{cases}
	\end{equation*}
	It is immediate that $\nu \in \mathcal{L}^\infty$. Let us check that $\overline{h} \in H_\zeta$ by estimating the H\"older constant of $\overline{h}$. 
	For $\gamma _1, \gamma _2 \in M$, we have
	\begin{eqnarray*}
		|\overline{h}(\gamma_2) - \overline{h}(\gamma_1)| &\leq& \left|\int_{M}{h(\gamma _2, y)}d(\mu_0|_{\gamma_2}) - \int_{M}{h(\gamma _{1}, y)}d(\mu_0|_{\gamma_{1}}) \right| 
		\\&\leq& \left|\int_{M}{h(\gamma _2, y)}d(\mu_0|_{\gamma_2}) - \int_{M}{h(\gamma _{2}, y)}d(\mu_0|_{\gamma_{1}}) \right| 
		\\&+& \left|\int_{M}{h(\gamma _2, y)}d(\mu_0|_{\gamma_{1}}) - \int_{M}{h(\gamma _{1}, y)}d(\mu_0|_{\gamma_{1}}) \right| 
		\\&\leq& \left|\int_{M}{h(\gamma _2, y)}d(\mu_0|_{\gamma_2}-\mu_0|_{\gamma_{1}}) \right| 
		\\&+& \left|\int_{M}{h(\gamma _2, y)-h(\gamma _{1}, y)}d(\mu_0|_{\gamma_{1}}) \right|
		\\&\leq& ||h||_{\zeta} ||\mu_0|_{\gamma_2}-\mu_0|_{\gamma_{1}}||_W + H_\zeta(h)d_1(\gamma _2, \gamma _{1}) ^\zeta \left|\phi _x \right|_{\infty}
		\\&\leq& ||h||_{\zeta} 	|\mu_0|_\zeta d_1(\gamma _2, \gamma _{1})^\zeta + H_\zeta(h)d_1(\gamma _2, \gamma _{1})^\zeta  \left|\phi _x \right|_{\infty}.
	\end{eqnarray*}Thus, $\overline{h} \in H_\zeta$.  
\end{proof}

\section{Appendix 1: On Disintegration of Measures}\label{disint}

In this section, we prove some results on disintegration of absolutely continuous measures with respect to a measure $\mu_0 \in \mathcal{AB}$. Precisely, we are going to prove Lemma \ref{hdgfghddsfg}. We warn the reader that in this section and the next we work with a slightly different notation.

Let us fix some notations. Denote by $(N_1,m_1)$ and $(N_2,m_2)$ the spaces defined in section \ref{sec2}. For a $\mu_0$-integrable function $f: N_1 \times N_2 \longrightarrow \mathbb{R}$ and a pair $(\gamma,y) \in N_1 \times N_2$ ($\gamma \in N_1$ and $y \in N_2$) we denote by $f_\gamma : N_2 \longrightarrow \mathbb{R}$, the function defined by $f_\gamma(y) = f(\gamma,y)$ and $f|_\gamma$ the restriction of $f$ on the set $\{\gamma\} \times N_2$. Then $f_\gamma = f|_\gamma \circ \pi _{y,\gamma}^{-1}$ and $f_\gamma \circ \pi_{y,\gamma} = f|_\gamma$, where $\pi _{y,\gamma}$ is restriction of the projection $\pi _y(\gamma,y):=y$ on the set $\{\gamma\} \times N_2$. When no confusion is possible, we will denote the leaf $\{\gamma\} \times N_2$, just by $\gamma$.

From now on, for a given positive measure $\mu \in \mathcal{AB}$, on $N_1 \times N_2$, $\widehat{\mu}$ stands for the measure $\pi_x {_*} \mu$, where $\pi_x$ is the projection on the first coordinate, $\pi_x(x,y)=x$.

For each measurable set $A \subset N_1$, define $g: N_1 \longrightarrow \mathbb{R}$ by $$g(\gamma)= \phi_x(\gamma)\int{\chi _{ \pi _x ^{-1}(A)}|_\gamma (y) f|_\gamma(y)}d\mu_{0,\gamma}(y)$$and note that

\begin{equation*}
g(\gamma)=
\begin{cases}
\phi_x(\gamma) \displaystyle{\int {f|_\gamma (y)}d \mu_{0,\gamma}},& \text{if } \gamma \in A \\
0,& \text{if } \gamma \notin A. 
\end{cases}
\end{equation*}Then, it holds $$g(\gamma) = \chi _A (\gamma) \displaystyle{\phi_x(\gamma)  \int {f|_\gamma (y)}d \mu_{0,\gamma}}.$$

\begin{proof}{(of Lemma \ref{hdgfghddsfg})}

	For each measurable set $A \subset N_1$, we have
	\begin{eqnarray*}
		\int_A{\dfrac{\pi _x ^* (f\mu_0)}{dm_1}}dm_1 &=&\int{\chi _A \circ \pi _x}d(f\mu_0)
		\\&=&\int{\chi _{ \pi _x ^{-1}(A)} f}d\mu_0
		\\&=&\int \left[\int{\chi _{ \pi _x ^{-1}(A)}|_\gamma (y) f|_\gamma(y)}d\mu_{0,\gamma}(y)\right]d(\phi_x m_1)(\gamma)
		\\&=&\int \left[\phi_x(\gamma)\int{\chi _{ \pi _x ^{-1}(A)}|_\gamma (y) f|_\gamma(y)}d\mu_{0,\gamma}(y)\right]d(m_1)(\gamma)
		\\&=&\int{g(\gamma)}d(m_1)(\gamma)
		\\&=&\int_A {\left[\int{f_\gamma(y)}d\mu_{0}{|_\gamma}(y)\right]}d(m_1)(\gamma).
	\end{eqnarray*}Thus, it holds 
	
	\begin{equation*}
	\dfrac{\pi _x {_*} (f\mu_0)}{dm_1} (\gamma) =  \int{f_\gamma(y)}d\mu_{0}{|_\gamma}, \ \hbox{for} \ m_1-\hbox{a.e.} \ \gamma \in N_1.
	\end{equation*}And by a straightforward computation 
	
	\begin{equation}\label{gh}
	\dfrac{\pi _x {_*} (f\mu_0)}{dm_1}(\gamma) = \phi_x (\gamma) \int{f|_\gamma(y)}d\mu_{0,\gamma}, \ \hbox{for} \ m_1-\hbox{a.e.} \ \gamma \in N_1.
	\end{equation}Thus, equation (\ref{fjgh}) is established.
	
	\begin{remark}\label{ghj}
		Setting
		\begin{equation}\label{tyu}
		\overline{f}:= \dfrac{\pi _x {_*} (f\mu_0)}{dm_1},
		\end{equation}we get, by equation (\ref{gh}), $\overline{f}(\gamma)=0$ iff $\phi_x (\gamma) = 0$ or $\displaystyle{\int{f|_\gamma(y)}d\mu_{0,\gamma} (y)=0}$, for  $m_1$-a.e. $\gamma \in N_1$. 
	\end{remark}

	Now, let us see that, by the $\widehat{\nu}$-uniqueness of the disintegration, equation (\ref{gdfgdgf}) holds. To do it, define, for $m_1$-a.e. $\gamma \in N_1$, the function $h_\gamma : N_2 \longrightarrow \mathbb{R}$, in a way that

	\begin{equation*}\label{jri}
	h_\gamma (y)=
	\begin{cases}
	\dfrac{f|_\gamma (y)}{\int{f|_\gamma(y)}d\mu_{0,\gamma}(y)},& \text{if } \gamma \in B ^c \\
	0,& \text{if } \gamma \in B. 
	\end{cases}
	\end{equation*}Let us prove equation (\ref{gdfgdgf}) by showing that, for all measurable set $E \subset N_1 \times N_2$, it holds $$f \mu _0 (E)  = \int _{N_1} {\int _{E \cap \gamma} {h_\gamma(y)}}d\mu _{0, \gamma} (y)d (\pi_x {_*}(f \mu_0))(\gamma).$$In fact, by equations (\ref{gh}), (\ref{tyu}), (\ref{jri}) and remark \ref{ghj}, we get
	
	\begin{align*}
		f\mu_0 (E)\hskip -2em &\qquad= \int _E {f} d\mu_0
		\\&= \int _{N_1} \int _{E\cap \gamma} {f|_\gamma} d\mu_{0, \gamma}d (\phi_x m_1)(\gamma)
		\\&= \int _{B^c} \int _{E\cap \gamma} {f|_\gamma} d\mu_{0, \gamma}d (\phi_x m_1)(\gamma)
		\\&= \int _{B^c} \int{f|_\gamma(y)}d\mu_{0,\gamma}(y)\phi_x (\gamma)\left[ \dfrac{1}{\int{f|_\gamma(y)}d\mu_{0,\gamma}(y)}\int _{E\cap \gamma} {f|_\gamma}  d\mu_{0, \gamma}\right]d m_1(\gamma)
		\\&= \int _{B^c} \overline{f}(\gamma)\left[ \dfrac{1}{\int{f|_\gamma(y)}d\mu_{0,\gamma}(y)}\int _{E\cap \gamma} {f|_\gamma}  d\mu_{0, \gamma}\right]d m_1(\gamma)
		\\&= \int _{B^c} \left[ \dfrac{1}{\int{f|_\gamma(y)}d\mu_{0,\gamma}(y)}\int _{E\cap \gamma} {f|_\gamma}  d\mu_{0, \gamma}\right]d \overline{f}m_1(\gamma)
		\\&= \int _{B^c} {\int _{E \cap \gamma} {h_\gamma(y)}}d\mu _{0, \gamma} (y)d (\pi_x {_*}(f \mu_0))(\gamma)
		\\&= \int _{N_1} {\int _{E \cap \gamma} {h_\gamma(y)}}d\mu _{0, \gamma} (y)d (\pi_x {_*}(f \mu_0))(\gamma).
	\end{align*}And we are done.
\end{proof}


\begin{thebibliography}{99}
	
	\bibitem{AS} [10.4064/fm224-3-2]
	\newblock  J. F. Alves and M. Soufi, 
	\newblock Statistical stability of geometric	Lorenz attractors,
	\newblock \emph{Fundamenta Mathematicae}, \textbf{224} (2014), 219--231.
	
	
	\bibitem{AGP} [10.1007/s00209-013-1231-0]
	\newblock  V. Araujo, S. Galatolo and M. Pacifico, 
	\newblock Decay of correlations for maps with uniformily contracting fibers and logarithm law for singular hyperbolic attractors,
	\newblock \emph{Mathematische Zeitschrift}, \textbf{276} (2012).
	
	\bibitem{AP} [10.1007/978-3-642-11414-4]
	\newblock V. Araujo and M. Pacifico,
	\newblock \emph{Three-dimensional flows},
	\newblock  Springer-Verlag, New York, 2010. 
	
	\bibitem{BR}[10.1017/etds.2018.28]
	\newblock W. Bahsoun and M. Ruziboev,
	\newblock On the statistical stability of Lorenz attractors with a $C^{1+\alpha}$ stable foliation,
	\newblock \emph{Ergodic Theory and Dynamical Systems}, \textbf{39} (2019), 3169-3184. 
	
	\bibitem{B}[10.1007/s10955-016-1663-0]
	\newblock  V. Baladi, 
	\newblock The quest for the ultimate anisotropic Banach space,
	\newblock \emph{Journal of Statistical Physics}, \textbf{166} (2016), 525--557.
	
	\bibitem{Ba} [10.1142/9789812813633]
	\newblock V. Baladi,
	\newblock \emph{Positive transfer operators and decay of
		correlations},
	\newblock  World Scientific Pub Co Inc, London, 2000. 
		
	\bibitem{BaG} [10.3934/jmd.2010.4.91]
	\newblock V. Baladi and S. Gou\"{e}zel, 
	\newblock Banach spaces for piecewise cone hyperbolic maps,
	\newblock \emph{Journal of Modern Dynamics}, \textbf{4} (2009), 91--137.
	
	\bibitem{BT}[10.5802/aif.2253]
	\newblock  V. Baladi and M. Tsujii, 
	\newblock Anisotropic H\"older and Sobolev
	spaces for hyperbolic diffeomorphisms,
	\newblock \emph{Annales de l'institut Fourier}, \textbf{57} (2007), 127--154.

	\bibitem{BaG2} [10.1016/j.anihpc.2009.01.001]
	\newblock V. Baladi and S. Gou\"{e}zel, 
	\newblock Good Banach spaces for piecewise hyperbolic maps via interpolation,
	\newblock \emph{Annales de l'Institut Henri Poincare (C) Non Linear Analysis}, \textbf{26} (2007), 1453--1481. 
	

	
	
	\bibitem{BG} [10.1007/978-1-4612-2024-4]
	\newblock A. Boyarsky and P. Gora,
	\newblock \emph{Laws of Chaos - Invariant Measures
		and Dynamical Systems in One Dimension},
	\newblock  Birkhauser, Boston, 1997. 
	
	\bibitem{BL} [10.3934/jmd.2007.1.301]
	\newblock O. Butterley and C. Liverani, 
	\newblock Smooth Anosov flows: correlation spectra and stability,
	\newblock \emph{J. Mod. Dyn.}, \textbf{1} (2007), 301--322. 
	
	\bibitem{BM}[10.1007/s11856-017-1477-z]
	\newblock O. Butterley and I. Melbourne,
	\newblock Disintegration of Invariant
	Measures for Hyperbolic Skew Products,
	\newblock \emph{Isr. J. Math}, {\bf 219} (2017), 171-188
	
	\bibitem{TC} [10.1017/etds.2015.86]
	\newblock A. Castro and T. Nascimento, 
	\newblock Statistical Properties of the maximal entropy measure for partially hyperbolic attractors,
	\newblock \emph{Ergodic Theory and Dynamical Systems}, \textbf{37} (2017), 1060--1101.


	\bibitem{VAC} [10.1016/j.anihpc.2012.07.004]
	\newblock A. Castro and P. Varandas, 
	\newblock Equilibrium states for non-uniformly expanding maps: Decay of correlations and strong stability,
	\newblock \emph{Annales de l'Institut Henri Poincare (C) Non Linear Analysis}, \textbf{30} (2013), 225--249.
	
	\bibitem{D} [10.1016/j.chaos.2018.08.028]
	\newblock M. Demers, 
	\newblock A gentle introduction to anisotropic banach spaces,
	\newblock \emph{Chaos, Solitons \& Fractals}, \textbf{116} (2018), 29--42. 
	
	\bibitem{DL} [10.1090/S0002-9947-08-04464-4]
	\newblock M. Demers and C Liverani, 
	\newblock Stability of Statistical Properties in Two-Dimensional Piecewise Hyperbolic Maps,
	\newblock \emph{Transactions of the American Mathematical Society}, \textbf{360} (2006), 4777--4814. 
	
	\bibitem{DZ} [10.3934/jmd.2011.5.665]
	\newblock M. Demers and HZ. Zhang, 
	\newblock Spectral analysis of the transfer
	operator for the Lorentz gas,
	\newblock \emph{Journal of Modern Dynamics}, \textbf{5} (2011), 665--709. 
	
	\bibitem{DZ2} [10.1007/s00220-013-1820-0]
	\newblock M. Demers and HZ. Zhang, 
	\newblock A functional Analytic approach to
	perturbations of the Lorentz gas,
	\newblock \emph{Communications in Mathematical Physics}, \textbf{324} (2013), 767--830. 
	
	\bibitem{G}
	\newblock  S. Galatolo,
	\newblock Statistical properties of dynamics. Introduction to the functional analytic approach. Lecture notes for the
	Hokkaido-Pisa University summer school 2015,
	\newblock preprint, arXiv{1510.02615}.
	
	\bibitem{Gjep} [10.5802/jep.73]
	\newblock S. Galatolo, 
	\newblock Quantitative statistical stability, speed of convergence to equilibrium for partially hyperbolic skew products,
	\newblock \emph{Journal de l'École polytechnique}, \textbf{5} (2018), 377--405. 
	
	\bibitem{GLu} [10.3934/dcds.2020079]
	\newblock  S. Galatolo and R. Lucena,
	\newblock Spectral Gap and quantitative statistical stability for systems with contracting fibers and Lorenz like maps,
	\newblock \emph{Discrete $\&$ Continuous Dynamical Systems - A}, \textbf{40 (3)} (2020), 1309-1360.
	
	\bibitem{GNS} [10.3934/jcd.2015.2.51]
	\newblock S. Galatolo,  I. Nisoli and B. Saussol, 
	\newblock An elementary way to rigorously estimate convergence to equilibrium and escape rates,
	\newblock \emph{Journal of Computational Dynamics}, \textbf{2} (2014), 51--64.
	
	\bibitem{GP} [10.1017/S0143385709000856]
	\newblock S. Galatolo and M. J. Pacifico, 
	\newblock Lorenz-like flows: Exponential decay of correlations for the Poincar\'e map, logarithm law, quantitative recurrence,
	\newblock \emph{Ergodic Theory and Dynamical Systems}, \textbf{30} (2010), 1703--1737.
	
	\bibitem{GL} [10.1017/S0143385705000374]
	\newblock S. Gouezel and C. Liverani, 
	\newblock Banach spaces adapted to Anosov systems,
	\newblock \emph{Ergodic Theory and Dynamical Systems}, \textbf{26} (2006), 189--217.
	
	\bibitem{IM} [10.2307/1969514]
	\newblock C. Ionescu-Tulcea and G. Marinescu, 
	\newblock Theorie ergodique pour des classes d' operateurs non completement continues,
	\newblock \emph{Annals of Mathematics}, \textbf{52} (1950), 140--147.
	
	\bibitem{Gk} [10.1007/BF00532744]
	\newblock G. Keller, 
	\newblock Generalized bounded variation and applications
	to piecewise monotonic transformations,
	\newblock \emph{Probability Theory and Related Fields}, \textbf{69} (1985), 461--478.
	
	
	
	
	
	
	
	
	\bibitem{KL} 
	\newblock G. Keller and C. Liverani, 
	\newblock Stability of the spectrum for transfer operators,
	\newblock \emph{Annali della Scuola Normale Superiore di Pisa. Classe di Scienze. Serie IV}, \textbf{28} (1999), 141--152.

		
	
	
	\bibitem{ben} [10.1017/etds.2020.22]
	\newblock B. R. Kloeckner, 
	\newblock Extensions with shrinking fibers,
	\newblock \emph{Ergodic Theory and Dynamical Systems}, (2020), 1-40.
	
	
	
	\bibitem{LY} [10.2307/1996575]
	\newblock A. Lasota and J.Yorke, 
	\newblock On the Existence of Invariant Measures for Piecewise Monotonic Transformations,
	\newblock \emph{Transactions of The American Mathematical Society}, \textbf{186} (1973), 481--488.



\bibitem{LiLu} 
\newblock D. Lima and R. Lucena, 
\newblock Lipschitz regularity of the invariant measure and statistical properties for a class of random dynamical systems, \url{https://arxiv.org/abs/2001.08265}.








	\bibitem{LV2} [10.1007/BF02183704]
	\newblock C. Liverani, 
	\newblock Decay of correlations for piecewise expanding maps,
	\newblock \emph{Journal of Statistical Physics}, \textbf{78} (1995), 1111--1129.
	
	\bibitem{L2} 
	\newblock C. Liverani, 
	\newblock Invariant measures and their properties. A functional analytic point of view,
	\newblock \emph{Scuola Normale Superiore in Pisa}, (2004).
	
	\bibitem{L3} [10.2307/2118636]
	\newblock C. Liverani, 
	\newblock Decay Of Correlations,
	\newblock \emph{Annals of Mathematics}, \textbf{142} (1997), 239--301.
	
	\bibitem{L}
	\newblock R. Lucena,
	\newblock  \emph{Spectral Gap for Contracting Fiber Systems and Applications},
	\newblock  Ph.D thesis, Universidade Federal do Rio de Janeiro in Brazil, 2015.
	
	\bibitem{Kva} 
	\newblock K. Oliveira and M. Viana,
	\newblock \emph{Fudamentos da Teoria Erg\'odica},
	\newblock  Colec\~ao Fronteiras da Matematica - SBM, Brazil, 2014. 
	
	\bibitem{RE} [10.1214/aop/1176993522]
	\newblock J. Rousseau-Egele, 
	\newblock Un Theoreme de la Limite Locale Pour une Classe de Transformations Dilatantes et Monotones par Morceaux, \newblock \emph{The Annals of Probability}, \textbf{11} (1983), 772--788.
	
	\bibitem{VM} [10.1016/j.anihpc.2009.10.002]
	\newblock P. Varandas and M. Viana, 
	\newblock Existence, uniqueness and stability of equilibrium states for non-uniformly expanding maps,
	\newblock \emph{Annales de l'Institut Henri Poincare (C) Non Linear Analysis}, \textbf{27} (2010), 555--593.

	\bibitem{V} 
	\newblock M. Viana, 
	\newblock \emph{Stochastic dynamics of deterministic systems},
	\newblock Brazillian Math. Colloquium 1997, IMPA,\url{http://w3.impa.br/~viana/out/sdds.pdf}	
	
\end{thebibliography}
\end{document}